\theoremstyle{plain}
\newtheorem{thm}{Theorem}[section]
\newtheorem{prop}[thm]{Proposition}
\newtheorem{lem}[thm]{Lemma}
\newtheorem{cor}[thm]{Corollary}
\newtheorem{assum}{Assumption}
\newtheorem{ex}{Example}
\newcommand{\Zdefn}{\mathcal{Z}_{n}}
\newcommand{\Xdefn}{\mathcal{X}_{n}}
\newcommand{\Xn}{X_{[n]}}
\newcommand{\Zn}{Z_{[n]}}
\newcommand{\zn}{z_{[n]}}
\newcommand{\taun}{\tau_{[n]}}
\renewcommand{\L}{\mathcal{L}}
\newcommand{\J}{\mathcal{J}}
\newcommand{\znh}{\widehat{z}_{[n]}}
\newcommand{\zh}{\widehat{z}}
\newcommand{\tah}{\widehat{\tau}}
\newcommand{\tahn}{\tah_{[n]}}
\newcommand{\znt}{\widetilde{z}_{[n]}}
\newcommand{\PXn}{P^{\Xn}}
\newcommand{\De}{\Delta(\pi,\znh,\znt)}
\newcommand{\Dp}{\Delta^+(\pi,\znh,\znt)}
\newcommand{\Dm}{\Delta^-(\pi,\znh,\znt)}
\renewcommand{\Dh}{\widehat D}
\newtheorem{postita}{Post-it}
\begin{document}
\sloppy


\begin{frontmatter}
\title{Consistency of maximum-likelihood and variational estimators in the Stochastic Block Model \thanksref{t2}}
\thankstext{t2}{Authors thank Catherine Matias and Mahendra Mariadassou for their helpful comments.}
\runtitle{MLE and variational estimators in SBM}


\author{\fnms{Alain} \snm{Celisse}\thanksref{t1}
\corref{c1}
\ead[label=e1]{celisse@math.univ-lille1.fr}}
\thankstext{t1}{This work has been supported by the French Agence Nationale de la Recherche
(ANR) under reference ANR-09-JCJC-0027-01}
\address{Laboratoire de Math\'ematique \\
UMR 8524 CNRS-- Universit\'e Lille 1\\
F-59\,655 Villeneuve d'Ascq Cedex, France \\
 \printead{e1}}

\and

\author{\fnms{Jean-Jacques} \snm{Daudin}\ead[label=e2]{daudin@agroparistech.fr}}
\address{UMR 518 AgroParisTech/INRA MIA\\
16 rue Claude Bernard\\
F-75\,231 Paris Cedex 5, France \\
\printead{e2}}

\and
\author{\fnms{Laurent} \snm{Pierre}\ead[label=e3]{laurent.pierre@u-paris10.fr}}
\address{Universit\'e Paris X\\
200 avenue de la R\'epublique\\ 
F- 92\,001 Nanterre Cedex France\\
\printead{e3}}

\runauthor{Celisse, Daudin, Pierre}

\begin{abstract}
The \textit{stochastic block model} (SBM) is a probabilistic model designed to describe heterogeneous \textit{directed} and \textit{undirected} graphs.
In this paper, we address the asymptotic inference in SBM by use of maximum-likelihood and variational approaches.
The identifiability of SBM is proved while asymptotic properties of maximum-likelihood and variational estimators are derived.
In particular, the consistency of these estimators is settled for the probability of an edge between two vertices (and for the group proportions at the price of an additional assumption), which is to the best of our knowledge the first result of this type for variational estimators in random graphs.
\end{abstract}

\begin{keyword}[class=AMS]
\kwd[Primary ]{62G05}
\kwd{62G20}
\kwd[; secondary ]{62E17}
\kwd{62H30}
\end{keyword}

\begin{keyword}
\kwd{Random graphs}
\kwd{Stochastic Block Model}
\kwd{maximum likelihood estimators}
\kwd{variational estimators}
\kwd{consistency}
\kwd{concentration inequalities}
\end{keyword}


\end{frontmatter}


\section{Introduction}

In the last decade, networks have arisen in numerous domains such as social sciences and biology.
They provide an attractive graphical representation  of complex data.
However, the increasing size of networks and their great number of connections have made it difficult to interpret
 {\it network representations of data} in a satisfactory way.
This has strengthened the need for statistical analysis of such networks, which could raise latent patterns in the data.

Interpreting networks as realizations of random graphs,
{\it unsupervised classification} (clustering) of the vertices of the graph has received much attention.
It is based on the idea that vertices with a similar connectivity can be gathered in the same class.
The initial graph can be replaced by a simpler one without loosing too much information.
This idea has been successfully applied to social  \citep{NS} and biological \citep{PMDCR} networks. It is out of the scope of the present work to review all of them.
%

Mixture models are a convenient and classical tool to perform unsupervised classification in usual statistical settings.
Mixture models for random graphs were first proposed by \citet{HLL} who defined the so-called {\it stochastic block} model (SBM),
in reference to an older {\it non stochastic block} model widely used in social science.
Assuming each vertex belongs to only one class, a {\it latent variable} (called the \textit{label}) assigns every vertex to its corresponding class.
SBM is a versatile means to infer underlying structures of the graph. Subsequently, several versions of SBM have been studied and it is necessary to formally distinguish between them.
Three binary distinctions can be made to this end:
\begin{enumerate}

\item The graph can be \textit{directed} or \textit{undirected}.

\item The graph can be \textit{binary} or \textit{weighted}.

\item The model can $(i)$ rely on \emph{latent random variables} (the labels), or $(ii)$ assume the labels are \emph{unknown parameters}:\\
$(i)$ SBM is a usual mixture model with random multinomial \textit{latent variables} \citep{NS,DPR,AM}.
In this model, vertices are sampled in a population and the concern is on the population parameters, that is the frequency of each class and their connectivity parameters.

$(ii)$ An alternative \emph{conditional} version of \emph{SBM} (called CSBM) has been studied \citep{BC}. 
In CSBM, former latent random variables (the labels) are considered as \textit{fixed parameters}. The main concerns are then the estimation of between- and within-class connectivity parameters as well as of the unknown label associated to every vertex \citep[see][]{RCY,CWA}.
\end{enumerate}
The present work deals with directed (and undirected) binary edges in random graphs drawn from SBM. 

The main interest of SBM is that it provides a \emph{more realistic and versatile model than the famous Erd\"{o}s-R\'enyi graph} while remaining easily interpretable.
However unlike usual statistical settings where independence is assumed, one specificity of SBM is that \emph{vertices are not independent}.
Numerous approaches have been developed to overcome this challenging problem, but most of them suffer some high computational cost.
For instance \citet{SN} study maximum-likelihood estimators of SBM with only two classes and  \textit{binary undirected} graphs, while \citet{NS} perform Gibbs sampling for more than two classes at the price of a large computational cost.
Other strategies also exist relying for instance on \emph{profile-likelihood optimization} in CSBM \citep{BC}, on a \emph{spectral clustering algorithm} in CSBM \citep{RCY}, or on \emph{moment estimation} in a particular instance of SBM called \emph{affiliation model} \citep[][and also Example~\ref{ex.affiliation.model} in the present paper]{AM}.

A \emph{variational approach} has been proposed by \citet{DPR} to remedy this computational burden. 
It can be used with \emph{binary directed} SBM and avoids the algorithmic complexity of the likelihood and Bayesian approaches (see \citet{Mix} and also \citet{MRV} for \textit{weighted undirected} SBM analyzed with a variational approach).
However even if its practical performance shows a great improvement,  \emph{variational approach remains poorly understood from a theoretical point of view}.
For instance, no consistency result does already exist for \textit{maximum likelihood} or \textit{variational} estimators of SBM parameters.
Note however that consistency results for maximum likelihood estimators in the CSBM have been derived recently by \citet{CWA} where the number of groups is allowed to grow with the number of vertices.  
Nonetheless, empirical clues \citep{GDR} have already supported the consistency of variational estimators in SBM.
Establishing such asymptotic properties is precisely the purpose of the present work.

\medskip

In this paper the identifiability of \emph{binary directed} SBM is proved under very mild assumptions for the first time to our knowledge.
Note that identifiability of directed SBM is really challenging
since existing strategies such as that of \cite{AMR} cannot be extended easily.
The \emph{asymptotics of maximum-likelihood} and \emph{variational estimators} is also addressed by use of concentration inequalities.
In particular, variational estimators are shown to be asymptotically equivalent to maximum-likelihood ones, and consistent for estimating the probability $\pi$ of an edge between two vertices. When estimating the group proportions $\alpha$, an additional assumption on the convergence rate of $\pih$ is required to derive consistency.
The present framework assumes the number $Q$ of classes to be known and independent of the number of vertices.
Some attempts exist to provide a data-driven choice of $Q$ \citep[see][]{DPR}, but this question is out of the scope of the present work.

The rest of the paper is organized as follows.
The main notation and assumptions are introduced in Section~\ref{sec.identifiability.SBM} where identifiability of SBM is settled.
Section~\ref{sec.MLE.SBM} is devoted to the consistency of the maximum-likelihood  estimators (MLE),
and Section~\ref{sec.variational} to the asymptotic equivalence between
variational and maximum-likelihood estimators.
In particular, the consistency of variational estimators (VE) is proved.
Some concluding remarks are provided in Section~\ref{sec.conclusion} with some further important questions.


\section{Model definition and identifiability}\label{sec.identifiability.SBM}

Let $\Omega=(\mathcal{V},\X)$ be the set of infinite random graphs
where $\mathcal{V}=\N$ denotes the set of countable vertices and
$\X=\acc{0,1}^{\N^2}$ the corresponding set of adjacency matrices.
The random adjacency matrix, denoted by $X=\acc{X_{i,j}}_{i,j \in \N}$, is given by: for $i \neq j$, $X_{i,j}=1$  if an edge exists from vertex $i$ to vertex $j$ and $X_{i,j}=0$ otherwise, and $X_{i,i}=0$ (no loop).
Let $\P$ denote a probability measure on $\Omega$.

\subsection{Stochastic Block Model (SBM)}\label{subsec.modelSBM}

Let us consider a random graph with $n$ vertices
$\acc{v_i}_{i=1,\ldots,n}$.
These vertices are assumed to be split into $Q$ classes
$\acc{C_q}_{q=1,\ldots,Q}$ depending on their structural
properties.

Set $\alpha = (\alpha_1,\ldots,\alpha_Q)$ with $0 < \alpha_q < 1$
and $\sum_q \alpha_q =1$.
For every $q$, $\alpha_q$ denotes the probability for a given
vertex to belong to the class $C_q$.
For any vertex $v_i$, its label $Z_i$ is generated as follows
\begin{align*}
     \acc{Z_i}_{1 \leq i \leq n} \  \stackrel{\iid}{\sim}  \mathcal{M}\paren{n; \alpha_1,\ldots,\alpha_Q}\enspace.
\end{align*}
where $\mathcal{M}\paren{n; \alpha_1,\ldots,\alpha_Q}$ denotes the
multinomial distribution.
Let $\Zn=\paren{Z_1,\ldots,Z_n}$ denote the random label vector of $(v_1,\ldots,v_n)$.

The observation consists of an adjacency matrix
$X_{[n]}=\acc{X_{i,j}}_{1\leq i,j\leq n}$, where $X_{i,i}=0$ for
every $i$ and
\begin{eqnarray*}
      X_{i,j} \mid Z_i=q, Z_j=l \   & \stackrel{\iid}{\sim} &    \mathcal{B}\paren{\pi_{q,l}},\quad \forall i\neq j\enspace,
\end{eqnarray*}
where $\mathcal{B}(\pi_{q,l})$ denotes the Bernoulli distribution
with parameter $0\leq \pi_{q,l}\leq 1$ for every $(q,l)$.

The log-likelihood is given by
\begin{align}\label{def.log-likelihood.SBM}
    \L_2(\Xn;\alpha,\pi) = \log\paren{ \sum_{\zn} e^{\L_1(\Xn;\zn,\pi)}\P\croch{\Zn=\zn} } \enspace,
\end{align}
where
\begin{align}\label{def.log-likelihood.L1}
    \L_1(\Xn;\zn,\pi) = \sum_{i\neq j} \acc{ X_{i,j}\log\pi_{z_i,z_j}+(1-X_{i,j}) \log(1-\pi_{z_i,z_j} ) } \enspace,
\end{align}
and $\P\croch{\Zn=\zn}=\P_{\alpha}\croch{\Zn=\zn}=\prod_{i=1}^n \alpha_{z_i}$.
In the following, let $\theta=(\alpha,\pi)$ denote the parameter
and $\theta^*=(\alpha^*,\pi^*)$ be the true parameter value.
Notice that the $X_{i,j}$s are not independent. However,
conditioning on $Z_i=q, Z_j=l$ yields independence.

Recall that the number $Q$ of classes is assumed to
be known and the purpose of the present work is to efficiently
estimate the parameters of SBM.

\subsection{Assumptions}

In the present section, several assumptions are discussed, which will be used all along the paper . 

\begin{assum}
For every $q\neq q'$, there exists $l \in \acc{1,\ldots,Q}$ such that
\begin{align}\label{assum.pi.block.diff} 
\pi_{q,l} \ne \pi_{q',l}\ \mathrm{or}\ \pi_{l,q} \ne \pi_{l,q'} \enspace. \tag{\bf A1}
\end{align}
\end{assum}
Following \eqref{assum.pi.block.diff}, the matrix $\pi$ cannot have two columns equal and the corresponding rows also equal.
This constraint is consistent with the goal of SBM which is to
define $Q$ classes $C_1,\ldots,C_Q$ with different structural
properties.
For instance, the connectivity properties of vertices in $C_q$
must be different from that of vertices in $C_l$ with $q\neq l$.
Therefore, settings where this assumption is violated correspond
to ill-specified models with too many classes.

\begin{assum}
There exists $\zeta>0$ such that
\begin{align}\label{assum.pi.trunc}
\forall (q,l) \in \acc{1,\ldots,Q}^2 ,\quad \pi_{q,l}\in]0,1[\quad \Rightarrow\quad \pi_{q,l}\in[\zeta,1-\zeta]\enspace. \tag{\bf A2}
\end{align}
\end{assum}
SBM can deal with null probabilities of connection between
vertices. However, the use of $\log\pi_{q,l}$ implies a special
treatment for $\pi_{q,l}\in\acc{0,1}$.  
Note that all along the present paper, \eqref{assum.pi.trunc} is always assumed to hold
with $\zeta$ not depending on $n$.

\begin{assum}
There exists $0<\gamma <1/Q$ such that
\begin{align}\label{assum.alpha.trunc}
    \forall q \in \acc{1,\ldots,Q},\quad \alpha_q\in [\gamma,1-\gamma]\enspace. \tag{\bf A3}
\end{align}

\end{assum}
This assumption implies that no class is drained. 
Actually the identifiability of SBM (Theorem~\ref{thm.identifiability1}) requires every $ \alpha_q\in(0,1)$ for $q\in\acc{1,\ldots,Q}$, which is implied by \eqref{assum.alpha.trunc}. 
In this paper, it is assumed that $\gamma$ does not
depend on $n$.

\begin{assum}
There exist $0<\gamma <1/Q$ and $n_0\in\N^*$ such that any realization of SBM (Section~\ref{subsec.modelSBM}) with label vector $\zn^*=(z^*_1,\ldots,z^*_n)$ satisfies
\begin{align}\label{assum.alpha.empirique.trunc}
    \forall q \in \acc{1,\ldots,Q}, \forall n\geq n_0,\quad \frac{N_q(\zn^*)}{n} \geq \gamma \enspace, \tag{\bf A4}
\end{align}
where $N_q(\zn^*) = \abs{\acc{1\leq i \leq n \mid z^*_i=q}}$.
\end{assum}
Note that \eqref{assum.alpha.empirique.trunc} is the empirical version of \eqref{assum.alpha.trunc}. 
By definition of SBM, $\zn^*$ is the realization of a multinomial random variable with parameters $(\alpha_1,\ldots,\alpha_Q)$.
Any multinomial random variable will satisfy
the requirement of \eqref{assum.alpha.empirique.trunc} with high probability.
This assumption will be used in particular in Theorem~\ref{thm.distrib.conv.zn}.

\subsection{Identifiability}\label{sec.identifiability}

The identifiability of the parameters in SBM have been first
obtained by \citet{AMR} for {\it undirected} graphs ($\pi$ is
symmetric): if $Q=2$, $n \ge 16$, and the coefficients of $\pi$
are all different, the parameters are identifiable up to label
switching.
\citet{AMR1} also established that for $Q>2$, if $n$ is even and
$n \geq (Q-1+ \frac{(Q+2)^2}{4})^2$ (with a similar condition if
$n$ is odd), the parameters of SBM are generically identifiable,
that is identifiable except on a set with null Lebesgue measure.

First generic identifiability (up to label
switching) of the SBM parameters is proved for {\emph directed} (or
undirected) graphs as long as $n\geq 2Q$.

\begin{thm}\label{thm.identifiability1}
Let $n\geq 2Q$ and assume that for any $1\leq q \leq Q$, $\alpha_q
>0$ and the coordinates of $r=\pi . \alpha$ are distinct.
Then, SBM parameters are identifiable.

\end{thm}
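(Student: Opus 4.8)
The plan is to recover the parameters $(\alpha,\pi)$ from the distribution of the adjacency matrix $\Xn$ by looking at the law of out-degrees (and the joint law of a few coordinates) and inverting a Vandermonde-type system. First I would reduce the problem: since the labels $Z_i$ are i.i.d.\ multinomial and, given the labels, the entries $X_{i,j}$ are independent Bernoulli, the whole law of $\Xn$ is determined by $(\alpha,\pi)$; conversely I must show that two parameter values yielding the same law of $\Xn$ agree up to a permutation of $\{1,\dots,Q\}$. Fix a vertex, say vertex $1$, and consider the random variable $r_{Z_1} := \sum_{l} \pi_{Z_1,l}\alpha_l = \E[X_{1,2}\mid Z_1]$, i.e.\ the conditional probability that vertex $1$ points to a fresh vertex. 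Its law is $\sum_q \alpha_q \delta_{r_q}$, where $r=\pi\cdot\alpha$; by hypothesis the $r_q$ are distinct, so this is a mixture of $Q$ distinct point masses.

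The key step is to argue that the distribution of $\Xn$ determines this mixture together with enough conditional information to pin down $\pi$. Concretely, for a fixed $i$ the out-neighbourhood indicators $(X_{i,j})_{j\neq i}$ are, conditionally on $Z_i=q$, i.i.d.\ $\mathcal{B}(r_q)$-like only on average — more precisely, conditionally on the \emph{full} label vector they are independent Bernoulli with parameters $\pi_{q,z_j}$. I would instead use the following cleaner route: the number $S_i^{(n)} = \sum_{j=2}^{n+1} X_{i,j}$ of out-edges from vertex $i$ among $n$ other fresh vertices satisfies $S_i^{(n)}/n \to r_{Z_i}$ a.s.\ (law of large numbers conditionally on $Z_i$, since the $Z_j$ are i.i.d.\ and then the $X_{i,j}$ are conditionally independent with mean $r_{Z_i}$). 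Hence from the law of $\Xn$ for all $n$ we recover the law of $r_{Z_1}$, namely the numbers $r_1,\dots,r_Q$ (as an unordered set) and the weights $\alpha_1,\dots,\alpha_Q$ attached to them. Relabel so that $r_1<\dots<r_Q$; this fixes the labelling up to nothing, i.e.\ uniquely, which is where the "up to label switching" is resolved by the distinctness hypothesis. Then $\alpha$ is identified. To get $\pi$, condition on the event $\{r_{Z_1}=r_q\}$, i.e.\ $\{Z_1=q\}$ (recoverable asymptotically from $S_1^{(n)}/n$), and similarly condition on $\{Z_2 = l\}$ via vertex $2$'s out-degree to fresh vertices $3,4,\dots$; then $\E[X_{1,2}\mid Z_1=q,Z_2=l] = \pi_{q,l}$, and this conditional expectation is a function of the law of $\Xn$. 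This recovers every entry $\pi_{q,l}$.

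The main obstacle, and the point needing care, is legitimising the conditioning on the latent events $\{Z_i=q\}$ purely in terms of the observable law of $\Xn$: one cannot observe $Z_i$ directly. The clean fix is to avoid conditioning altogether and instead write explicit moment identities. For instance, $\E[X_{1,2}] = \sum_{q,l}\alpha_q\alpha_l\pi_{q,l} = \transpose{\alpha}\pi\alpha$, and more usefully $\E\big[X_{1,2}\,\mathbf 1\{S_1^{(n)}/n \approx r_q\}\,\mathbf 1\{S_2^{(n)}/n\approx r_l\}\big] \to \alpha_q\alpha_l\pi_{q,l}$ as $n\to\infty$, using the a.s.\ convergence $S_i^{(n)}/n\to r_{Z_i}$ and bounded convergence; since the $r_q$ are distinct the indicator windows eventually separate the classes. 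Dividing by $\alpha_q\alpha_l$ (nonzero since each $\alpha_q>0$) yields $\pi_{q,l}$. Thus every $\pi_{q,l}$ and every $\alpha_q$ is a measurable functional of the law of $\Xn$, which is exactly identifiability up to the relabelling that orders $r_1<\dots<r_Q$; equivalently, two parameter vectors with the same law of $\Xn$ differ by a permutation. I would close by noting the hypothesis $n\geq 2Q$ is what guarantees we have enough fresh vertices to run these limiting arguments with $Q$ distinct classes represented — in the finite-$n$ statement one truncates the limits at a fixed large $n$, and $2Q$ is the minimal size for which the Vandermonde system built from powers of the $r_q$ (appearing when one expands the moments $\E[(S_1^{(n)}/n)^k]$) has full rank $Q$.
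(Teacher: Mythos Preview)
Your asymptotic argument proves identifiability from the law of the \emph{infinite} graph, not from the law of $\Xn$ for a single fixed $n\ge 2Q$, which is what the theorem asserts. The limits $S_i^{(n)}/n\to r_{Z_i}$ and the indicator-window argument $\E[X_{1,2}\,\1_{\{S_1^{(n)}/n\approx r_q\}}\1_{\{S_2^{(n)}/n\approx r_l\}}]\to \alpha_q\alpha_l\pi_{q,l}$ both require $n\to\infty$; for one fixed $n$ they give nothing. Your closing sentence does not repair this: the moments $\E[(S_1^{(n)}/n)^k]$ for finite $n$ are \emph{not} equal to $\sum_q\alpha_q r_q^k$ (diagonal terms such as $\E[X_{1,2}^2X_{1,3}]$ contaminate them), so the Vandermonde system you allude to is not the one that actually arises, and the role of the threshold $2Q$ remains unexplained.

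The paper's proof avoids limits entirely by using exact \emph{multilinear} moments. The key observation---which you almost state---is that, conditionally on $Z_1=q$, the variables $X_{1,2},\dots,X_{1,n}$ are i.i.d.\ $\mathcal B(r_q)$ (integrate out the i.i.d.\ $Z_j$'s). Hence for every $0\le i\le 2Q-1$,
\[
u_i\;=\;\P[X_{1,2}=\cdots=X_{1,i+1}=1]\;=\;\sum_{q=1}^Q \alpha_q\,r_q^i
\]
holds \emph{exactly}, and is computable from the law of $\Xn$ as soon as $n\ge 2Q$ (this is precisely where the hypothesis $n\ge 2Q$ enters, since $X_{1,1}=0$). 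From the $2Q$ numbers $u_0,\dots,u_{2Q-1}$ one recovers $(\{r_q\},\{\alpha_q\})$ by the classical Prony/Hankel trick: the Hankel array $M_{i,j}=u_{i+j}$ factors as $R\,\mathrm{Diag}(\alpha)\,R^{\,t}$ with $R$ Vandermonde in the $r_q$; the $r_q$ are the roots of the degree-$Q$ polynomial whose coefficients are signed minors of $M$, and then $\alpha$ follows by inversion. For $\pi$ the paper uses a second exact family $U_{i,j}=\P[\text{row }1\text{ starts with }i{+}1\text{ ones},\ \text{row }2\text{ ends with }j\text{ ones}]=(R\,\mathrm{Diag}(\alpha)\,\pi\,\mathrm{Diag}(\alpha)\,R^{\,t})_{i,j}$, again valid for $n\ge 2Q$, and inverts. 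Your moral---exploit the conditional i.i.d.\ structure and a Vandermonde---is correct, but the execution must go through these finite, exact identities rather than a law of large numbers.
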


The assumption on vector $\pi . \alpha$ is not strongly restrictive since the set of
vectors violating this assumption is of Lebesgue measure 0. Therefore,
Theorem~\ref{thm.identifiability1} actually asserts the
generic identifiability of SBM \citep[see][]{AMR}.
Moreover, Theorem~\ref{thm.identifiability1} also holds with $r'=\pi^{\,t}.\alpha$ (instead of $r=\pi. \alpha$), and
with vectors $r''$ given by $r_q''=\sum_l\pi_{q,l}\pi_{l,q}\alpha_l$ for
every $1\leq q\leq Q$.
Let us also emphasize that Assumption~\eqref{assum.pi.block.diff} is 
implied by assuming either $\pi.\alpha$ or $\pi^t.\alpha$ has distinct coordinates (which leads to identifiability).
Note that \citet[Theorem 2, Section~3.1]{2011_BCL} also recently derived an identifiability result for ``block models'' in terms of ``wheel motifs''.

Let us further illustrate the assumption on $\pi.\alpha$ through two examples.
The first one is a particular instance of SBM called \emph{Affiliation Model} \citep{AMR1} restricted to the setting where $Q=2$.
\begin{ex}[Affiliation model] 
\label{ex.affiliation.model}
From a general point of view, affiliation model is used with $Q$ populations of vertices and considers undirected graphs ($\pi$ symmetric).
The present example focuses on a particular instance where $Q=2$.
In this model, the matrix $\pi$ is only parametrized by two coefficients $\pi_1$ and $\pi_2$ ($\pi_1\neq \pi_2$), which respectively correspond to within-class and between-class connectivities between edges.
With $Q=2$, the matrix $\pi$ is given by 
\[ \pi =  
\paren{\begin{array}{cc}
  \pi_1 & \pi_2 \\
\pi_2 & \pi_1 
\end{array}} \enspace.
\]
Then, requiring $\paren{\pi\alpha}_1=\pi_1\alpha_1+\pi_2\alpha_2$  is not equal to $\paren{\pi\alpha}_2=\pi_2\alpha_1+\pi_1\alpha_2$ amounts to impose that $\alpha_1\neq \alpha_2$.
Indeed since within- and between-class connectivities are the same for the two classes, distinguishing between them therefore requires a different proportion of edges in these classes ($\alpha_1\neq \alpha_2$).

Note that  \citet{AMR1} have derived a result on identifiability for affiliation models with equal group proportions.
\end{ex}

The second example describes a more general setting than Example~\ref{ex.affiliation.model} in which the assumption on the coordinates of $r$ can be more deeply understood.
\begin{ex}[Permutation-invariant matrices]\label{ex.permutation.invariant}
For some matrices $\pi$, there exist permutations $\sigma:$ $\acc{1,\ldots,Q} \to \acc{1,\ldots,Q}$ such that 
$\pi$ remains unchanged if one permutes both its rows and 
columns according to $\sigma$.
More precisely, let $\pi^{\sigma}$ denote the matrix defined by 
\begin{align*}
  \pi^{\sigma}_{q,l} = \pi_{\sigma(q),\sigma(l)} \enspace,
\end{align*}
for every $1\leq q,l \leq Q$.
Then, $\pi^{\sigma}=\pi$.

For a given matrix $\pi$, let us define the set of permutations letting $\pi$ invariant by
\begin{align*}
\mathfrak{S}^{\pi} = \acc{ \sigma:\ \acc{1,\ldots,Q} \to \acc{1,\ldots,Q}\mid \pi^{\sigma}=\pi }\enspace.  
\end{align*}
The matrix \emph{$\pi$ is said permutation-invariant} if $\mathfrak{S}^{\pi}\neq \acc{Id}$, where $Id$ denotes the identity permutation.
For instance in the affiliation model (Example~\ref{ex.affiliation.model}), $\pi$ is permutation-invariant since $\mathfrak{S}^{\pi}$ is the whole set of permutations on $\acc{1,\ldots,Q}$.

Let us first notice that ``label-switching'' translates into the following property.
For any permutation $\sigma$ of $\acc{1,\ldots,Q}$, 
\begin{align}\label{eq.label.switching}
  \pi^{\sigma}\alpha^{\sigma} = \pi \alpha \enspace,
\end{align}
where $\alpha^{\sigma}_q = \alpha_{\sigma(q)}$ for every $q$.
The main point is that label-switching arises whatever the choice of $(\alpha,\pi)$, and for every $\sigma$.

By contrast, only permutation-invariant matrices satisfy the more specific following equality. 
For any permutation-invariant matrix $\pi$, let $\sigma^{\pi}\in \mathfrak{S}^{\pi}$ denote one permutation whose support is of maximum cardinality. (Such a permutation is not necessarily unique, for instance with the affiliation model.)
Then, 
\begin{align} \label{eq.permutation.invariant}
  (\pi\alpha)^{\sigma^{\pi}} = \pi \alpha \enspace.
\end{align}
Equation~\eqref{eq.permutation.invariant} amounts to impose equalities of the coordinates of $\pi\alpha$ in the support of $\sigma^{\pi}$. 
Let us recall that the support of $\sigma^{\pi}$ corresponds to rows and columns of $\pi$ that can be permuted without changing $\pi$.
Then, assuming all coordinates of $\pi\alpha$ distinct leads to impose that 
classes with the same connectivity properties have different respective proportions ($\alpha_q$) to be distinguished between one another.
\end{ex}

\smallskip

\begin{proof}[Proof of Theorem~\ref{thm.identifiability1}]
First, let $P_{[n]}$ denote the probability distribution function of the
adjacency matrix $\Xn$ of SBM.
Let us show that there exists a unique
$(\alpha,\pi)$  corresponding to $P_{[n]}$.

\medskip
Up to reordering, let $ r_1 < r_2 < \ldots < r_Q $ denote the
coordinates of the vector $r$ in the increasing order:
$r_q$ is equal to the probability of an edge from a given vertex
in the class $C_q$.

Let $R$ denote the Van der Monde matrix defined by
$R_{i,q}=r_q^i$, for $0\leq i<Q$ and $1\leq q \leq Q$.
$R$ is invertible since the coordinates of $r$ are all different.
For $i\geq1$, $R_{i,q}$ is the probability that $i$ given vertices
in $C_q$ have an edge.

Let us also define
\begin{align*}
u_i=\sum_{1\leq k\leq Q} \alpha_k r_k^i,\quad i=0,\ldots,2Q-1\enspace.
\end{align*}
For $i \geq 1$, $u_i$ denotes the probability that the first $i$
coefficients of the first row of $\Xn$ are equal to 1.
Note that $n\geq 2Q$ is a necessary requirement on $n$ since  $X_{i,i}=0$ by assumption.
Hence given $P_{[n]}$, $u_0=1$ and $u_1,\ldots,u_{2Q-1}$ are
known.

Furthermore, set $M$ the $(Q+1) \times Q$ matrix given by
$M_{i,j}=u_{i+j}$ for every $0\leq i\leq Q$ and $0\leq j<Q$, and
let $M_i$ denote the square matrix obtained by removing the row
$i$ from $M$.
The coefficients of $M_Q$ are
\begin{align*}
M_{i,j}=u_{i+j}=\sum_{1\leq k \leq Q } r_k^i \alpha_k
r_k^j\enspace,\quad \mathrm{with}\quad 0 \leq  i,j < Q\enspace.
\end{align*}
Defining the diagonal matrix $A=\mathrm{Diag}(\alpha)$, it comes
that $M_Q=RAR^{\,t}$, where $R$ and $A$ are invertible, but
unknown at this stage.
With $D_k=\det(M_k)$ and the polynomial $B(x)=\sum_{k=0}^Q
(-1)^{k+Q} D_k\, x^k$, it yields $D_Q=\det(M_Q)\neq0$ and the
degree of $B$ is equal to $Q$.

\medskip
Set $V_i=(1,r_i,\ldots,r_i^Q)^t$ and let us notice that $B(r_i)$
is the determinant of the square matrix produced when appending
$V_i$ as last column to $M$.
The $Q+1$ columns of this matrix are linearly dependent, since
they are all linear combinations of the $Q$ vectors $V_1$, $V_2$,
$\ldots$, $V_Q$.
Hence $B(r_i)=0$ and $r_i$ is a root of $B$ for every $1\leq i\leq
Q$.
This proves that $B=D_Q\prod_{i=1}^Q(x-r_i)$.
Then, one knows $r=(r_1,\ldots,r_Q)$ (as the roots of $B$ defined
from $M$) and $R$.
It results that $A= R^{-1} M_Q \paren{R^{\,t}}^{-1}$, which yields
a unique $(\alpha_1,\ldots,\alpha_Q)$.

\medskip

It only remains to determine $\pi$.
For $0 \leq i,j <Q $, let us introduce $U_{i,j}$ the probability that
the first row of
$\Xn$ begins with $i+1$ occurrences of 1, and the second row of $X$
ends up with $j$ occurrences of 1 ($i+1+j\leq n-1$ implies $n\geq 2Q$).

Then, $U_{i,j}=\sum_{k,l} r_k^i \alpha_k \pi_{k,l} \alpha_l
r_l^j$, for $0\leq i,j < Q$, and
the $Q \times Q$ matrix $U = RA\pi A R^{\,t}$.
The conclusion results from
$\pi = A^{-1}R^{-1}U {(R^{\,t})}^{-1}A^{-1}$.

\end{proof}

The assumption of Theorem~\ref{thm.identifiability1} on $r$ ($r'$ or $r''$),
leading to {\em generic identifiability}, can be further relaxed in the particular
case where $n = 4$ and $Q=2$.
\begin{thm}\label{thm.identifiability2}
Set $n = 4$, $Q=2$ and let us assume that $\alpha_q >0$ for every
$1\leq q \leq Q$, and the coefficients of $\pi$ are not all equal.
Then, SBM is identifiable.
\end{thm}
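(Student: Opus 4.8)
The plan is to dichotomize according to whether the generic hypothesis of Theorem~\ref{thm.identifiability1} --- or one of its two variants, with $r'=\pi^{\,t}\alpha$ or with $r''$ --- is available, and to treat directly the single configuration in which none of the three holds.

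First I would note that, for $n=4$, each of the properties ``$r$ has distinct coordinates'', ``$r'$ has distinct coordinates'', ``$r''$ has distinct coordinates'' is a function of $P_{[4]}$ alone. Indeed, conditioning on $Z_1$ gives $\P(X_{1,2}=1,X_{1,3}=1)=\sum_k\alpha_k r_k^2$ and $\P(X_{1,2}=1)=\sum_k\alpha_k r_k$, whence $\P(X_{1,2}=1,X_{1,3}=1)-\P(X_{1,2}=1)^2=\alpha_1\alpha_2(r_1-r_2)^2$, which (since $\alpha_1,\alpha_2>0$) vanishes iff $r_1=r_2$; the analogous identities hold for $r'$ with $\P(X_{2,1}=1,X_{3,1}=1)$, and for $r''$ with $\P(X_{1,2}=X_{2,1}=1)$ and $\P(X_{1,2}=X_{2,1}=X_{1,3}=X_{3,1}=1)$. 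Hence if two parameters $\theta$ and $\theta'$ induce the same $P_{[4]}$, they agree on which of $r$, $r'$, $r''$ has distinct coordinates. If at least one does --- say $r$, the cases of $r'$ and $r''$ being covered by the remark following the proof of Theorem~\ref{thm.identifiability1} --- I would simply run the reconstruction of that proof: for $n=4=2Q$ it uses only the $P_{[4]}$-probabilities $u_0,\dots,u_3$ and $U_{i,j}$ with $0\le i,j<Q$ (which is precisely where $n\ge 2Q$ is needed) and returns a parameter uniquely up to label switching; applied to $P_{[4]}=P_{[4]}^{\theta}=P_{[4]}^{\theta'}$ it returns both $\theta$ and $\theta'$, so $\theta$ and $\theta'$ coincide up to label switching.

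There remains the case where $r$, $r'$ and $r''$ all have equal coordinates. Write $\alpha=(a,1-a)$ with $0<a<1$ and $\pi_{1,1}=p$, $\pi_{1,2}=q$, $\pi_{2,1}=s$, $\pi_{2,2}=t$. Then $r_1=r_2$ reads $(p-s)a+(q-t)(1-a)=0$ and $r'_1=r'_2$ reads $(p-q)a+(s-t)(1-a)=0$; subtracting the second from the first gives $q=s$, so $\pi$ is symmetric. Now $r_1=r_2$ becomes $(p-q)a=(t-q)(1-a)$ and $r''_1=r''_2$ becomes $(p-q)(p+q)a=(t-q)(t+q)(1-a)$; since $p=q$ would force $t=q$ (using $a<1$), hence a constant $\pi$, which is excluded, one has $p\neq q$ and then $t\neq q$, so dividing the two relations gives $p+q=t+q$, that is $p=t$, and the first relation then gives $a=1/2$. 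Thus in this case $\pi$ is the affiliation matrix $\pi_{1,1}=\pi_{2,2}=p$, $\pi_{1,2}=\pi_{2,1}=q$ with $p\neq q$, and $\alpha=(1/2,1/2)$.

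To conclude I would check directly that such a parameter is recovered from $P_{[4]}$. Here $\alpha=(1/2,1/2)$ is forced, and from $m:=\P(X_{1,2}=1)=(p+q)/2$ and $w:=\P(X_{1,2}=X_{2,1}=1)=(p^2+q^2)/2$ one gets $p+q=2m$ and $pq=2m^2-w$, so the unordered pair $\acc{p,q}$ consists of the two roots of $X^2-2mX+(2m^2-w)$; the only remaining ambiguity is which of the two values lies on the diagonal of $\pi$, and it is resolved by the triangle probability $\P(X_{1,2}=X_{1,3}=X_{2,3}=1)$, which equals $(p^3+3pq^2)/4$ when the diagonal entry is $p$ and $(q^3+3p^2q)/4$ when it is $q$, these two values differing by $(p-q)^3/4\neq0$. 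Hence $(\alpha,\pi)$ is identified, and the proof is complete. The main obstacle is the step in the preceding paragraph: one must recognize that the simultaneous degeneracy of $r$, $r'$ and $r''$ forces exactly the affiliation model with equal group proportions --- the one case Theorem~\ref{thm.identifiability1} cannot reach --- and then find enough low-order statistics (the mutual-edge probability $w$ and the triangle probability) to pin it down.
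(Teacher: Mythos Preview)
Your proof is correct and follows essentially the same route as the paper's: reduce to the case where $r$, $r'$ and $r''$ are all degenerate, deduce the symmetric affiliation structure with $\alpha=(1/2,1/2)$, recover the unordered pair $\{\pi_{11},\pi_{12}\}$ from a quadratic, and break the remaining symmetry with a triangle probability. Your explicit verification that the degeneracy of $r$ (and of $r'$, $r''$) is itself a function of $P_{[4]}$ is a point the paper leaves implicit, and your triangle pattern $X_{1,2}=X_{1,3}=X_{2,3}=1$ differs cosmetically from the paper's directed $3$-cycle $X_{1,2}=X_{2,3}=X_{3,1}=1$, but since $\pi$ has already been shown symmetric at that stage the two coincide and yield the same discriminant $(p-q)^3/4$.
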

The proof of this result is deferred to Appendix~\ref{appendix.identif.2}.

Note that when $Q=2$, \eqref{assum.pi.block.diff} implies the coefficients of $\pi$ are not all equal.



\section{Maximum-likelihood estimation of SBM parameters}\label{sec.MLE.SBM}


\subsection{ Asymptotics of $\P \paren{ \Zn=\cdot \mid \Xn }$}

In this section we study the \textit{a posteriori} probability distribution function of $\Zn$, $\P\paren{\Zn=\cdot\mid \Xn}$,
which is a random variable depending on $\Xn$.

\subsubsection{Equivalence classes between label sequences}\label{sec.equiv.class}
Let us consider a realization of the SBM random graph generated with the sequence of true labels $Z=z^*$, where $z^*=\acc{z^*_i}_{ i \in \N^* }$.

Since a given matrix $\pi$ can be \emph{permutation-invariant} (see Example~\ref{ex.permutation.invariant} Section~\ref{sec.identifiability}), the mapping $z \mapsto \acc{\pi_{z_i,z_j}}_{i,j\in\N^*}$ can be non injective.
To remedy this problem, let us introduce an equivalence relation between two sequences of labels $z$ and $z'$:
\begin{align*}
  z \stackrel{\pi}{\sim} z' \quad \Leftrightarrow \quad \exists \sigma\in\mathfrak{S}^{^\pi} \mid \ z'_i=\sigma(z_i),\quad \forall i\in\N^*\enspace.
\end{align*}
Then $z \stackrel{\pi}{\sim} z'$ is equivalent to $\croch{z}_{\pi} = \croch{z'}_{\pi}$, where $\croch{z}_{\pi}$ denotes the equivalence class of $z$. 
As a consequence, any vectors $\zn$ and $\zn'$ in the same class have the same conditional likelihood \eqref{def.log-likelihood.L1}:
\begin{align*}
 \L_1(\Xn;\zn,\pi) =  \L_1(\Xn;\zn',\pi) \enspace.  
\end{align*}

From now on, square-brackets in the equivalence class notation will be removed to simplify the reading as long as no confusion can be made. In such cases, $z$ will be understood as the equivalence class of the label sequence.

\subsubsection{Main asymptotic result} \label{subsec.main.asymptotic.result}
Let $P^*\defegal\P\paren{\cdot\mid Z=z^*}=P_{\alpha^*,\pi^*}^*$ denote the \emph{true} conditional distribution given the (equivalence class of the) whole label sequence, the notation emphasizing that $P^*$ depends on $(\alpha^*,\pi^*)$.

The following Theorem~\ref{thm.distrib.conv.zn} provides the convergence rate of $\P\paren{\Zn=\zn^* \mid \Xn} = \P_{\alpha^*,\pi^*}\paren{\Zn=\zn^* \mid \Xn}$ towards 1 with respect to $P^*$, that is given  $Z=z^*$.
It is an important result that will be repeatedly used along the paper.

\begin{thm}\label{thm.distrib.conv.zn}
Let us assume that assumptions \eqref{assum.pi.block.diff}--\eqref{assum.alpha.empirique.trunc} hold. For every $t>0$,
\begin{align*}
P^*\croch{  \sum_{\zn\neq \zn^*}\frac{\P\paren{\Zn=\zn\mid \Xn}}{\P\paren{\Zn=\zn^*\mid \Xn}} > t } = \mathcal{O}\paren{n e^{-\kappa n}}\enspace,
\end{align*}
where $\kappa>0$ is a constant depending on $\pi^*$ but not on $z^*$, and the $\mathcal{O}\paren{n e^{-\kappa n}}$ is uniform with respect to $z^*$.

Furthermore, the same result holds with $P^*$ replaced by $\P$ under \eqref{assum.pi.block.diff}--\eqref{assum.alpha.trunc}.
\end{thm}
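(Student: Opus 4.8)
The plan is to control the ratio $\P(\Zn=\zn\mid\Xn)/\P(\Zn=\zn^*\mid\Xn)$ uniformly over $\zn\neq\zn^*$ by decomposing it into a likelihood factor and a prior factor. Using Bayes' formula,
\[
\frac{\P(\Zn=\zn\mid\Xn)}{\P(\Zn=\zn^*\mid\Xn)}
= \frac{\P_\alpha[\Zn=\zn]}{\P_\alpha[\Zn=\zn^*]}\, e^{\L_1(\Xn;\zn,\pi^*)-\L_1(\Xn;\zn^*,\pi^*)} .
\]
The prior ratio is $\prod_i \alpha^*_{z_i}/\alpha^*_{z^*_i}$, which under \eqref{assum.alpha.empirique.trunc} (or \eqref{assum.alpha.trunc} when conditioning is removed) is bounded by $(\,(1-\gamma)/\gamma\,)^{n}\le e^{cn}$ for some constant $c$ depending only on $\gamma$. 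So everything hinges on showing that, with $P^*$-probability $1-\mathcal O(ne^{-\kappa n})$, the exponent $\L_1(\Xn;\zn,\pi^*)-\L_1(\Xn;\zn^*,\pi^*)$ is $\le -\kappa' n$ simultaneously for all $\zn$ outside the equivalence class $[\zn^*]_{\pi^*}$, with $\kappa'>c$; then the full sum over the at most $Q^n$ competitors is $\sum_{\zn\neq\zn^*}e^{cn}e^{-\kappa'n}\le Q^n e^{(c-\kappa')n}$, which we arrange to be $\mathcal O(ne^{-\kappa n})$ by taking $\kappa'$ large enough (this is where the combinatorial factor $Q^n$ gets absorbed).

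The heart of the argument is therefore a concentration/large-deviations bound on the random variable $\L_1(\Xn;\zn^*,\pi^*)-\L_1(\Xn;\zn,\pi^*)$. Fix $\zn\notin[\zn^*]_{\pi^*}$. Conditionally on $Z=z^*$, the entries $X_{i,j}$ are independent $\mathcal B(\pi^*_{z^*_i,z^*_j})$, so the difference is a sum of independent bounded terms
\[
\sum_{i\neq j}\Bigl[ X_{i,j}\log\tfrac{\pi^*_{z^*_i,z^*_j}}{\pi^*_{z_i,z_j}} + (1-X_{i,j})\log\tfrac{1-\pi^*_{z^*_i,z^*_j}}{1-\pi^*_{z_i,z_j}}\Bigr].
\]
Its conditional expectation is $\sum_{i\neq j} \mathrm{KL}\bigl(\pi^*_{z^*_i,z^*_j}\,\|\,\pi^*_{z_i,z_j}\bigr)\ge 0$, and each summand is bounded in absolute value by a constant depending only on $\zeta$ (via \eqref{assum.pi.trunc}). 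A Hoeffding or Bernstein inequality then gives, for a suitable $\varepsilon>0$,
\[
P^*\Bigl[\L_1(\Xn;\zn,\pi^*)-\L_1(\Xn;\zn^*,\pi^*) > -\tfrac12\!\sum_{i\neq j}\mathrm{KL}(\cdot)\Bigr]\le e^{-\varepsilon \sum_{i\neq j}\mathrm{KL}(\cdot)/C},
\]
with $C=C(\zeta)$. So I must show the aggregate divergence $\sum_{i\neq j}\mathrm{KL}(\pi^*_{z^*_i,z^*_j}\|\pi^*_{z_i,z_j})$ is at least of order $n$ — in fact at least $c_0 n$ with $c_0$ uniform over all $\zn\notin[\zn^*]_{\pi^*}$ and over all admissible $z^*$ satisfying \eqref{assum.alpha.empirique.trunc}. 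This lower bound is exactly where assumptions \eqref{assum.pi.block.diff} and \eqref{assum.alpha.empirique.trunc} enter: \eqref{assum.pi.block.diff} ensures that for any $q\neq q'$ some row/column of $\pi^*$ separates them, so no "mislabelling" of a positive fraction of vertices can be cost-free once we quotient by $\mathfrak S^{\pi^*}$; and \eqref{assum.alpha.empirique.trunc} guarantees every true block $C_q$ contains at least $\gamma n$ vertices, giving enough "witnesses" $j$ to make the separation show up order-$n$ many times in the double sum. Quantitatively: if $\zn$ disagrees with every $\sigma\in\mathfrak S^{\pi^*}$ applied to $\zn^*$, then some class $C_q$ contains $\ge \gamma n/Q$ vertices $i$ with $z_i=q'$ for a fixed $q'$ with (say) $\pi^*_{q,l}\neq\pi^*_{q',l}$; pairing each such $i$ with the $\ge\gamma n$ vertices $j\in C_l$ yields $\gtrsim \gamma^2 n^2/Q$ index pairs each contributing a KL gap bounded below by a constant $\delta(\zeta,\pi^*)>0$. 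Hence $\sum_{i\neq j}\mathrm{KL}\ge c_0 n^2$, which is far more than enough.

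The main obstacle I anticipate is making the combinatorial lower bound on the aggregate KL divergence genuinely uniform over all $\zn$ in the huge set of competitors and all admissible true label vectors $z^*$ — i.e. turning the informal "some class is mislabelled on a positive fraction, pick a separating row" into a clean statement that survives the quotient by permutation-invariance and degrades gracefully when $\zn$ differs from $[\zn^*]_{\pi^*}$ on only a few coordinates. The delicate case is when $\zn$ is close to (but not in) $[\zn^*]_{\pi^*}$: there the expected KL gap may be as small as order $n$ (not $n^2$), so I need the concentration bound to still beat the $e^{cn}$ prior factor, which forces me to track constants carefully and to argue that even a single mislabelled vertex, combined with the $\gamma n$ witnesses supplied by \eqref{assum.alpha.empirique.trunc} and the separation from \eqref{assum.pi.block.diff}, costs at least a fixed constant times $n$ in divergence. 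Once this uniform lower bound $\sum_{i\neq j}\mathrm{KL}\ge c_0\,(\text{number of mislabelled vertices})\cdot n$ is in hand, a union bound over the $\le Q^n$ competitors stratified by the number of disagreements, together with the Hoeffding/Bernstein estimate, closes the argument and yields the $\mathcal O(ne^{-\kappa n})$ rate. The final sentence of the theorem — replacing $P^*$ by $\P$ under \eqref{assum.pi.block.diff}–\eqref{assum.alpha.trunc} — follows by integrating the conditional bound over $z^*$, noting that by \eqref{assum.alpha.trunc} a multinomial sample satisfies \eqref{assum.alpha.empirique.trunc} outside an event of probability $\mathcal O(e^{-\kappa'' n})$ (Chernoff bound for binomials), which is absorbed into the stated rate.
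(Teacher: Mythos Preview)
Your overall strategy --- Bayes decomposition, Hoeffding on the conditionally independent log-likelihood difference, a combinatorial lower bound of the form $\sum_{i\neq j}\mathrm{KL}(\pi^*_{z^*_i,z^*_j}\|\pi^*_{z_i,z_j})\ge c_0\,r\,n$ where $r=\norm{\zn-\zn^*}_0$, and a union bound stratified by $r$ --- is exactly the route the paper takes. The paper's Proposition~\ref{A2} is precisely the rigorous version of your key lemma $|D(\zn,\zn^*)|\ge (\gamma^2/2)\,n\,r$, and its proof does handle the permutation-invariant case you correctly flag as delicate.

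There is, however, one step in your write-up that would fail as stated. You bound the prior ratio $\prod_i \alpha^*_{z_i}/\alpha^*_{z^*_i}$ by $((1-\gamma)/\gamma)^n=e^{cn}$ and then say you will ``take $\kappa'$ large enough'' so that $Q^n e^{(c-\kappa')n}$ is small. But $\kappa'$ is not a free parameter: it is determined by $\pi^*$, $\zeta$, and $\gamma$, and there is no reason the resulting constant should exceed $c+\log Q$. Concretely, for a competitor $\zn$ with $r=1$ mislabel the expected log-likelihood gap is only $\Theta(n)$ (since $|D|\asymp \gamma^2 n$), which need not dominate an $e^{cn}$ prior factor when $\gamma$ is small. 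The fix is simple and is what the paper does: observe that only $r$ factors in the prior product differ from $1$, so the prior ratio is at most $((1-\gamma)/\gamma)^r$, i.e.\ $O(r)$ in the exponent. Once you normalize by $N_r\ge (\gamma^2/2)nr$, this prior contribution is $O(1/n)$ and becomes negligible next to the $\Theta(1)$ per-pair KL lower bound (this is the content of the paper's Lemma~\ref{lem.Hoeffding.Expectation}). With that correction, the stratified union bound gives $\sum_{r=1}^n\binom{n}{r}(Q-1)^r e^{-Cnr}=(1+(Q-1)e^{-Cn})^n-1=\mathcal O(ne^{-Cn})$, which is the stated rate. (A minor point: the prior bound uses \eqref{assum.alpha.trunc} on $\alpha^*$, not \eqref{assum.alpha.empirique.trunc}; the latter is what feeds the combinatorial lower bound on $|D|$.)
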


The proof of Theorem~\ref{thm.distrib.conv.zn} is deferred to Appendix~\ref{append.conv.loi.Z}.

A noticeable feature of this result is that the convergence rate does not depend on
$z^*$. This point turns out to be crucial when deriving consistency for the MLE and the variational estimator (respectively Section~\ref{sec.consistency.M1} and Section~\ref{subsubsec.uniform.asymptotics.J.L}).
Besides, the exponential bound of Theorem~\ref{thm.distrib.conv.zn} allows the use of  Borel-Cantelli's lemma to get the $\P-$almost sure convergence.
\begin{cor} \label{cor.distrib.conv.as.zn} With the same notation as Theorem
\ref{thm.distrib.conv.zn},
\begin{align*}
\sum_{\zn\neq \zn^*}\frac{\P\paren{\Zn=\zn\mid \Xn}}{\P\paren{\Zn=\zn^*\mid \Xn}}\xrightarrow[n\to+\infty]{}0\enspace,\qquad \P-a.s.\enspace.
\end{align*}
Moreover,
\begin{align*}
    \P\paren{\Zn=\zn^*\mid \Xn} \xrightarrow[n\to+\infty]{} 1 \enspace,\qquad \P-a.s.\enspace,
\end{align*}
and for every $\zn\neq \zn^*$,
\begin{align*}
    \P\paren{\Zn=\zn \mid \Xn} \xrightarrow[n\to+\infty]{} 0 \enspace,\qquad \P-a.s.\enspace.
\end{align*}
\end{cor}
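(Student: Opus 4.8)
The plan is to derive all three assertions from the exponential bound of Theorem~\ref{thm.distrib.conv.zn} by a direct application of the Borel--Cantelli lemma. Introduce the random variable
\begin{align*}
  S_n = \sum_{\zn\neq\zn^*}\frac{\P\paren{\Zn=\zn\mid\Xn}}{\P\paren{\Zn=\zn^*\mid\Xn}}\enspace,
\end{align*}
so that the three conclusions read $S_n\to0$, $\P\paren{\Zn=\zn^*\mid\Xn}\to1$, and $\P\paren{\Zn=\zn\mid\Xn}\to0$, all $\P-a.s.$. First I would fix $t>0$ and invoke the $\P$-version of Theorem~\ref{thm.distrib.conv.zn} (valid under \eqref{assum.pi.block.diff}--\eqref{assum.alpha.trunc}) to obtain $\P\croch{S_n>t}=\mathcal{O}\paren{n e^{-\kappa n}}$. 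Since $\kappa>0$ the series $\sum_{n\geq1}n e^{-\kappa n}$ converges, hence $\sum_{n\geq1}\P\croch{S_n>t}<+\infty$, and Borel--Cantelli gives $\limsup_n S_n\leq t$, $\P-a.s.$.

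Next I would let $t$ range over the countable family $\acc{1/k}_{k\in\N^*}$: on the intersection of the corresponding almost-sure events, which is still an event of probability $1$, one has $\limsup_n S_n\leq 1/k$ for every $k$, hence $S_n\to0$, $\P-a.s.$, which is the first claim. The second claim then follows from the identity $\paren{1+S_n}\,\P\paren{\Zn=\zn^*\mid\Xn}=1$, itself obtained by dividing $\sum_{\zn}\P\paren{\Zn=\zn\mid\Xn}=1$ (the sum running over the equivalence classes of Section~\ref{sec.equiv.class}) by $\P\paren{\Zn=\zn^*\mid\Xn}$; letting $S_n\to0$ yields $\P\paren{\Zn=\zn^*\mid\Xn}\to1$, $\P-a.s.$. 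For the third claim, for any fixed $\zn\neq\zn^*$ one bounds, using $\P\paren{\Zn=\zn^*\mid\Xn}\leq1$,
\begin{align*}
  \P\paren{\Zn=\zn\mid\Xn}\leq \frac{\P\paren{\Zn=\zn\mid\Xn}}{\P\paren{\Zn=\zn^*\mid\Xn}}\leq S_n\enspace,
\end{align*}
so $S_n\to0$ forces $\P\paren{\Zn=\zn\mid\Xn}\to0$, $\P-a.s.$ (in fact uniformly over $\zn\neq\zn^*$).

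There is no serious obstacle here: the argument is entirely routine once Theorem~\ref{thm.distrib.conv.zn} is in hand. The only points deserving a line of justification are the convergence of $\sum_{n}n e^{-\kappa n}$ --- which is exactly where $\kappa>0$ enters --- and the passage from the family of statements ``$\limsup_n S_n\leq t$ $\P-a.s.$ for each fixed $t>0$'' to the single statement ``$S_n\to0$ $\P-a.s.$'', handled by taking a sequence $t=1/k\downarrow0$ so that the countably many exceptional null sets may be unioned. One should also bear in mind that under $\P$ the label sequence $z^*$ is itself random, but this is already absorbed in the $\P$-version of Theorem~\ref{thm.distrib.conv.zn}, whose bound is uniform in $z^*$; the same reasoning applied to the $P^*$-version yields the analogous $P^*-a.s.$ statements for a fixed $z^*$.
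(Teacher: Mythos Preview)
Your proposal is correct and follows exactly the approach the paper indicates: the paper gives no detailed proof of this corollary, merely noting just before the statement that ``the exponential bound of Theorem~\ref{thm.distrib.conv.zn} allows the use of Borel-Cantelli's lemma to get the $\P$-almost sure convergence.'' Your argument is a correct and careful fleshing-out of precisely this hint, including the deduction of the second and third claims from the first via the elementary identities $(1+S_n)\,\P(\Zn=\zn^*\mid\Xn)=1$ and $\P(\Zn=\zn\mid\Xn)\leq S_n$.
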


As a consequence of previous Corollary~\ref{cor.distrib.conv.as.zn}, 
one can also understand the above phenomenon in terms of the conditional 
distribution of the equivalence class $\Zn$ given $\Xn$.
\begin{cor}\label{cor.degeneracy.distrib.zn}
\begin{align*}
    \mathcal{D}(\Zn\mid \Xn) \xrightarrow[n\to+\infty]{w} \delta_{z^*},\quad \P-a.s.\enspace,
\end{align*}
where $\mathcal{D}(\Zn\mid \Xn)$  denotes the distribution of
$\Zn$ given $\Xn$, $\xrightarrow[n\to+\infty]{w}$ refers to the
weak convergence in $\mathcal{M}_1\paren{\mathcal{Z}}$, the set of
probability measures on $\mathcal{E}\paren{\mathcal{Z}}$ the set of equivalence classes on $\mathcal{Z}=\acc{1,\ldots,Q}^{\N^*}$ and
$\delta_{z^*}$ is the Dirac measure at the equivalence class $z^*$.
\end{cor}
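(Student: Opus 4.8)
The statement follows from Corollary~\ref{cor.distrib.conv.as.zn} by a topological argument; the only probabilistic ingredient is the $\P$-almost sure convergence $\P(\Zn=\zn^*\mid\Xn)\to1$ proved there. First I would fix the topology: $\mathcal{Z}=\{1,\ldots,Q\}^{\N^*}$ is equipped with the product topology, which makes it compact and metrizable, and $\mathcal{E}(\mathcal{Z})$ with the quotient topology induced by the finite group $\mathfrak{S}^{\pi}$ acting coordinatewise; since this action is by homeomorphisms of a compact metric space, $\mathcal{E}(\mathcal{Z})$ is again compact metrizable. Next I would exhibit a countable neighbourhood basis of the class $z^*$ in $\mathcal{E}(\mathcal{Z})$, namely, for $k\geq1$,
\[
 U_k=\Bigl\{\,[z]_{\pi}\ \Big|\ \{(\sigma(z_1),\ldots,\sigma(z_k)):\sigma\in\mathfrak{S}^{\pi}\}=\{(\sigma(z^*_1),\ldots,\sigma(z^*_k)):\sigma\in\mathfrak{S}^{\pi}\}\,\Bigr\},
\]
the set of equivalence classes that agree with $z^*$ on the first $k$ coordinates modulo $\mathfrak{S}^{\pi}$. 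Each $U_k$ is clopen (it is the preimage of a point under the continuous projection of $\mathcal{E}(\mathcal{Z})$ onto the finite discrete space $\{1,\ldots,Q\}^k/\mathfrak{S}^{\pi}$), the $U_k$ decrease, and $\bigcap_k U_k=\{z^*\}$ because $\mathfrak{S}^{\pi}$ is finite; by compactness it follows that every open set containing $z^*$ contains some $U_k$, so $(U_k)_{k\geq1}$ is indeed a neighbourhood basis.

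Now I would relate $\mathcal{D}(\Zn\mid\Xn)$, which a priori is the law of the finite equivalence class $\Zn$, to these cylinders; this is the point that needs to be made precise, and it suffices to note that only finitely many coordinates are involved, so the identification of $\mathcal{D}(\Zn\mid\Xn)$ with an element of $\mathcal{M}_1(\mathcal{Z})$ (e.g.\ by appending the true tail $z^*_{n+1},z^*_{n+2},\ldots$) does not affect the mass of any $U_k$. Concretely, for $n\geq k$, the event $\{\Zn=\zn^*\}$ forces the existence of $\sigma\in\mathfrak{S}^{\pi}$ with $Z_i=\sigma(z^*_i)$ for all $i\leq n$, hence for all $i\leq k$, so that $[\Zn]_{\pi}\in U_k$; therefore
\[
 \mathcal{D}(\Zn\mid\Xn)(U_k)\ \geq\ \P(\Zn=\zn^*\mid\Xn)\qquad\text{for every }n\geq k.
\]
By Corollary~\ref{cor.distrib.conv.as.zn} the right-hand side converges to $1$, $\P$-almost surely, so for each fixed $k$ there is an event $E_k$ with $\P(E_k)=1$ on which $\mathcal{D}(\Zn\mid\Xn)(U_k)\xrightarrow[n\to+\infty]{}1$.

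Finally I would conclude with the Portmanteau theorem in the form adapted to a Dirac limit: in a metric space, $\mu_n\xrightarrow{w}\delta_{z^*}$ as soon as $\mu_n(U)\to1$ for every $U$ in a neighbourhood basis of $z^*$, since any open $V\ni z^*$ contains some $U_k$ and then $\liminf_n\mu_n(V)\geq\liminf_n\mu_n(U_k)=1$. On the probability-one event $E_\infty=\bigcap_{k\geq1}E_k$ one has $\mathcal{D}(\Zn\mid\Xn)(U_k)\to1$ for all $k$ simultaneously, whence $\mathcal{D}(\Zn\mid\Xn)\xrightarrow{w}\delta_{z^*}$ on $E_\infty$, which proves the corollary. (Equivalently, one may test against a continuous $f$ on the compact space $\mathcal{E}(\mathcal{Z})$: uniform continuity gives $k$ with $|f-f(z^*)|<\varepsilon$ on $U_k$, and then $|\int f\,d\mathcal{D}(\Zn\mid\Xn)-f(z^*)|\leq\varepsilon+2\|f\|_\infty(1-\mathcal{D}(\Zn\mid\Xn)(U_k))$.) The only delicate points are the bookkeeping ones already flagged: making precise the embedding of the finite-dimensional law $\mathcal{D}(\Zn\mid\Xn)$ into $\mathcal{M}_1(\mathcal{Z})$ --- resolved by restricting to cylinder events --- and exchanging ``for each $k$, $\P$-a.s.'' into ``$\P$-a.s., for each $k$'', which is legitimate because the basis $(U_k)_{k\geq1}$ is countable; everything else is immediate from Corollary~\ref{cor.distrib.conv.as.zn}.
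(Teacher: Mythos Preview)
Your proof is correct and follows essentially the same route as the paper: both arguments rely solely on Corollary~\ref{cor.distrib.conv.as.zn} and then reduce weak convergence to showing that the posterior mass of any neighbourhood of $z^*$ tends to~$1$. The paper works directly with bounded continuous test functions and balls $B_\eta^*=B(z^*,\eta)$ in an explicit metric $d(z,z')=\min_{u\in z,\,v\in z'}\sum_{k\geq1}2^{-k}\1_{(u_k\neq v_k)}$, bounding $|\E_n[h(\Zn)]-h(z^*)|\leq\varepsilon+2\|h\|_\infty\,\P^n((B_\eta^*)^c)$; your cylinder sets $U_k$ are exactly these balls for $\eta<2^{-k}$, and your parenthetical alternative at the end is precisely the paper's computation. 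Your version is more explicit about the quotient topology, the embedding of the finite-dimensional law into $\mathcal{M}_1(\mathcal{Z})$, and the countable intersection needed to pass from ``for each $k$, $\P$-a.s.'' to ``$\P$-a.s., for each $k$'', points the paper leaves implicit.
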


\smallskip

\begin{proof}[Proof of Corollary~\ref{cor.degeneracy.distrib.zn}]
For every $n\in\N^*$, let us define  $\Z_n=\acc{1,\ldots,Q}^n$ and $\mathcal{E} \paren{\Z_n}$ the corresponding set of equivalence classes.
Let us further introduce a metric space $(\mathcal{E}\paren{\Z_n},d_n)$, where the distance $d_n$ is given by
\begin{align*}
    \forall z,z'\in \mathcal{E}\paren{\Z_n},\quad d_n\paren{z,z'} = \min_{u\in z,\, v\in z'} \sum_{k=1}^n 2^{-k} \1_{\paren{u_k\neq v_k}}\enspace.
\end{align*}
Similarly for $\mathcal{Z}=\acc{1,\ldots,Q}^{\N^*}$, $(\mathcal{E}\paren{\mathcal{Z}},d)$ denotes a metric space with
\begin{align*}
    \forall z,z'\in \mathcal{E}\paren{\mathcal{Z}},\quad d\paren{z,z'} = \min_{u\in z,\, v\in z'}\sum_{k \geq 1} 2^{-k} \1_{\paren{u_k\neq v_k}}\enspace.
\end{align*}
Then, $\mathcal{E}\paren{\Z_n}$ can be embedded into $\mathcal{E}\paren{\mathcal{Z}}$, so that
$\mathcal{E}\paren{\Z_n}$ is identified to a subset of $\mathcal{E}\paren{\mathcal{Z}}$.

Let us introduce $\mathcal{B}$ the Borel $\sigma-$algebra on $\mathcal{E}\paren{\mathcal{Z}}$,
and $\mathcal{B}_n$ the $\sigma-$algebra induced by $\mathcal{B}$
on $\mathcal{E}\paren{\Z_n}$.
Let also $\P^n=\P\croch{\cdot\mid \Xn}$ denote a probability
measure on  $\mathcal{B}$, and $\E_n\croch{\cdot}$ is the
expectation with respect to $\P^n$.

Set $h\in C_b\paren{\Z}$ (continuous bounded functions on $\mathcal{E}\paren{\mathcal{Z}}$)
such that $\norm{h}_{\infty}\leq M$ for $M>0$.
By continuity at point $z^*$, for every $\epsilon>0$, there exists $\eta>0$ such that
\begin{align*}
d(z,z^*) \leq \eta \quad \Rightarrow \quad \abs{h(z^*)-h(z)} \leq
\epsilon \enspace.
\end{align*}
Then,
\begin{align*}
\abs{\E_n\croch{h\paren{\Zn}}-h(z^*)} \leq & \sum_{\zn}\abs{h\paren{\zn}-h(z^*)}\P^n\paren{\Zn=\zn} \\
 \leq & \ \epsilon + 2M \sum_{ \zn\in (B_{\eta}^*)^c} \P^n\paren{\Zn=\zn} \\
 \leq &\ \epsilon + o_{\P}(1) \quad \P-a.s.\enspace,
\end{align*}
by use of Corollary~\ref{cor.distrib.conv.as.zn}, where $B_{\eta}^*=B(z^*,\eta)$ denotes the ball in $\mathcal{E}\paren{\mathcal{Z}}$ with radius $\eta$ with respect to $d$.
In the last inequality, $o_{\P}(1)$ results from Corollary~\ref{cor.distrib.conv.as.zn}, which yields the result.

\end{proof}


\subsection{MLE consistency}
\label{sec.consistency.M1}
The main focus of this section is to settle the consistency of the MLE of $(\alpha^*,\pi^*)$.

Let us start by recalling the SBM log-likelihood \eqref{def.log-likelihood.SBM}:
\begin{align*}
    \L_2(\Xn;\alpha,\pi) = \log\paren{ \sum_{\zn} e^{\L_1(\Xn;\zn,\pi)}\P\croch{\Zn=\zn} } \enspace,
\end{align*}
where  $\P\croch{\Zn=\zn}=\prod_{i=1}^n \alpha_{z_i}$, and $(\alpha,\pi)$ are the SBM parameters.
Note that $\L_2(\Xn;\alpha,\pi)$ is an involved expression to deal with. 

First, the $X_{i,j}$s are not independent, which strongly differs from usual statistical settings.
For this reason, no theoretical result has ever been derived for the MLE of SBM parameters. 

Second, another non standard feature of $\L_1$ is the number of random variables which is $n(n-1)$ (and not $n$ as usual).
More precisely, there are $n(n-1)$ edges $X_{i,j}$s but only $n$ vertices. 
This unusual scaling difference
implies a refined treatment of the normalizing constants $n$ and $n(n-1)$, depending on the estimated parameter $\alpha$ and $\pi$ respectively. 
As a consequence, the MLE consistency proof has been split into two parts: ($i$) the consistency of the $\pi$ estimator is addressed by use of an approach based on M-estimators, ($ii$) a result similar to Theorem~\ref{thm.distrib.conv.zn} is combined with a ``deconditioning'' argument to get the consistency of the $\alpha^*$ estimator (Theorem~\ref{thm.consist.alpha.MLE}) at the price of an additional assumption on the rate of convergence of the estimator $\pih$ of $\pi^*$.

\medskip

The consistency of the MLE of $\pi$ strongly relies on a general theorem which is inspired from that for M-estimators \citep{VW}.
\begin{thm} \label{thm.Mestim.general}
Let $\paren{\Theta,d}$ and  $\paren{\Psi,d'}$ denote metric spaces,
and let $M_n:\ \Theta\times \Psi\to \R$ be a random function and $\mathbb{M}:\ \Theta \to \R$ a deterministic one such that for every $\epsilon>0$,
\begin{align}
&   \sup_{ d\paren{ \theta, \theta_0 } \geq \epsilon } \mathbb{M}\paren{\theta } <  \mathbb{M}\paren{\theta_0 }\enspace, \label{ineg.max.Bien.Identif.MLE}\\
&   \sup_{  \paren{\theta,\psi} \in \Theta\times\Psi } \abs{M_n\paren{\theta,\psi}-\mathbb{M}\paren{\theta}} \defegal \norm{M_n-\mathbb{M}}_{\Theta\times\Psi} \xrightarrow[n\to +\infty]{P} 0 \enspace. \label{ineg.conv.unif.MLE}
\end{align}
Moreover, set $( \widehat{\theta},\widehat{\psi} ) = \argmax_{\theta, \psi}M_n\paren{\theta,\psi}$.
Then,
\begin{align*}
    d\paren{\widehat{\theta},\theta_0} \xrightarrow[n\to +\infty]{P} 0 \enspace.
\end{align*}
\end{thm}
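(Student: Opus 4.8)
The plan is to adapt the classical argument of the consistency of M-estimators \citep{VW}, taking care of the auxiliary parameter $\psi$ by exploiting that the uniform convergence in~\eqref{ineg.conv.unif.MLE} is over the full product space $\Theta\times\Psi$. Fix $\epsilon>0$. First I would observe that by~\eqref{ineg.max.Bien.Identif.MLE}, the deterministic quantity
\begin{align*}
\delta \defegal \mathbb{M}(\theta_0) - \sup_{d(\theta,\theta_0)\geq \epsilon} \mathbb{M}(\theta)
\end{align*}
is strictly positive. The goal is to show that, with probability tending to $1$, one has $d(\widehat\theta,\theta_0) < \epsilon$; equivalently, that the event $\{d(\widehat\theta,\theta_0)\geq \epsilon\}$ has probability going to $0$.

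Next I would run the usual chain of inequalities. On the one hand, by definition of $(\widehat\theta,\widehat\psi)$ as a maximizer and by~\eqref{ineg.conv.unif.MLE},
\begin{align*}
M_n(\widehat\theta,\widehat\psi) \geq \sup_{\psi\in\Psi} M_n(\theta_0,\psi) \geq M_n(\theta_0,\widehat\psi) \geq \mathbb{M}(\theta_0) - \norm{M_n-\mathbb{M}}_{\Theta\times\Psi}\enspace.
\end{align*}
On the other hand, again by~\eqref{ineg.conv.unif.MLE},
\begin{align*}
M_n(\widehat\theta,\widehat\psi) \leq \mathbb{M}(\widehat\theta) + \norm{M_n-\mathbb{M}}_{\Theta\times\Psi}\enspace.
\end{align*}
Combining the two displays gives $\mathbb{M}(\widehat\theta) \geq \mathbb{M}(\theta_0) - 2\norm{M_n-\mathbb{M}}_{\Theta\times\Psi}$. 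Now on the event $\{d(\widehat\theta,\theta_0)\geq \epsilon\}$ one has $\mathbb{M}(\widehat\theta) \leq \sup_{d(\theta,\theta_0)\geq\epsilon}\mathbb{M}(\theta) = \mathbb{M}(\theta_0) - \delta$, so that event forces $2\norm{M_n-\mathbb{M}}_{\Theta\times\Psi} \geq \delta$. Hence
\begin{align*}
\P\paren{ d(\widehat\theta,\theta_0) \geq \epsilon } \leq \P\paren{ \norm{M_n-\mathbb{M}}_{\Theta\times\Psi} \geq \delta/2 } \xrightarrow[n\to+\infty]{} 0
\end{align*}
by~\eqref{ineg.conv.unif.MLE}. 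Since $\epsilon>0$ was arbitrary, this is exactly $d(\widehat\theta,\theta_0)\xrightarrow{P}0$.

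I do not expect any serious obstacle here: the only point requiring a little care is the handling of $\widehat\psi$, namely inserting $\widehat\psi$ (rather than an optimal $\psi$) into $M_n(\theta_0,\cdot)$ and absorbing the resulting slack into the uniform-over-$\Psi$ bound — which is precisely why~\eqref{ineg.conv.unif.MLE} is stated with the supremum over $\Theta\times\Psi$ rather than just $\Theta$. One should also note the mild implicit assumption that the argmax $(\widehat\theta,\widehat\psi)$ exists (or otherwise replace it by an approximate maximizer, which changes nothing in the argument). No measurability subtleties beyond those already implicit in writing $\norm{M_n-\mathbb{M}}_{\Theta\times\Psi}\xrightarrow{P}0$ need to be addressed.
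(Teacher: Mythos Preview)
Your proof is correct and follows essentially the same classical M-estimator argument as the paper: both use the well-separation condition~\eqref{ineg.max.Bien.Identif.MLE} to translate $\{d(\widehat\theta,\theta_0)\geq\epsilon\}$ into $\{\mathbb{M}(\widehat\theta)\leq\mathbb{M}(\theta_0)-\mathrm{const}\}$, then sandwich $M_n$ and $\mathbb{M}$ via the uniform bound~\eqref{ineg.conv.unif.MLE} and the maximizing property of $(\widehat\theta,\widehat\psi)$. Your write-up is simply more explicit than the paper's terse version (and your remarks on the role of $\widehat\psi$ and on the implicit existence of the argmax are apt).
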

One important difference between Theorem~\ref{thm.Mestim.general} and its usual counterpart for M-estimators \citep{VW} is that $M_n$ and $\mathbb{M}$ do not depend on the same number of arguments.
Our consistency result for the MLE of $\pi$ strongly relies on this point.

\begin{proof}[Proof of Theorem~\ref{thm.Mestim.general}]
For every $\eta>0$, there exists $\delta>0$ such that
\begin{align*}
    P\croch{ d\paren{\widehat{\theta},\theta_0} \geq \eta} & \leq P\croch{ \mathbb{M}(\widehat{\theta}) \leq \mathbb{M}(\theta_0) - 3\delta}\enspace.
\end{align*}
Since $\norm{M_n-\mathbb{M}}_{\Theta\times\Psi} \xrightarrow[n\to +\infty]{P} 0$, it comes that for large enough values of $n$,
\begin{align*}
    P\croch{ d\paren{\widehat{\theta},\theta_0} \geq \eta} & \leq P\croch{ M_n(\widehat{\theta},\widehat{\psi}) \leq M_n(\theta_0,\psi_0) - \delta} + o(1)\\
& \leq o(1) \enspace.
\end{align*}

\end{proof}
The leading idea in what follows is to check the assumptions of Theorem~\ref{thm.Mestim.general}.

\medskip

The main point of our approach consists in using $P^*=P^*_{\alpha^*,\pi^*}$ (Section~\ref{subsec.main.asymptotic.result}) as a reference probability measure, that is working as if $\Zn=\zn^*$ were known.
In this setting, a key quantity is
\begin{align*}
    \L_1(\Xn;\zn,\pi)=\sum_{i\neq j}
\acc{ X_{i,j}\log\pi_{z_i,z_j}+(1-X_{i,j}) \log(1-\pi_{z_i,z_j} ) } \enspace,
\end{align*}
where $(\zn,\pi)$ are interpreted as parameters.
For any  $(\zn,\pi)$, let us introduce
\begin{align*}
    \phi_n\paren{\zn,\pi} & \defegal \frac{1}{n(n-1)} \L_1\paren{\Xn;\zn,\pi}\enspace, \\
    \Phi_n\paren{\zn,\pi} & \defegal \E\croch{\phi_n\paren{\zn,\pi} \mid \Zn=\zn^*}\enspace,
\end{align*}
where the expectation is computed with respect to $P^*=P^*_{\alpha^*,\pi^*}$. 
Actually our strategy (using Theorem~\ref{thm.Mestim.general}) only requires to prove $\phi_n$ and $\Phi_n$ are uniformly close to each other on a subset of parameters denoted by $\mathcal{P}$ (see also the proof of Theorem~\ref{thm.unif.conv} for more details) and defined as follows
\begin{align} \label{eq.admissible.set.parameters}
    \mathcal{P}=\acc{ (\zn,\pi) \mid \eqref{assum.pi.block.diff},\ \eqref{assum.pi.trunc},\ \abs{\Phi_n\paren{\zn,\pi}}<+\infty } \enspace.
\end{align}
Showing this uniform convergence between $\phi_n$ and $\Phi_n$ over $\mathcal{P}$ is precisely the purpose of Proposition~\ref{prop.unif.conv.cond.model}.
Its proof, which is deferred to Appendix~\ref{Appendix.prop.unif.conv.M1}, strongly relies on Talagrand's concentration inequality \citep{Mass_2007}.
\begin{prop}\label{prop.unif.conv.cond.model}
With the above notation, let us assume \eqref{assum.pi.block.diff} and \eqref{assum.pi.trunc} hold true. Then,
\begin{align*}      \sup_{ \mathcal{P} } \abs{\phi_n(\zn,\pi)-\Phi_n(\zn,\pi)} \xrightarrow[n\to +\infty]{\P} 0 \enspace.
\end{align*}
\end{prop}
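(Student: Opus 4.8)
The plan is to bound the supremum over $\mathcal{P}$ by a concentration argument, after first discretizing in $\pi$ and handling the combinatorial size of the label set $\zn$ by a union bound. Fix $(\zn,\pi)\in\mathcal{P}$. Then $n(n-1)\phi_n(\zn,\pi)=\L_1(\Xn;\zn,\pi)=\sum_{i\neq j}g_{z_i,z_j}(X_{i,j})$ where $g_{q,l}(x)=x\log\pi_{q,l}+(1-x)\log(1-\pi_{q,l})$. Under $P^*$ (i.e.\ conditionally on $\Zn=\zn^*$), the variables $\{X_{i,j}\}_{i\neq j}$ are independent (this is the key structural fact quoted in Section~\ref{subsec.modelSBM}), so $n(n-1)\bigl(\phi_n(\zn,\pi)-\Phi_n(\zn,\pi)\bigr)$ is a sum of $n(n-1)$ independent centered terms. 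Assumption~\eqref{assum.pi.trunc} forces every nonzero $\pi_{q,l}$ into $[\zeta,1-\zeta]$, so on $\mathcal{P}$ each summand $g_{z_i,z_j}(X_{i,j})-\E[g_{z_i,z_j}(X_{i,j})\mid\Zn=\zn^*]$ is bounded by a constant $c_\zeta$ depending only on $\zeta$ — note that whenever $\pi_{q,l}\in\{0,1\}$ the condition $|\Phi_n(\zn,\pi)|<\infty$ in the definition of $\mathcal{P}$ rules out the only configuration that could make the term infinite, so no infinite summands ever occur. First I would therefore apply Talagrand's concentration inequality \citep{Mass_2007} (or, given the boundedness, even Bernstein/Hoeffding suffices) to the functional $(i,j)\mapsto g_{z_i,z_j}(X_{i,j})$ to obtain, for each fixed $(\zn,\pi)$,
\begin{align*}
P^*\Bigl[\bigl|\phi_n(\zn,\pi)-\Phi_n(\zn,\pi)\bigr|>t\Bigr]\leq 2\exp\paren{-C\,n(n-1)\,t^2}
\end{align*}
for a constant $C=C(\zeta)>0$.

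Next I would control the supremum. The label vector $\zn$ ranges over at most $Q^n$ values, and the scaling here is the crucial point the paper emphasizes: $Q^n=e^{n\log Q}$ is exponentially small compared to the $e^{n(n-1)t^2}$ gain from concentration, so the union bound over $\zn$ is harmless. For the $\pi$ variable, which lives in a compact subset of $[\zeta,1-\zeta]^{Q^2}$ (a set of fixed finite dimension $Q^2$ independent of $n$), I would take an $\epsilon_n$-net $\mathcal{N}_n$ of cardinality $(C'/\epsilon_n)^{Q^2}$; since on $[\zeta,1-\zeta]$ the maps $\pi_{q,l}\mapsto\log\pi_{q,l}$ and $\pi_{q,l}\mapsto\log(1-\pi_{q,l})$ are Lipschitz with constant $1/\zeta$, both $\phi_n$ and $\Phi_n$ are Lipschitz in $\pi$ uniformly in $\zn$, so moving from $\pi$ to its nearest net point changes $\phi_n-\Phi_n$ by at most $2\epsilon_n/\zeta$. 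Choosing $\epsilon_n$ polynomially small (say $\epsilon_n=1/n$) makes this deterministic error vanish while keeping $|\mathcal{N}_n|$ polynomial in $n$. A union bound over $\mathcal{N}_n\times\{\text{label vectors}\}$ then gives
\begin{align*}
P^*\Bigl[\sup_{\mathcal{P}}\bigl|\phi_n-\Phi_n\bigr|>t\Bigr]\leq 2\,Q^n\,(C'n)^{Q^2}\exp\paren{-C\,n(n-1)(t/2)^2}\xrightarrow[n\to\infty]{}0
\end{align*}
for every fixed $t>0$, which is the claim under $P^*$; the statement under $\P$ follows by integrating over $\zn^*$, since all bounds are uniform in $z^*$.

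The main obstacle, I expect, is bookkeeping rather than deep: one must verify carefully that restricting to $\mathcal{P}$ (via the finiteness of $\Phi_n$ together with \eqref{assum.pi.trunc}) genuinely produces uniformly bounded, hence sub-Gaussian, summands even in the presence of coordinates $\pi_{q,l}\in\{0,1\}$ — the subtle case being a label pair $(z_i,z_j)$ with $\pi_{z_i,z_j}=0$ but some observed $X_{i,j}=1$ (or $\pi=1$ with $X_{i,j}=0$), which is exactly what the $|\Phi_n(\zn,\pi)|<\infty$ clause excludes. A secondary technical point is that $\Phi_n(\zn,\pi)=\E[\phi_n(\zn,\pi)\mid\Zn=\zn^*]$ depends on $\zn^*$ only through the empirical count matrix, so the "variance" term in Talagrand's inequality and the resulting constant $\kappa$ can be taken free of $z^*$; this uniformity in $z^*$ is what lets the $\P$-version follow immediately and, as the paper notes, is what makes the result usable downstream in the consistency proofs.
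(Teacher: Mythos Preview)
Your argument is correct and complete; the key observations---conditional independence under $P^*$, uniform boundedness of the summands on $\mathcal{P}$ (with the $\pi_{q,l}\in\{0,1\}$ terms contributing zero almost surely once $|\Phi_n|<\infty$ forces $\pi^*_{z^*_i,z^*_j}=\pi_{z_i,z_j}$), the $Q^n$ vs.\ $e^{-cn^2}$ scaling, and uniformity in $z^*$---are exactly the right ones. One small piece of bookkeeping you should make explicit: the net must be built separately on each of the finitely many ``patterns'' specifying which entries of $\pi$ lie in $\{0,1\}$ and which lie in $[\zeta,1-\zeta]$, so that the net point agrees with $\pi$ on the $\{0,1\}$ coordinates; otherwise the Lipschitz step can fail. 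This adds only a constant factor $3^{Q^2}$.

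The route, however, differs from the paper's. The paper first rewrites $\phi_n-\Phi_n=\rho_n\sum_{i\neq j}\xi_{ij}\,g(\pi_{z_i,z_j})$ with $g(t)=\log\bigl(t/(1-t)\bigr)$, so that for fixed $\zn$ the supremum over $\pi$ is a supremum of a \emph{linear} functional of the independent centered $\xi_{ij}$'s over a bounded index set. It then applies Talagrand's inequality directly to $\sup_{\pi\in\mathcal{P}(\zn)}$, bounding the expected supremum by symmetrization (Lemma~\ref{lem.expect.upper.bound}), and finally union-bounds over the $Q^n$ label vectors with $x_n=n\log n$. Your approach replaces the Talagrand step by pointwise Hoeffding plus an $\epsilon$-net in $\pi$. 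What you gain is elementarity: you never need the expected-supremum bound or the symmetrization lemma. What the paper gains is that the continuous parameter $\pi$ is handled in a single stroke without any discretization, which is cleaner and avoids the pattern-by-pattern net construction above.
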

Actually Proposition~\ref{prop.unif.conv.cond.model} is crucial to prove the following theorem which settles the desired properties for $\L_2(\Xn; \alpha,\pi)$, that is  \eqref{ineg.max.Bien.Identif.MLE} (uniform convergence) and  \eqref{ineg.conv.unif.MLE} (well-identifiability).

\begin{thm}\label{thm.unif.conv} Let us assume that  \eqref{assum.pi.block.diff}, \eqref{assum.pi.trunc}, and \eqref{assum.alpha.trunc} hold, and
for every $(\alpha,\pi)$, set
$   M_n(\alpha,\pi)  =  \croch{n(n-1)}^{-1}\L_2(\Xn;\alpha,\pi)\enspace$, and
\begin{align*}
& \mathbb{M}(\pi) \\
 =&  \max_{ \acc{a_{i,j}} \in\mathcal{A} }\acc{ \sum_{q,l}\alpha_q^* \alpha_l^* \sum_{q',l'}\croch{ a_{q,q'}a_{l,l'}\pi^*_{q,l} \log \pi_{q',l'}+(1-\pi^*_{q,l})
\log(1-\pi_{q',l'}) } }\enspace,
\end{align*}
where $(\alpha^*,\pi^*)$ denotes the true parameter of SBM, and
$\mathcal{A}  = \acc{ A=\paren{a_{i,j}}_{1\leq i,j \leq Q}\mid a_{q,q'} \geq 0,\ \sum_{q'} a_{q,q'}=1 } \subset \mathcal{M}_Q(\R)$.\\
Then for any $\eta>0$,
\begin{align*}
& \sup_{d(\pi,\pi^*) \ge \eta } \mathbb{M}(\pi)<\mathbb{M}(\pi^*)
 \enspace,\\ 
& \sup_{\alpha,\pi} \abs{ M_n(\alpha,\pi) - \mathbb{M}(\pi) } \xrightarrow[n\to+\infty]{\P} 0 \enspace,
\end{align*}
where $d$ denotes a distance.
\end{thm}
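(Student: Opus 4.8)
The plan is to verify the two displayed claims of Theorem~\ref{thm.unif.conv} separately: first the \emph{well-identifiability} statement for the deterministic contrast $\mathbb{M}$, and then the \emph{uniform convergence} of $M_n$ towards $\mathbb{M}$, so that Theorem~\ref{thm.Mestim.general} applies with $\Theta$ the space of admissible $\pi$ (equipped with $d$), $\Psi$ the space of admissible $\alpha$, and $\theta_0=\pi^*$.

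\textbf{Step 1: rewriting $M_n$ as a profiled conditional contrast.} Starting from $\L_2(\Xn;\alpha,\pi)=\log\sum_{\zn}e^{\L_1(\Xn;\zn,\pi)}\P[\Zn=\zn]$, I would isolate the true label sequence $\zn^*$ and write $M_n(\alpha,\pi)=[n(n-1)]^{-1}\L_1(\Xn;\zn^*,\pi)+[n(n-1)]^{-1}\log\bigl(\sum_{\zn}e^{\L_1(\Xn;\zn,\pi)-\L_1(\Xn;\zn^*,\pi)}\P[\Zn=\zn]\bigr)$. The second term lies between $[n(n-1)]^{-1}\log\P[\Zn=\zn^*]$ (which is $O(\log\,Q^n/n^2)=O(1/n)\to 0$) and $[n(n-1)]^{-1}\log\bigl(Q^n\max_{\zn}e^{\L_1(\Xn;\zn,\pi)-\L_1(\Xn;\zn^*,\pi)}\bigr)$; combining this with a union bound over the (at most $Q^n$, hence $e^{o(n^2)}$) label sequences and Proposition~\ref{prop.unif.conv.cond.model}, one gets that $M_n(\alpha,\pi)$ is uniformly (in $\alpha,\pi$) within $o_{\P}(1)$ of $\max_{\zn}\phi_n(\zn,\pi)$, and then of $\max_{\zn}\Phi_n(\zn,\pi)$. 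So the problem reduces to identifying $\max_{\zn}\Phi_n(\zn,\pi)$ and showing it concentrates around $\mathbb{M}(\pi)$.

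\textbf{Step 2: computing $\Phi_n(\zn,\pi)$ and passing to the limit.} Under $P^*$, $X_{i,j}$ is Bernoulli with mean $\pi^*_{z^*_i,z^*_j}$, so $\Phi_n(\zn,\pi)=[n(n-1)]^{-1}\sum_{i\neq j}\{\pi^*_{z^*_i,z^*_j}\log\pi_{z_i,z_j}+(1-\pi^*_{z^*_i,z^*_j})\log(1-\pi_{z_i,z_j})\}$. Grouping indices by the pair of values $(z^*_i,z^*_j)=(q,l)$ and $(z_i,z_j)=(q',l')$, this becomes $\sum_{q,l}\sum_{q',l'}\frac{N_{q,q'}N_{l,l'}}{n(n-1)}\{\pi^*_{q,l}\log\pi_{q',l'}+(1-\pi^*_{q,l})\log(1-\pi_{q',l'})\}$ up to the diagonal correction, where $N_{q,q'}=N_{q,q'}(\zn^*,\zn)=|\{i:z^*_i=q,\ z_i=q'\}|$. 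Writing $a^{(n)}_{q,q'}=N_{q,q'}/N_q(\zn^*)$, the quantity $\max_{\zn}\Phi_n(\zn,\pi)$ is, up to $O(1/n)$, the maximum over the finite-dimensional polytope of admissible confusion matrices $\{a^{(n)}\}\subset\mathcal{A}$ of a continuous function whose coefficients $N_q(\zn^*)N_l(\zn^*)/[n(n-1)]\to\alpha^*_q\alpha^*_l$ (by the law of large numbers, or more simply along the true realization, with the caveat that this is where $\Phi_n$ already freezes $\zn^*$). Since the objective is continuous in $(a,\text{coefficients})$ and the feasible set converges, $\max_{\zn}\Phi_n(\zn,\pi)\to\mathbb{M}(\pi)$ uniformly in $\pi$ (using \eqref{assum.pi.trunc} to keep all $\log$ terms bounded, uniformly continuous in $\pi$ on the truncated domain, and \eqref{assum.alpha.trunc}/\eqref{assum.alpha.empirique.trunc} to keep $N_q(\zn^*)/n$ bounded away from $0$). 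Here one must be slightly careful that the $\to$ is in probability over the randomness of $\zn^*$; chaining this with Step~1 gives the second display.

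\textbf{Step 3: well-identifiability.} Fix $\pi$. For each block $(q,l)$, the inner sum over $(q',l')$ splits into independent one-dimensional terms $a_{q,q'}a_{l,l'}$ times $g(\pi^*_{q,l},\pi_{q',l'})$ where $g(p,x)=p\log x+(1-p)\log(1-x)$; maximizing the Kullback-type criterion, the unconstrained optimum of $\sum_{q',l'}a_{q,q'}a_{l,l'}g(\pi^*_{q,l},\pi_{q',l'})$ over a probability weighting picks, for each $(q,l)$, the pair(s) $(q',l')$ minimizing the cross-entropy, i.e.\ with $\pi_{q',l'}$ closest to $\pi^*_{q,l}$; the value is maximal and equals $\sum_{q,l}\alpha^*_q\alpha^*_l g(\pi^*_{q,l},\pi^*_{q,l})$ exactly when, after a common permutation $a$ acting as a permutation matrix, $\pi_{q',l'}=\pi^*_{q,l}$ on the support, and strictly smaller otherwise by strict concavity of $x\mapsto g(p,x)$ and the fact that $g(p,x)<g(p,p)$ for $x\neq p$. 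Because $a$ is forced to be a single permutation $\sigma$ applied to both indices (the product structure $a_{q,q'}a_{l,l'}$ is what enforces this — this is exactly the place where the ``$M_n$ and $\mathbb{M}$ have different numbers of arguments'' remark and Assumption~\eqref{assum.pi.block.diff} together do the work), $\mathbb{M}(\pi)=\mathbb{M}(\pi^*)$ forces $\pi=(\pi^*)^{\sigma^{-1}}$ for some permutation $\sigma$, hence $d(\pi,\pi^*)=0$ once $d$ is chosen to be the label-switching-invariant distance (equivalently, identifiability up to label switching means $\pi$ and $\pi^*$ are in the same equivalence class). Therefore $\sup_{d(\pi,\pi^*)\ge\eta}\mathbb{M}(\pi)<\mathbb{M}(\pi^*)$, the strictness being uniform because $\{d(\pi,\pi^*)\ge\eta\}$ intersected with the compact truncated domain is compact and $\mathbb{M}$ is continuous.

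\textbf{Main obstacle.} The delicate point is Step~1–2, namely controlling the $\log\sum_{\zn}$ term: one needs the union bound over exponentially many label sequences to be absorbed by the $n(n-1)$ normalization (it is, since $Q^n=e^{o(n^2)}$), and one needs the uniform-in-$(\zn,\pi)$ closeness of $\phi_n$ to $\Phi_n$ from Proposition~\ref{prop.unif.conv.cond.model} to survive the maximization over $\zn$ — this is precisely why Proposition~\ref{prop.unif.conv.cond.model} is stated uniformly over $\mathcal{P}$ rather than pointwise. A secondary subtlety is that $\Phi_n$ is defined conditionally on $\zn^*$, so the convergence $\max_{\zn}\Phi_n(\zn,\pi)\to\mathbb{M}(\pi)$ is only ``in probability / along typical realizations''; handling the diagonal terms $X_{i,i}=0$ and the $\frac{1}{n(n-1)}$ vs.\ $\frac{1}{n^2}$ discrepancy is routine but must be done to keep everything at the right order $O(1/n)$.
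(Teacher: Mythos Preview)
Your overall architecture matches the paper's: the same three-term decomposition
\[
|M_n(\alpha,\pi)-\mathbb{M}(\pi)|\le |M_n-\phi_n(\znh,\pi)|+|\phi_n(\znh,\pi)-\Phi_n(\znt,\pi)|+|\Phi_n(\znt,\pi)-\mathbb{M}(\pi)|,
\]
the first term killed by the $n$ vs.\ $n(n-1)$ scaling (the paper does this via Lemma~\ref{lem.unif.gap.likelihoods.variation}), the second by Proposition~\ref{prop.unif.conv.cond.model} plus the usual ``swap argmax'' trick, and the third by rewriting $\Phi_n$ through confusion matrices $a_{q,q'}=N_{q,q'}/N_q(\zn^*)$ and letting the LLN carry $N_q(\zn^*)/n\to\alpha^*_q$. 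The well-identifiability argument via $g(p,x)<g(p,p)$ and compactness is also the paper's.

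There is, however, a genuine gap in your Step~1--2 transition. Proposition~\ref{prop.unif.conv.cond.model} only gives uniform closeness of $\phi_n$ and $\Phi_n$ over the set $\mathcal{P}$ of \eqref{eq.admissible.set.parameters}, i.e.\ where $\abs{\Phi_n(\zn,\pi)}<+\infty$. Assumption~\eqref{assum.pi.trunc} does \emph{not} exclude $\pi_{q,l}\in\{0,1\}$, so for some $(\zn,\pi)$ one can have $\Phi_n(\zn,\pi)=-\infty$ while $\phi_n(\zn,\pi)>-\infty$ (this happens when $\pi_{z_i,z_j}\in\{0,1\}$ but $\pi^*_{z^*_i,z^*_j}\neq\pi_{z_i,z_j}$, yet the realized $X_{i,j}$'s happen to agree with $\pi_{z_i,z_j}$). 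In particular the maximizer $\znh(\pi)$ can land outside $\mathcal{P}$, and then your appeal to Proposition~\ref{prop.unif.conv.cond.model} to pass from $\max_{\zn}\phi_n$ to $\max_{\zn}\Phi_n$ fails. The paper devotes the bulk of its proof of the middle term to this case: it splits according to the cardinality $N(\znh,\pi)$ of the offending index set, uses a Bernoulli argument when $N$ is large, and when $N$ is small constructs (Proposition~\ref{prop.copy.znh}) a nearby $\zn^P\in\mathcal{P}$ with $\phi_n(\zn^P,\pi)$ and $\Phi_n(\zn^P,\pi)$ within $o(1)$ of the corresponding values at $\znh$. Your sentence ``using \eqref{assum.pi.trunc} to keep all $\log$ terms bounded'' is precisely where this breaks: \eqref{assum.pi.trunc} does not keep them bounded.

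A secondary point: in Step~3 you assert that the product structure $a_{q,q'}a_{l,l'}$ ``forces $a$ to be a permutation''. This is the right intuition but needs the extra step the paper spells out: from $\bar a_{q,q'}\bar a_{l,l'}K(\pi^*_{q,l},\pi_{q',l'})=0$ for all $(q,l,q',l')$ one only gets a \emph{function} $f:\{1,\dots,Q\}\to\{1,\dots,Q\}$ with $\pi^*_{q,l}=\pi_{f(q),f(l)}$; it is \eqref{assum.pi.block.diff} that then rules out non-injective $f$ (two equal rows and columns), upgrading $f$ to a permutation.
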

The proof of Theorem~\ref{thm.unif.conv} is given in Appendix~\ref{appendix.theorem.consist.MLE}.
Its uniform convergence part exploits the connection between $\phi_n(\zn,\pi)$ and $\L_2(\Xn; \alpha,\pi)$ (Proposition~\ref{prop.unif.conv.cond.model}).

Let us now deduce the Corollary~\ref{cor.consist.pi.MLE}, which asserts the consistency of the MLE of $\pi^*$.
\begin{cor}\label{cor.consist.pi.MLE}
    Under the same assumptions as Theorem~\ref{thm.unif.conv}, let us define the MLE of $(\alpha^*, \pi^*)$
\begin{align*}
    (\widehat{\alpha},\widehat{\pi}) \defegal \argmax_{(\alpha,\pi)} \L_2(\Xn; \alpha,\pi)\enspace.
\end{align*}
Then for any distance $d(\cdot,\cdot)$ on the set of parameters $\pi$,
\begin{align*}
    d\paren{\widehat{\pi},\pi^*} \xrightarrow[n\to +\infty]{\P} 0 \enspace.
\end{align*}

\end{cor}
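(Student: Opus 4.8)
The plan is to obtain Corollary~\ref{cor.consist.pi.MLE} as an immediate application of the abstract consistency statement Theorem~\ref{thm.Mestim.general}, feeding it the two conclusions of Theorem~\ref{thm.unif.conv} as its hypotheses. Concretely, I would take $\Theta$ to be the set of admissible matrices $\pi$ (those satisfying \eqref{assum.pi.block.diff} and \eqref{assum.pi.trunc}), equipped with the distance $d$ of the corollary, and $\Psi$ to be the set of admissible proportion vectors $\alpha$ (those satisfying \eqref{assum.alpha.trunc}) equipped with, say, the Euclidean distance. Then I identify $\theta \leftrightarrow \pi$, $\psi \leftrightarrow \alpha$, $\theta_0 \leftrightarrow \pi^*$, and use $M_n(\theta,\psi) = M_n(\alpha,\pi) = \croch{n(n-1)}^{-1}\L_2(\Xn;\alpha,\pi)$ together with $\mathbb{M} = \mathbb{M}(\pi)$, exactly as defined in Theorem~\ref{thm.unif.conv}.

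With these identifications, hypothesis \eqref{ineg.max.Bien.Identif.MLE} of Theorem~\ref{thm.Mestim.general} (well-separated maximum of $\mathbb{M}$ at $\pi^*$: $\sup_{d(\pi,\pi^*)\geq\eta}\mathbb{M}(\pi)<\mathbb{M}(\pi^*)$ for every $\eta>0$) and hypothesis \eqref{ineg.conv.unif.MLE} (uniform convergence in $\P$-probability of $M_n$ to $\mathbb{M}$ over $\Theta\times\Psi$) are \emph{verbatim} the two displayed conclusions of Theorem~\ref{thm.unif.conv}, which apply because the corollary postulates precisely the assumptions \eqref{assum.pi.block.diff}, \eqref{assum.pi.trunc}, \eqref{assum.alpha.trunc}. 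The only thing left to check is that the maximiser $(\widehat{\theta},\widehat{\psi})=\argmax_{\theta,\psi}M_n(\theta,\psi)$ appearing in Theorem~\ref{thm.Mestim.general} is the MLE $(\widehat{\alpha},\widehat{\pi})=\argmax_{(\alpha,\pi)}\L_2(\Xn;\alpha,\pi)$ of the corollary: this is immediate, since $n(n-1)$ is a deterministic positive constant (so rescaling $\L_2$ does not move the argmax) and reordering the pair $(\alpha,\pi)$ into $(\pi,\alpha)$ is irrelevant; existence of the maximiser is granted in the statement. Theorem~\ref{thm.Mestim.general} then delivers $d(\widehat{\pi},\pi^*)\xrightarrow[n\to+\infty]{\P}0$, which is the claim.

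There is essentially no genuine obstacle at this stage: all the real work — the Talagrand-type concentration estimate behind Proposition~\ref{prop.unif.conv.cond.model}, the passage from $\phi_n(\zn,\pi)$ to $\L_2(\Xn;\alpha,\pi)$, and the identifiability-type argument yielding the strict inequality for $\mathbb{M}$ — is already encapsulated in Theorem~\ref{thm.unif.conv}. The one point requiring care is conceptual rather than technical: $M_n$ depends on both $\pi$ and $\alpha$ whereas $\mathbb{M}$ depends on $\pi$ alone, so the standard M-estimator machinery does not apply off the shelf; but this mismatch in the number of arguments is exactly the non-standard feature that the tailored Theorem~\ref{thm.Mestim.general} was designed to absorb, so the deduction goes through without modification.
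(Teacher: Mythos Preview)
Your proposal is correct and matches the paper's own proof, which simply states that the corollary is ``a straightforward consequence of Theorem~\ref{thm.Mestim.general} and Theorem~\ref{thm.unif.conv}.'' You have spelled out explicitly the identifications $\theta\leftrightarrow\pi$, $\psi\leftrightarrow\alpha$ and the trivial observation that rescaling by $n(n-1)$ preserves the argmax, but the argument is identical in substance.
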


\begin{proof}[Proof of Corollary~\ref{cor.consist.pi.MLE}]
    This is a straightforward consequence of Theorem~\ref{thm.Mestim.general} and Theorem~\ref{thm.unif.conv}.
\end{proof}

\medskip

A quick inspection of the proof of uniform convergence in Theorem~\ref{thm.unif.conv}
shows that the asymptotic behavior of the log-likelihood $\L_2$
does not depend on $\alpha$.
Roughly speaking, this results from the expression of $\L_2$ in which the number of terms involving $\pi$ is of order $n^2$ whereas only $n$ terms involve $\alpha$.
This difference of scaling with respect to $n$ between $\pi$ and $\alpha$ justifies to some extent a different approach for the MLE of $\alpha^*$.

Our proof heavily relies on  an analogous result to Theorem~\ref{thm.distrib.conv.zn}, where the true value $(\alpha^*,\pi^*)$ of SBM parameters is replaced by an estimator $(\widehat{\alpha},\widehat{\pi})$.
In what follows, $\widehat{P}\paren{\Zn=\zn\mid
\Xn}=\P_{\widehat{\alpha},\widehat{\pi}}\paren{\Zn=\zn\mid
\Xn}$ (Section~\ref{subsec.main.asymptotic.result} and Lemma~\ref{lem.mixture.model.class.freq}) denotes the same quantity as $\P\paren{\Zn=\zn\mid
\Xn}$ where $(\alpha^*,\pi^*)$ has been replaced by $(\widehat{\alpha},\pih)$.
Let us state this result in a general framework since it will be successively used in proofs of Theorems~\ref{thm.consist.alpha.MLE} and~\ref{thm.consist.alpha.VE}.
\begin{prop}
\label{prop.distrib.conv.zn.2}
Let us assume that assumptions \eqref{assum.pi.block.diff}--\eqref{assum.alpha.empirique.trunc} hold, and that there exists an estimator $\pih$ such that $\norm{\pih - \pi^*}_{\infty} = o_{\P}(v_n)$, with $v_n=o\paren{\sqrt{\log n}/n}$.
Let also $\widehat{\alpha}$ denote any estimator of $\alpha^*$.
Then for every $\epsilon>0$,
\begin{align*}
P^*\croch{ \sum_{\zn\neq \zn^*} \frac{ \widehat{P}\paren{\Zn=\zn\mid
\Xn} }{ \widehat{P}\paren{\Zn=\zn^*\mid \Xn} } > \epsilon } \leq
\kappa_1 n e^{-\kappa_2 \frac{(\log n)^2}{n v_n^2} } +  \P\croch{\norm{\widehat{\pi}-\pi^*}_{\infty}>v_n} \enspace
\end{align*}
for $n$ large enough, where $\kappa_1,\kappa_2>0$ are constants independent of $z^*$, and
\begin{align*}
& \log
\paren{ \frac{ \widehat{P}\paren{\Zn=\zn\mid
\Xn} }{ \widehat{P}\paren{\Zn=\zn^*\mid \Xn} } } \\
= &
   \sum_{i\neq j}
\acc{X_{i,j}\log\paren{\frac{\widehat{\pi}_{z_i,z_j}}{\widehat{\pi}_{z^*_i,z^*_j}}}
+(1-X_{i,j})\log\paren{\frac{1-\widehat{\pi}_{z_i,z_j}}{1-\widehat{\pi}_{z^*_i,z^*_j}}}}
+\sum_i\log\frac{\widehat{\alpha}_{z_i}}{\widehat{\alpha}_{z^*_i}} \enspace.
\end{align*}
Moreover, the same result holds replacing $P^*$ by $\P$ under \eqref{assum.pi.block.diff}--\eqref{assum.alpha.trunc}.
\end{prop}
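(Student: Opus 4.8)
The plan is to mimic the argument used for Theorem~\ref{thm.distrib.conv.zn} (whose proof is in Appendix~\ref{append.conv.loi.Z}), but carried out with the estimated parameter $(\widehat\alpha,\widehat\pi)$ in place of $(\alpha^*,\pi^*)$, and then to control the discrepancy between the two. First I would condition on the event $E_n = \acc{\norm{\widehat\pi-\pi^*}_\infty \le v_n}$; on its complement we simply pay the additive term $\P\croch{\norm{\widehat\pi-\pi^*}_\infty > v_n}$, which already appears on the right-hand side. On $E_n$, and under \eqref{assum.pi.trunc}, $\widehat\pi$ has all its nonzero entries in $[\zeta/2, 1-\zeta/2]$ for $n$ large, so the logarithms $\log(\widehat\pi_{z_i,z_j})$, $\log(1-\widehat\pi_{z_i,z_j})$ and the ratios in the displayed identity are well-defined and bounded.

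Next, for each fixed $\zn\neq\zn^*$ I would write
\begin{align*}
\log\paren{\frac{\widehat P(\Zn=\zn\mid\Xn)}{\widehat P(\Zn=\zn^*\mid\Xn)}}
= S_{i\neq j}(\zn) + \sum_i\log\frac{\widehat\alpha_{z_i}}{\widehat\alpha_{z^*_i}}\enspace,
\end{align*}
where $S_{i\neq j}(\zn)$ denotes the double sum over edges. The $\alpha$-term is $O(n)$ uniformly (since under \eqref{assum.alpha.trunc}/\eqref{assum.alpha.empirique.trunc} both $\widehat\alpha$ and $\alpha^*$ can be taken bounded away from $0$, or one passes to the MLE which automatically satisfies this), and is negligible compared with the edge term, which is of order $n^2$ for the ``bad'' $\zn$. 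The core is to show that, given $Z=z^*$, the edge term is with overwhelming probability $\le -c\,n^2 + (\text{correction})$ for some $c>0$. I would split $S_{i\neq j}(\zn)$ into its conditional mean $\mu_n(\zn) \defegal \E^*[S_{i\neq j}(\zn)]$ and a centered fluctuation. For the mean, one replaces $\widehat\pi$ by $\pi^*$ up to an error controlled by $v_n$: a first-order expansion of $t\mapsto p\log t + (1-p)\log(1-t)$ around $t=\pi^*_{q,l}$ shows $\abs{\mu_n(\zn) - \mu_n^*(\zn)} \le C n^2 v_n$, where $\mu_n^*(\zn)$ is the mean computed with $\pi^*$; and from the Theorem~\ref{thm.distrib.conv.zn} analysis one has $\mu_n^*(\zn) \le -c_0 n(n-1)\,\delta(\zn,\zn^*)$, with $\delta(\zn,\zn^*)$ the (normalized) number of edge pairs on which the block structures of $\zn$ and $\zn^*$ disagree, and $\delta$ bounded below by a positive constant times $n$ whenever $\zn\neq\zn^*$ (this uses \eqref{assum.pi.block.diff} and \eqref{assum.alpha.empirique.trunc}, exactly as in the unperturbed case). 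Since $v_n = o(\sqrt{\log n}/n)$, the error $C n^2 v_n = o(n\sqrt{\log n})$ is of smaller order than $\abs{\mu_n^*(\zn)}$, so the mean is still $\le -c_1 n^2$. For the fluctuation, a Bernstein/Hoeffding bound for the bounded independent variables $\acc{X_{i,j}}$ (independent given $Z=z^*$), together with the boundedness of the $\log$-coefficients on $E_n$, gives a bound of the form $\exp(-\kappa_2 (\log n)^2/(n v_n^2))$ after choosing the deviation level appropriately; a union bound over the at most $Q^n$ sequences $\zn$ is absorbed by the factor $\kappa_1 n$ since $(\log n)^2/(n v_n^2) \to \infty$ faster than $n$... more precisely one tunes the deviation threshold so that the per-sequence probability times $Q^n$ is still $\le \kappa_1 n e^{-\kappa_2(\log n)^2/(nv_n^2)}$, using that $n\to 0$ is dominated. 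Summing the geometric-type series $\sum_{\zn\neq\zn^*} e^{-c_1 n^2(\cdots)}$ over $\zn$ then yields the stated bound for $\P^*[\sum_{\zn\neq\zn^*}(\cdots) > \epsilon]$.

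Finally, the uniformity in $z^*$ is inherited from Theorem~\ref{thm.distrib.conv.zn}: the constant $c_1$ in the lower bound on $\abs{\mu_n^*(\zn)}$ depends only on $\pi^*$, $\zeta$ and $\gamma$ via \eqref{assum.pi.block.diff}, \eqref{assum.pi.trunc}, \eqref{assum.alpha.empirique.trunc}, not on the particular realization $z^*$, and the perturbation bound $Cn^2 v_n$ is likewise uniform. The last sentence, with $\P$ in place of $P^*$, follows by integrating the conditional bound over the distribution of $Z=z^*$, using \eqref{assum.alpha.trunc} to ensure \eqref{assum.alpha.empirique.trunc} holds with probability $1-o(ne^{-\kappa n})$ exactly as at the end of the proof of Theorem~\ref{thm.distrib.conv.zn}. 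The main obstacle is the bookkeeping in the second step: tracking that the $v_n$-perturbation of the conditional mean stays asymptotically dominated by the $\Theta(n^2)$ separation while simultaneously calibrating the Bernstein deviation level so that the union bound over $Q^n$ label sequences survives and produces exactly the exponent $(\log n)^2/(n v_n^2)$; the condition $v_n = o(\sqrt{\log n}/n)$ is precisely what makes both constraints compatible.
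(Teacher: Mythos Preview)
Your overall strategy---restrict to $E_n=\{\|\widehat\pi-\pi^*\|_\infty\le v_n\}$, compare with the $\pi^*$-version, and reuse the machinery of Theorem~\ref{thm.distrib.conv.zn}---is exactly the paper's. But the execution has a genuine gap and a scaling error.

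The gap is in the ``mean plus fluctuation'' split of $S_{i\neq j}(\zn)$. The coefficients $\log(\widehat\pi_{z_i,z_j}/\widehat\pi_{z_i^*,z_j^*})$ involve $\widehat\pi$, which is a measurable function of \emph{all} the $X_{i,j}$'s; hence the summands are not conditionally independent given $Z=z^*$, and Hoeffding/Bernstein does not apply to that fluctuation. The paper's remedy is an \emph{algebraic} decomposition of the log-ratio into $T_1+T_2-T_3$, where $T_1$ is the edge sum written with $\pi^*$ (deterministic coefficients, so Hoeffding applies exactly as in Appendix~\ref{append.conv.loi.Z}) and $T_2,T_3$ are perturbation sums over $D^*\cup\widehat D$ with each term $O(v_n)$ on $E_n$. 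Since $|D^*\cup\widehat D|\le 4nr$ by Lemma~\ref{lem.upper.bound.number.couples} (with $r=\|\zn-\zn^*\|_0$), one gets $|T_2|,|T_3|\le C\,nr\,v_n=o(nr)$ deterministically on $E_n$, which is all you need.

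The scaling error: the conditional mean of the $\pi^*$-sum is $\le -c_0|D^*|\le -c_0\gamma^2 nr/2$ (Proposition~\ref{A2}), of order $nr$, not $n^2$; and the union bound must be stratified over $r=1,\dots,n$ with $\binom{n}{r}(Q-1)^r$ labelings per level, as in Appendix~\ref{append.conv.loi.Z}. With the crude deterministic bound on $T_2,T_3$ this already yields $O(ne^{-\kappa n})$, which implies the stated (weaker) bound. The specific exponent $(\log n)^2/(nv_n^2)$ in the statement is an artifact of how the paper bounds $T_2,T_3$: it applies Hoeffding \emph{after} partitioning over all possible realizations of the random index set $\widehat D$, introducing a combinatorial factor $(2nr)^k$ that degrades the exponent. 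Your attempt to read off that exact exponent from a single concentration bound on the full $\widehat\pi$-sum is not where it comes from.
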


The proof of Proposition~\ref{prop.distrib.conv.zn.2} is given in Appendix~\ref{append.prop.loi.Z}. 

In the same way as in Theorem~\ref{thm.distrib.conv.zn}, one crucial point in Proposition~\ref{prop.distrib.conv.zn.2} is the independence of the convergence rate with respect to $\zn^*$. 
Note that the novelty of Proposition~\ref{prop.distrib.conv.zn.2} compared to Theorem~\ref{thm.distrib.conv.zn}  lies in the convergence rate which depends on the behavior of $\widehat{\pi}$. This is the reliable price for estimating rather than knowing $\pi^*$.

We assume $v_n=o\paren{\sqrt{\log n}/n}$, which arises from the proof as a necessary requirement for consistency. However, we do not know whether this is a \emph{necessary} or only a \emph{sufficient} condition.
Furthermore there is still empirical evidence \citep[see][]{GDR} that the rate of convergence of  $\widehat{\pi}$ is of order $1/n$, but this property is assumed and not proved in the present paper.

\medskip

Let us now settle the consistency of the MLE of $\alpha^*$ on the basis of previous Proposition~\ref{prop.distrib.conv.zn.2}.

\begin{thm}\label{thm.consist.alpha.MLE}
Let $(\widehat{\alpha},\pih)$ denote the MLE of $(\alpha^*,\pi^*)$ and assume $\norm{\pih - \pi^*}_{\infty} = o_{\P}\paren{\sqrt{\log n}/n}$.
With the same assumptions as Theorem~\ref{thm.unif.conv}, and the notation of Corollary~\ref{cor.consist.pi.MLE}, then
\begin{align*}
    d( \widehat{\alpha}, \alpha^*) \xrightarrow[n \to +\infty]{\P} 0 \enspace,
\end{align*}
where $d$ denotes any distance between vectors in $\R^Q$.
\end{thm}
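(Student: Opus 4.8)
The plan is to combine the already-established consistency of $\pih$ (Corollary~\ref{cor.consist.pi.MLE}) with Proposition~\ref{prop.distrib.conv.zn.2} and a ``deconditioning'' argument, following exactly the split announced in Section~\ref{sec.consistency.M1}. First I would fix the equivalence class $z^*$ of the true labels and work under $P^*=P^*_{\alpha^*,\pi^*}$. By Corollary~\ref{cor.consist.pi.MLE} together with the hypothesis $\norm{\pih-\pi^*}_\infty = o_\P(\sqrt{\log n}/n)$, the estimator $\pih$ satisfies the rate requirement of Proposition~\ref{prop.distrib.conv.zn.2} with $v_n$ of order $\sqrt{\log n}/n$ so that $(\log n)^2/(nv_n^2)$ stays bounded away from $0$ — actually one should take $v_n$ slightly larger than $\norm{\pih-\pi^*}_\infty$ but still $o(\sqrt{\log n}/n)$ so both terms on the right-hand side of Proposition~\ref{prop.distrib.conv.zn.2} vanish; then
\begin{align*}
\sum_{\zn\neq\zn^*}\frac{\Ph(\Zn=\zn\mid\Xn)}{\Ph(\Zn=\zn^*\mid\Xn)} \xrightarrow[n\to+\infty]{\P} 0\enspace,
\end{align*}
which is equivalent to $\Ph(\Zn=\zn^*\mid\Xn)\to 1$ in probability. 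So with probability tending to $1$ the posterior under the fitted parameters concentrates on the single class $z^*$.

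Next I would express the MLE of $\alpha$ explicitly. At the maximizer the first-order conditions of $\L_2$ in $\alpha$ (with the constraint $\sum_q\alpha_q=1$) give the fixed-point/EM-type identity $\widehat\alpha_q = \E_{\widehat P}\croch{N_q(\Zn)/n \mid \Xn}$, i.e. $\widehat\alpha_q = \tfrac1n\sum_i \Ph(Z_i=q\mid\Xn)$; this is the content alluded to by the reference to Lemma~\ref{lem.mixture.model.class.freq}. Splitting the conditional expectation over $\{\Zn=\zn^*\}$ and its complement,
\begin{align*}
\widehat\alpha_q = \frac{N_q(\zn^*)}{n}\,\Ph(\Zn=\zn^*\mid\Xn) + \frac1n\sum_{\zn\neq\zn^*} N_q(\zn)\,\Ph(\Zn=\zn\mid\Xn)\enspace,
\end{align*}
and since $0\le N_q(\zn)/n\le 1$ the second term is bounded by $1-\Ph(\Zn=\zn^*\mid\Xn)=o_\P(1)$, while the first term is $N_q(\zn^*)/n$ up to $o_\P(1)$. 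Hence $\widehat\alpha_q = N_q(\zn^*)/n + o_\P(1)$ uniformly in $q$.

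It then remains to show $N_q(\zn^*)/n \to \alpha^*_q$. This is the deconditioning step: $\zn^*$ is a realization of a multinomial $\mathcal M(n;\alpha^*_1,\dots,\alpha^*_Q)$, so $N_q(\zn^*)/n\to\alpha^*_q$ $\P$-almost surely by the strong law of large numbers (or in probability by the weak law), hence $d(\widehat\alpha,\alpha^*)\xrightarrow{\P}0$ unconditionally after integrating out $z^*$ — here one uses, as in Theorem~\ref{thm.distrib.conv.zn}, that the $o_\P(1)$ bounds obtained above are uniform with respect to $z^*$, so the conditional convergence given $Z=z^*$ transfers to convergence under $\P$ by dominated convergence. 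The main obstacle I anticipate is precisely this uniformity bookkeeping: one must check that the exponential bound of Proposition~\ref{prop.distrib.conv.zn.2} has constants $\kappa_1,\kappa_2$ not depending on $z^*$ (which it does, as stated) and that the hypothesis $\norm{\pih-\pi^*}_\infty=o_\P(\sqrt{\log n}/n)$ is compatible with choosing a deterministic $v_n$ making $(\log n)^2/(nv_n^2)\to\infty$ while still $\P(\norm{\pih-\pi^*}_\infty>v_n)\to0$; a diagonal/subsequence argument handles this, since $o_\P$ of a sequence allows slipping in an intermediate deterministic rate. Everything else is routine: the EM fixed-point identity for $\widehat\alpha$, the trivial bound $N_q/n\le1$, and the law of large numbers for the multinomial.
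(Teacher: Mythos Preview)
Your proposal is correct and follows essentially the same route as the paper's proof: the fixed-point identity $\widehat\alpha_q=\tfrac1n\sum_i\widehat P(Z_i=q\mid\Xn)$ from Lemma~\ref{lem.mixture.model.class.freq}, the bound $\abs{\widehat\alpha_q-N_q(\zn^*)/n}\le 2\,\widehat P(\Zn\neq\zn^*\mid\Xn)$ (your split over $\{\Zn=\zn^*\}$ and its complement yields exactly this), the appeal to Proposition~\ref{prop.distrib.conv.zn.2} with its uniformity in $z^*$, and the law of large numbers for $N_q/n$ under~$\P$. Your remark about slipping in a deterministic $v_n=o(\sqrt{\log n}/n)$ between $\norm{\pih-\pi^*}_\infty$ and $\sqrt{\log n}/n$ is the right way to reconcile the $o_\P$ hypothesis with the deterministic rate required by Proposition~\ref{prop.distrib.conv.zn.2}.
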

Note that the rate $1/n$ would be reached in ``classical'' parametric models with $n^2$ independent random variables.

\smallskip

\begin{proof}[Proof of Theorem~\ref{thm.consist.alpha.MLE}]
In the mixture model framework of SBM, Lemma~\ref{lem.mixture.model.class.freq} shows the MLE of $\alpha$ is given for any $q$ by
\begin{align*}
    \widehat{\alpha}_q = \frac{1}{n}\sum_{i=1}^n \widehat{P}(Z_i=q \mid \Xn )  \enspace.
\end{align*}
First, let us work with respect to $P^*$, that is given $\Zn=\zn^*$.
Setting $N_q(\zn)=\sum_{i=1}^n \1_{\paren{z_i=q}}$, it comes
\begin{align*}
    \abs{ \widehat{\alpha}_q - N_q(\zn^*)/n } \leq
& \abs{ \frac{1}{n} \sum_{i=1}^n \widehat{P}\paren{Z_i=z^*_i\mid \Xn} \1_{(z^*_i=q)} - N_q(\zn^*)/n }\\
& +
\widehat{P}\paren{Z_{[n]} \neq \zn^* \mid \Xn}\\
\leq &  \frac{1}{n} \sum_{i=1}^n \paren{ 1 - \widehat{P}\paren{Z_i=z^*_i\mid \Xn} }  \1_{(z^*_i=q)} \\
& +
\widehat{P}\paren{Z_{[n]} \neq \zn^* \mid \Xn}\\
\leq &  \frac{1}{n} \sum_{i=1}^n \paren{  \widehat{P}\paren{Z_i \neq z^*_i\mid \Xn} }  \1_{(z^*_i=q)} +
\widehat{P}\paren{Z_{[n]} \neq \zn^* \mid \Xn}\\
\leq & \ 2 \widehat{P}\paren{Z_{[n]} \neq \zn^* \mid \Xn}
\enspace.
\end{align*}
Note the last inequality results from
\begin{align*}
\frac{1}{n}\sum_i \widehat{P}\paren{Z_i \neq z^*_i\mid \Xn} & \leq \max_{i=1,\ldots,n} \widehat{P}\paren{Z_i \neq z^*_i \mid \Xn} \\
& \leq \widehat{P} \croch{ \cup_{i=1}^n (Z_i \neq z_i^*) \mid \Xn } = \widehat{P}\paren{Z_{[n]} \neq \zn^* \mid \Xn} \enspace.  
\end{align*}

Second, let us now use a ``deconditioning argument'' replacing $P^*$ by $\P$.
Let $N_q=N_q(\Zn)$ denote a binomial random variable $\mathcal{B}(n,\alpha^*_q)$ for every $q$.
Then for every $\epsilon>0$,
\begin{align*}
    &\  \P\croch{ \abs{\widehat{\alpha}_q-\alpha_q^*}>\epsilon } \\
 \leq &\
\P\croch{ \abs{ \widehat{\alpha}_q - N_q/n }>\epsilon/2 } +
\P\croch{ \abs{N_q/n-\alpha_q^*}>\epsilon/2 } \\
 \leq &\  \P\croch{ \abs{ \widehat{\alpha}_q - N_q/n }>\epsilon/2 } +
o(1) \enspace,
\end{align*}
by use of LLG.
Finally, a straightforward use of Proposition~\ref{prop.distrib.conv.zn.2} leads to
\begin{align*}
& \ \P \croch{ \abs{ \widehat{\alpha}_q - N_q/n }>\epsilon/2 } \\
= &\  \E_{\Zn}\croch{ P\paren{ \abs{ \widehat{\alpha}_q - N_q/n }>\epsilon/2 \mid Z_{[n]}} } \\
\leq & \  \sum_{\zn} P\croch{ \widehat{P}\paren{Z_{[n]} \neq \zn \mid \Xn} > \epsilon/4 \mid Z_{[n]}=\zn} \P\croch{Z_{[n]}=\zn} \\
= &\ o(1) \enspace.
\end{align*}

\end{proof}


\section{Variational estimators of SBM parameters}\label{sec.variational}

In Section~\ref{sec.MLE.SBM}, consistency has been proved for the maximum likelihood estimators. 
However this result is essentially theoretical since in practice the MLE can only be computed for very small graphs (with less than $20$ vertices). 
Nevertheless, such results for the MLE are useful in at least two respects. First from a general point of view, they provide a new strategy to derive  consistency of estimators obtained from likelihoods in non-\iid settings.
Second in the framework of the present paper, these results are exploited to settle the consistency of the variational estimators.

The main interest of \emph{variational estimators} in SBM is that unlike the MLE ones, they are \emph{useful in practice} since they enable to deal with very huge graphs (several thousands of vertices).
Indeed the log-likelihood
$\L_2\paren{\Xn; \alpha,\pi}$ involves a sum over $Q^n$ terms, which is intractable except for very small and unrealistic values of $n$:
\begin{align}\label{exp.likelihood}
\L_2(\Xn; \alpha,\pi) = \log \acc{  \sum_{\zn \in \Zdefn} e^{
\sum_{i \neq j} b_{ij}(z_i,z_j)  } P_{\Zn}(\zn) }    \enspace,
\end{align}
with $ b_{ij}(z_i,z_j)=X_{i,j}\log\pi_{z_i,z_j}+ (1-X_{i,j})\log
(1-\pi_{z_i,z_j})$.
To circumvent this problem, alternatives are for instance Markov chain Monte Carlo (MCMC) algorithms \citep{AnAt_2007} and variational approximation \citep{JGJS_1999}.
However, MCMC algorithms suffer a high computational cost, which makes them unattractive compared to variational approximation.
Actually the variational method can deal with thousands of vertices in a reasonable computation time thanks to its complexity in $\mathcal{O}(n^2)$. 
For instance, \citet{Mix} package (based on variational approximation) deals with up to several thousands of vertices, whereas STOCNET package \citep[see][]{Stocnet} (Gibbs sampling) only deals with a few hundreds of vertices.
Note that other approaches based on \emph{profile-likelihood} have been recently developed and studied for instance by \citet{BC}.

\smallskip

The purpose of the present section is to prove that the variational approximation
yields consistent estimators of the SBM parameters.
The resulting estimators will be called {\it variational estimators} (VE).

\subsection{Variational approximation} \label{subsec.variational.approx}
To the best of our knowledge, the first use of variational approximation for SBM has been made by \citet{DPR}.
The variational method consists in approximating $P^{\Xn}=\P\paren{\Zn=\cdot\mid \Xn}$
by a product of $n$ multinomial distributions (see \eqref{def.product.distribution}). 
The computational virtue of this trick is to replace an intractable sum over $Q^n$ terms (see \eqref{exp.likelihood}) by a sum over only $n^2$ terms (Eq.~\eqref{exp.variatonal.functional}).

Let us define $\mathcal{D}_n$ as a set of product multinomial distributions
\begin{align} \label{def.product.distribution}
    \mathcal{D}_n = \acc{ D_{\taun}= \prod_{i=1}^n\M(1,\tau_{i,1},\ldots,\tau_{i,Q}) \mid \taun \in \mathcal{S}_n} \enspace,
\end{align}
where
\begin{align*}
\mathcal{S}_n=\acc{ \taun = \paren{\tau_1,\ldots,\tau_n} \in \paren{[0,1]^Q}^n \mid \forall i,\ \tau_i=\paren{\tau_{i,1},\ldots,\tau_{i,Q}}, \sum_{q=1}^Q \tau_{i,q}=1} \enspace.
\end{align*}
For any $D_{\taun} \in \mathcal{D}_n$, the variational
log-likelihood, $\J(\cdot; \cdot, \cdot, \cdot)$ is defined by
\begin{align} \label{eq.variational.approx}
     \J(\Xn; \taun, \alpha, \pi) = \L_2(\Xn;  \alpha, \pi) - K\paren{D_{\taun},P^{\Xn}}\enspace,
 \end{align}
where $K(.,.)$ denotes  the Kullback-Leibler divergence, and $P^{\Xn}=\P\paren{\Zn=\cdot\mid \Xn}$.
With this choice of $\mathcal{D}_n$, $\J(\Xn; \taun, \alpha, \pi)$ has the following expression (see \citet{DPR} and the proof of Lemma~\ref{lem.Daudin.et.al}):
\begin{align} \label{exp.variatonal.functional}
\J(\Xn; \taun, \alpha, \pi)= \sum_{i  \neq  j} \sum_{q,l} b_{ij}(q,l) \tau_{i,q} \tau_{j,l} - \sum_{iq} \tau_{i,q} \paren{\log \tau_{i,q} -  \log\alpha_{q}} \enspace,
\end{align}
where $ b_{ij}(q,l)= X_{i,j}\log \pi_{q,l}+ (1-X_{i,j})\log(1-\pi_{q,l})$.
The main improvement of Eq.~\eqref{exp.variatonal.functional} upon Eq.~\eqref{exp.likelihood} is that $\J(\Xn; \taun, \alpha, \pi)$ can be fully computed for every $(\taun, \alpha, \pi)$.  
The variational approximation $R_{\Xn}$ to $P^{\Xn}$ is given by solving the minimization problem over $\mathcal{D}_n$:
\begin{align*}
    R_{\Xn} \in \argmin _{D_{\tau}\in\mathcal{D}_n} K\paren{D_{\tau},P^{\Xn}}\enspace,
 \end{align*}
as long as such a minimizer exists, which amounts to maximizing $\J(\Xn; \taun, \alpha, \pi) $ as follows
\begin{align*}
\tahn = \tahn(\pi,\alpha) := \argmax_{\taun} \J(\Xn; \taun, \alpha, \pi) \enspace.
\end{align*}
The variational estimators (VE) of $(\alpha^*,\pi^*)$ are
\begin{align} \label{def.tau.var}
(\widetilde{\alpha},\widetilde{\pi}) = \argmax_{\alpha,\pi}\J(\Xn; \tahn, \alpha, \pi) \enspace.
\end{align}
Note that in practice, the variational algorithm maximizes $\J(\Xn; \tau, \alpha, \pi)$ alternatively with respect to $\tau$ and $(\alpha,\pi)$  \citep[see][]{DPR}.
Furthermore since $(\alpha,\pi)\mapsto \J(\Xn; \tahn, \alpha, \pi)$ is not concave, this variational algorithm can lead to local optima in the same way as for likelihood optimization.

\medskip

In the sequel, the same notation as in Section~\ref{sec.MLE.SBM} is used.
In particular it is assumed that a realization of SBM is observed, which has been generated from the sequence of true labels $Z=z^*$.
In this setting, $P^*=P^*_{\alpha^*,\pi^*}$ (Section~\ref{subsec.main.asymptotic.result}) denotes the conditional distribution $\P\paren{ \cdot \mid Z=z^*}$ given the whole label sequence.
The first result provides some assurance about the reliability of the variational approximation to $P^{\Xn}=\P_{\alpha^*,\pi^*}\croch{\Zn=\cdot \mid \Xn}$ (Section~\ref{subsec.main.asymptotic.result}).
\begin{prop} \label{prop.kullback.convergence.zero}
For every $n$, let $\mathcal{D}_n$ denote the set defined by \eqref{def.product.distribution}, and $P^{\Xn}\paren{\cdot}$ be the distribution of $\Zn$ given $\Xn$. Then, assuming \eqref{assum.pi.block.diff}--\eqref{assum.alpha.trunc} hold,
\begin{align*}
K(R_{\Xn},P^{\Xn})\defegal \inf_{D \in \mathcal{D}_n} K(D,P^{\Xn}) \xrightarrow[n\to\infty]{}0\enspace,\qquad P^*-a.s. \enspace.
\end{align*}
\end{prop}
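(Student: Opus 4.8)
The plan is to exhibit an explicit member of $\mathcal{D}_n$ whose Kullback-Leibler divergence from $P^{\Xn}$ tends to $0$, $P^*$-almost surely, which suffices since $K(R_{\Xn},P^{\Xn})\le K(D,P^{\Xn})$ for every $D\in\mathcal{D}_n$. The natural candidate is the Dirac-type product distribution concentrated on the true label configuration $\zn^*$: take $\taun$ with $\tau_{i,z^*_i}=1$ and $\tau_{i,q}=0$ for $q\ne z^*_i$, and call the resulting product multinomial $D^*_n\in\mathcal{D}_n$. For this choice one computes directly
\begin{align*}
K(D^*_n,P^{\Xn}) = \sum_{\zn} D^*_n(\zn)\log\frac{D^*_n(\zn)}{P^{\Xn}(\zn)} = \log\frac{1}{\P\paren{\Zn=\zn^*\mid\Xn}} = -\log \P\paren{\Zn=\zn^*\mid\Xn}\enspace,
\end{align*}
using that $D^*_n$ puts mass $1$ on the single point $\zn^*$ and mass $0$ elsewhere (so the $0\log 0$ terms vanish by convention). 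The only subtlety is to make sure $P^{\Xn}(\zn^*)>0$; this holds because under \eqref{assum.pi.trunc} and \eqref{assum.alpha.trunc} every label configuration has strictly positive prior probability and every admissible adjacency matrix has strictly positive likelihood given $\Zn=\zn^*$, so the posterior mass on $\zn^*$ is positive.

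The conclusion then follows from Corollary~\ref{cor.distrib.conv.as.zn}, which asserts exactly that $\P\paren{\Zn=\zn^*\mid\Xn}\to 1$, $\P$-almost surely, and in fact $P^*$-almost surely under assumptions \eqref{assum.pi.block.diff}--\eqref{assum.alpha.empirique.trunc}; by continuity of $-\log$ at $1$ this gives $-\log\P\paren{\Zn=\zn^*\mid\Xn}\to 0$ almost surely, hence $K(R_{\Xn},P^{\Xn})\le K(D^*_n,P^{\Xn})\to 0$ almost surely. One small bookkeeping point: everything should be phrased at the level of equivalence classes of label sequences (Section~\ref{sec.equiv.class}), so that $\zn^*$ denotes its class and $D^*_n$ is understood to distribute its mass uniformly over the finitely many representatives of $\croch{\zn^*}_\pi$; since all such representatives share the same posterior value, the computation of $K(D^*_n,P^{\Xn})$ is unchanged.

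The main (and essentially only) obstacle is simply to verify that the almost-sure convergence in Corollary~\ref{cor.distrib.conv.as.zn} is available under the stated hypotheses \eqref{assum.pi.block.diff}--\eqref{assum.alpha.trunc} with respect to $P^*$ — but this is precisely the content of that corollary together with the last sentence of Theorem~\ref{thm.distrib.conv.zn}, so no further work is needed. In particular, no concentration inequality or uniform-convergence argument is required here: the result is a direct corollary of the posterior-concentration theorem of the previous section, and the variational bound is used only in the trivial direction $K(R_{\Xn},P^{\Xn})=\inf_{D\in\mathcal{D}_n}K(D,P^{\Xn})\le K(D^*_n,P^{\Xn})$.
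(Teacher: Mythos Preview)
Your proposal is correct and follows essentially the same approach as the paper: both bound $K(R_{\Xn},P^{\Xn})$ above by $K(\delta_{\zn^*},P^{\Xn})=-\log\P\paren{\Zn=\zn^*\mid\Xn}$ and then invoke Theorem~\ref{thm.distrib.conv.zn} and Corollary~\ref{cor.distrib.conv.as.zn}. Your added remarks on positivity of $P^{\Xn}(\zn^*)$ and on the equivalence-class bookkeeping are sound elaborations that the paper leaves implicit.
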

Note that this convergence result is given with respect to $P^*$ (and not to $\P$).
Stronger results can be obtained (see Section~\ref{subsec.variational.approx}) thanks to fast convergence rates.
Proposition~\ref{prop.kullback.convergence.zero} yields some confidence in the reliability of the variational approximation, which gets closer to $P^{\Xn}$ as $n$ tends to $+\infty$.
However, it does not provide any warranty about the good behavior of variational estimators, which is precisely the goal of following Section~\ref{subsubsec.uniform.asymptotics.J.L}.

\smallskip

\begin{proof}[Proof of Proposition~\ref{prop.kullback.convergence.zero}]
~\\

By definition of the variational approximation,
\begin{align*}
    K(R_{\Xn},P^{\Xn})  \leq K(\delta_{\zn^*},P^{\Xn}) \enspace,
\end{align*}
where $\delta_{\zn^*}=\prod_{1\leq i\leq n} \delta_{z^*_i} \in \mathcal{D}_n$.
Then,
\begin{align*}
    K(R_{\Xn},P^{\Xn})  \leq K(\delta_{\zn^*},P^{\Xn}) = -\log\croch{P\paren{\Zn=\zn^* \mid \Xn}}
     \enspace,
\end{align*}
since
 \begin{align*}
     K(\delta_{\zn^*},P^{\Xn}) & = \sum_{\zn}\delta_{\zn^*}(\zn) \log\left[ \frac{\delta_{\zn^*}(\zn)}{P^{\Xn}(\zn)}\right] \\
& =-\log\croch{P\paren{\Zn=\zn^* \mid \Xn}}
     \enspace.
\end{align*}
The conclusion results from Theorem~\ref{thm.distrib.conv.zn}, and Corollary~\ref{cor.distrib.conv.as.zn} since $P\paren{\Zn=\zn^* \mid \Xn}\xrightarrow[n\to\infty]{}~1\quad P^*-~a.s.\,$.
\end{proof}

\subsection{Consistency of the variational estimators}\label{subsubsec.uniform.asymptotics.J.L}

Since the variational log-likelihood $\mathcal{J}(\cdot;\cdot,\cdot,\cdot)$ \eqref{eq.variational.approx} is defined from the log-likelihood $\mathcal{L}_2(\cdot;\cdot,\cdot)$ , the properties of $\J(\Xn; \taun, \alpha, \pi)$ are strongly connected to those of $\L_2(\Xn;  \alpha, \pi)$.
Therefore, the strategy followed in the present section is very similar to that of Section~\ref{sec.MLE.SBM}.
In particular, the consistency of $\widetilde{\pi}$ (VE of $\pi^*$, see~\eqref{def.tau.var}) is addressed first.
The consistency of the VE of $\alpha^*$ ($\widetilde{\alpha}$, see~\eqref{def.tau.var}) is handled in a second step and depends on the convergence rate of the estimator of $\widetilde{\pi}$.

\medskip

The first step consists in applying Theorem~\ref{thm.Mestim.general} to settle the $\widetilde{\pi}$ consistency.
Following results aim at justifying the use of Theorem~\ref{thm.Mestim.general} by checking its assumptions.

Theorem~\ref{thm.uniform.P-as.Var.Likelihood} states that $\L_2$
and $\J$ are asymptotically equivalent uniformly with respect to
$\alpha$ and $\pi$.
\begin{thm}\label{thm.uniform.P-as.Var.Likelihood}
With the same notation as Theorem~\ref{thm.unif.conv} and Section~\ref{subsec.variational.approx},
let us define
\begin{align*}
    J_n\paren{\alpha,\pi} \defegal \frac{1}{n(n-1)}\J\paren{\Xn;
\tahn, \alpha, \pi}\enspace.
\end{align*}
Then, \eqref{assum.pi.trunc} and \eqref{assum.alpha.trunc} lead to
\begin{align*}
    \sup_{\alpha,\pi}\acc{ \abs{J_n\paren{\alpha,\pi}- M_n( \alpha, \pi)} } = o\paren{1},\quad \P-a.s.\enspace,
\end{align*}
where the supremum is computed over sets fulfilling \eqref{assum.pi.trunc} and \eqref{assum.alpha.trunc}.
\end{thm}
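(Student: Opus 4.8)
The goal is to control $\sup_{\alpha,\pi}|J_n(\alpha,\pi)-M_n(\alpha,\pi)|$, where by definition $n(n-1)[M_n(\alpha,\pi)-J_n(\alpha,\pi)] = \L_2(\Xn;\alpha,\pi)-\J(\Xn;\tahn,\alpha,\pi) = K(D_{\tahn},P^{\Xn})$ by \eqref{eq.variational.approx}, and $\tahn$ is chosen to \emph{maximize} $\J$, hence to minimize this Kullback--Leibler term over $\mathcal{D}_n$. So the quantity to bound is
\begin{align*}
0 \le M_n(\alpha,\pi)-J_n(\alpha,\pi) = \frac{1}{n(n-1)} \inf_{D\in\mathcal{D}_n} K\paren{D,P^{\Xn}_{\alpha,\pi}}\enspace,
\end{align*}
where I write $P^{\Xn}_{\alpha,\pi}$ to stress that the posterior depends on the parameter at which $\J$ and $\L_2$ are evaluated (not on $(\alpha^*,\pi^*)$). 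The first step is therefore to produce, \emph{for every} $(\alpha,\pi)$ in the admissible set, a single candidate $D\in\mathcal{D}_n$ making $K(D,P^{\Xn}_{\alpha,\pi})$ small, uniformly in $(\alpha,\pi)$ and $P^*$-a.s.

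\textbf{Step 1: a uniform analogue of Proposition~\ref{prop.kullback.convergence.zero}.} As in the proof of Proposition~\ref{prop.kullback.convergence.zero}, the natural candidate is the Dirac mass $\delta_{\znh}$ at the label sequence $\znh = \znh(\alpha,\pi)$ maximizing $\P_{\alpha,\pi}\paren{\Zn=\cdot\mid\Xn}$ over equivalence classes; then $\inf_{D\in\mathcal{D}_n}K(D,P^{\Xn}_{\alpha,\pi}) \le K(\delta_{\znh},P^{\Xn}_{\alpha,\pi}) = -\log \P_{\alpha,\pi}\paren{\Zn=\znh\mid\Xn} \le -\log \P_{\alpha,\pi}\paren{\Zn=\zn^*\mid\Xn}$. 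So it suffices to show $\sup_{\alpha,\pi}\bigl[-\log\P_{\alpha,\pi}\paren{\Zn=\zn^*\mid\Xn}\bigr] = o(n^2)$, $P^*$-a.s., the supremum over $(\alpha,\pi)$ satisfying \eqref{assum.pi.trunc}--\eqref{assum.alpha.trunc}. Writing $-\log\P_{\alpha,\pi}\paren{\Zn=\zn^*\mid\Xn} = \log\bigl(\sum_{\zn}\tfrac{\P_{\alpha,\pi}\paren{\Zn=\zn\mid\Xn}}{\P_{\alpha,\pi}\paren{\Zn=\zn^*\mid\Xn}}\bigr) \le \log\bigl(1+\sum_{\zn\ne\zn^*}\tfrac{\P_{\alpha,\pi}\paren{\Zn=\zn\mid\Xn}}{\P_{\alpha,\pi}\paren{\Zn=\zn^*\mid\Xn}}\bigr)$, I would bound the ratio sum by a deterministic quantity of order $n^2$: using \eqref{assum.pi.trunc} and \eqref{assum.alpha.trunc} each log-ratio $\log\tfrac{\P_{\alpha,\pi}(\Zn=\zn\mid\Xn)}{\P_{\alpha,\pi}(\Zn=\zn^*\mid\Xn)}$ is bounded, uniformly in $(\alpha,\pi)$ and in $\Xn$, by $n(n-1)|\log(\zeta/(1-\zeta))| + n|\log(\gamma/(1-\gamma))| =: C n^2$ (for $n$ large), whence $\sum_{\zn\ne\zn^*}(\cdots) \le Q^n e^{Cn^2}$ and $-\log\P_{\alpha,\pi}\paren{\Zn=\zn^*\mid\Xn} \le Cn^2 + n\log Q$. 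That crude bound is $\mathcal{O}(n^2)$, not $o(n^2)$, so it is \emph{not} enough on its own.

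\textbf{Step 2: getting the $o(n^2)$, i.e. the $o(1)$ after normalization.} Here is the main obstacle: I need the $\inf$ to be $o(n^2)$ uniformly in $(\alpha,\pi)$, and the trivial bound above saturates at $\mathcal{O}(n^2)$. The way out is that for \emph{most} configurations the posterior concentration is in fact exponentially strong. I would split the supremum over $(\alpha,\pi)$ into (a) a neighbourhood of $(\alpha^*,\pi^*)$, where Theorem~\ref{thm.distrib.conv.zn} (or rather the estimate behind it, applied with perturbed parameters as in Proposition~\ref{prop.distrib.conv.zn.2}) gives $-\log\P_{\alpha,\pi}\paren{\Zn=\zn^*\mid\Xn}\le \sum_{\zn\ne\zn^*}\tfrac{\P_{\alpha,\pi}(\Zn=\zn\mid\Xn)}{\P_{\alpha,\pi}(\Zn=\zn^*\mid\Xn)} = \mathcal{O}(ne^{-\kappa n})$ uniformly, a fortiori $o(n^2)$; and (b) the complement, a compact set bounded away from $(\alpha^*,\pi^*)$. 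On region (b) the naive $\mathcal{O}(n^2)$ bound is the issue, but there the mechanism is different: one uses that $\L_2(\Xn;\alpha,\pi) - \L_2(\Xn;\alpha^*,\pi^*)$ is itself of order $n^2$ with a strictly negative leading coefficient (this is exactly the well-identifiability content of Theorem~\ref{thm.unif.conv}), while $\J(\Xn;\tahn,\alpha,\pi)\ge \J(\Xn;\delta_{\zn^*},\alpha,\pi) = \L_1(\Xn;\zn^*,\pi) + \sum_i\log\alpha_{z^*_i}$, and $\L_1(\Xn;\zn^*,\pi)$ is again of order $n^2$; comparing the two leading-order expressions via the law of large numbers for the $\binom{n}{2}$ (conditionally independent) edge variables, the gap $\L_2 - \J$ divided by $n(n-1)$ tends to $0$ after taking the supremum over the compact set, by a standard Glivenko--Cantelli / finite $\epsilon$-net argument combined with equicontinuity of $(\alpha,\pi)\mapsto$ the limiting functionals (which follows from \eqref{assum.pi.trunc}, \eqref{assum.alpha.trunc} giving uniform Lipschitz bounds on $\log\pi$, $\log(1-\pi)$, $\log\alpha$). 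Passing from in-$P^*$-probability to $P^*$-a.s. uses the exponential tails (Talagrand, as in Proposition~\ref{prop.unif.conv.cond.model}) and Borel--Cantelli, and the uniformity over $z^*$ throughout comes — as the paper repeatedly stresses — from the fact that the constants $\kappa,\kappa_1,\kappa_2$ in Theorems~\ref{thm.distrib.conv.zn} and Proposition~\ref{prop.distrib.conv.zn.2} do not depend on $z^*$, together with assumption \eqref{assum.alpha.empirique.trunc}. Finally, the ``deconditioning'' from $P^*$ to $\P$ is routine: average the $P^*$-statement over the distribution of $\Zn$ and invoke \eqref{assum.alpha.empirique.trunc} to discard the negligible event where the empirical class frequencies drop below $\gamma$.

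\textbf{Remark on where the difficulty concentrates.} The genuinely delicate point is region (b): showing that even when $(\alpha,\pi)$ is far from the truth — so the posterior $P^{\Xn}_{\alpha,\pi}$ need not concentrate on $\zn^*$ at all, and may even spread mass over exponentially many label sequences — the best product approximation still achieves $K(R_{\Xn},P^{\Xn}_{\alpha,\pi}) = o(n^2)$. The resolution is not that $K$ is small pointwise but that the \emph{difference} $\L_2 - \J$, which is what actually enters $M_n - J_n$, is squeezed between $\L_2(\Xn;\alpha,\pi)$ and $\L_1(\Xn;\zn^*,\pi)+\sum_i\log\alpha_{z_i^*}$, two quantities whose $n(n-1)$-normalizations share the same almost-sure limit uniformly over the compact parameter set; this is precisely the leverage that the scaling mismatch (order $n^2$ for $\pi$-terms versus order $n$ for $\alpha$-terms) provides, and it is the reason the conclusion does not depend on $\alpha$ in the limit, exactly mirroring the phenomenon already exploited in the proof of Theorem~\ref{thm.unif.conv}.
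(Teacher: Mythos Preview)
Your approach has a genuine gap, and it stems from choosing the wrong Dirac candidate. In Step~2(b) you lower-bound $\J(\Xn;\tahn,\alpha,\pi)$ by $\J(\Xn;\delta_{\zn^*},\alpha,\pi)=\L_1(\Xn;\zn^*,\pi)+\sum_i\log\alpha_{z^*_i}$ and then claim that $\L_2(\Xn;\alpha,\pi)/[n(n-1)]$ and $\L_1(\Xn;\zn^*,\pi)/[n(n-1)]$ share the same almost-sure limit uniformly in $(\alpha,\pi)$. This is false. By Theorem~\ref{thm.unif.conv}, $\L_2/[n(n-1)]\to\mathbb{M}(\pi)=\mathbb{M}(\pi,\bar A_\pi)=\max_{A\in\mathcal A}\mathbb{M}(\pi,A)$, whereas $\L_1(\Xn;\zn^*,\pi)/[n(n-1)]=\phi_n(\zn^*,\pi)\to\mathbb{M}(\pi,I_Q)$ (since $N_{qq'}(\zn^*)/N_q^*=\delta_{qq'}$). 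For $\pi$ away from $\pi^*$, the maximizer $\bar A_\pi$ need not be $I_Q$, and then $\mathbb{M}(\pi)>\mathbb{M}(\pi,I_Q)$ strictly. Your sandwich therefore leaves a gap of order $n(n-1)$ and the argument does not close.

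The paper's proof avoids all of this by picking the Dirac mass at $\znh=\znh(\pi)=\argmax_{\zn}\L_1(\Xn;\zn,\pi)$ rather than at $\zn^*$. The chain
\[
\J(\Xn;\znh,\alpha,\pi)\;\le\;\J(\Xn;\tahn,\alpha,\pi)\;\le\;\L_2(\Xn;\alpha,\pi)\;\le\;\L_1(\Xn;\znh,\pi)
\]
(Lemma~\ref{lem.unif.conv.var}) together with the identity $\J(\Xn;\znh,\alpha,\pi)=\L_1(\Xn;\znh,\pi)+\sum_i\log\alpha_{\zh_i}$ (Lemma~\ref{lem.Daudin.et.al}) yields the \emph{deterministic} bound $0\le\L_2-\J(\tahn)\le-\sum_i\log\alpha_{\zh_i}\le n\log(1/\gamma)$ under \eqref{assum.alpha.trunc}. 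Dividing by $n(n-1)$ gives $|J_n-M_n|\le\log(1/\gamma)/(n-1)$ uniformly in $(\alpha,\pi)$, which is $o(1)$ surely --- no probabilistic input, no case split, no posterior concentration, no Glivenko--Cantelli. The whole point is that $\znh$ depends on $\pi$, so the $\L_1$ terms cancel exactly and only the order-$n$ prior term survives.
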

This statement is stronger than Proposition~\ref{prop.kullback.convergence.zero}
in several respects.
On the one hand, convergence applies almost surely with respect to $\P$ and not $P^*$.
On the other hand, Theorem~\ref{thm.uniform.P-as.Var.Likelihood} exhibits the convergence rate toward 0, which is not faster than $n(n-1)$.

\smallskip

\begin{proof}[Proof of Theorem~\ref{thm.uniform.P-as.Var.Likelihood}]
~\\
From the definitions of $\L_1$, $\L_2$, and $\J$ (respectively given by Eq.~\eqref{def.log-likelihood.L1}, Eq.~\eqref{def.log-likelihood.SBM}, and Eq.~\eqref{eq.variational.approx}) and recalling $\znh =\znh (\pi)=\argmax_{\zn}\L_1(\Xn; \zn, \pi)$,
Lemma~\ref{lem.unif.conv.var} leads to
\begin{align*}
\J(\Xn; \taun, \alpha, \pi) \leq \L_2(\Xn;  \alpha, \pi) \leq  \L_1(\Xn; \znh,\pi) \enspace.
\end{align*}
Then applying \eqref{assum.alpha.trunc} and Lemma~\ref{lem.unif.gap.likelihoods.variation}, there exists $0<\gamma<1$ independent of $(\alpha,\pi)$ such that
\begin{align*}
\abs{\J(\Xn; \tahn, \alpha, \pi)-\L_1(\Xn; \znh, \pi)} \leq &\ n \log (1/\gamma) \enspace.
\end{align*}
The conclusion results straightforwardly.
\end{proof}

\medskip

The consistency of $\widetilde{\pi}$ is provided by the following result, which is a simple consequence of Theorem~\ref{thm.uniform.P-as.Var.Likelihood}, Proposition~\ref{prop.unif.conv.cond.model}, and Theorem~\ref{thm.Mestim.general}.
\begin{cor}
    With the notation of Theorem~\ref{thm.uniform.P-as.Var.Likelihood} and assuming  \eqref{assum.pi.block.diff}, \eqref{assum.pi.trunc}, and \eqref{assum.alpha.trunc} hold, let us define the VE of $(\alpha^*,\pi^*)$
\begin{align*}
(\widetilde{\alpha},\widetilde{\pi}) = \argmax_{\alpha,\pi} J_n(\alpha, \pi) \enspace.
\end{align*}
Then for any distance $d(\cdot,\cdot)$ on the set of $\pi$ parameters,
\begin{align*}
    d( \widetilde{\pi}, \pi^*) \xrightarrow[n \rightarrow +\infty]{\P} 0 \enspace.
\end{align*}
 \end{cor}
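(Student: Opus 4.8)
The plan is to invoke Theorem~\ref{thm.Mestim.general} with $\Theta$ the set of admissible $\pi$ parameters (equipped with the distance $d$), $\Psi$ the set of admissible $\alpha$ parameters, $\theta_0 = \pi^*$, and the random function $M_n$ replaced by $J_n(\alpha,\pi)$ while the deterministic limit $\mathbb{M}(\pi)$ is the same as in Theorem~\ref{thm.unif.conv}. The argmax of $J_n$ over $(\alpha,\pi)$ is $(\widetilde{\alpha},\widetilde{\pi})$ by definition, so once the two hypotheses \eqref{ineg.max.Bien.Identif.MLE} and \eqref{ineg.conv.unif.MLE} of Theorem~\ref{thm.Mestim.general} are verified for $J_n$ and $\mathbb{M}$, the conclusion $d(\widetilde{\pi},\pi^*) \xrightarrow{\P} 0$ follows immediately.

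The well-separation condition \eqref{ineg.max.Bien.Identif.MLE}, namely $\sup_{d(\pi,\pi^*)\geq \eta}\mathbb{M}(\pi) < \mathbb{M}(\pi^*)$, has already been established in Theorem~\ref{thm.unif.conv} — it does not involve $J_n$ at all, only the deterministic functional $\mathbb{M}$, so it is reused verbatim. It therefore remains to check the uniform convergence \eqref{ineg.conv.unif.MLE}, i.e.\ $\sup_{\alpha,\pi}\abs{J_n(\alpha,\pi)-\mathbb{M}(\pi)} \xrightarrow{\P} 0$, over the parameter sets fulfilling \eqref{assum.pi.block.diff}, \eqref{assum.pi.trunc}, and \eqref{assum.alpha.trunc}. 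Here I would use the triangle inequality to split this supremum as
\begin{align*}
\sup_{\alpha,\pi}\abs{J_n(\alpha,\pi)-\mathbb{M}(\pi)} \leq \sup_{\alpha,\pi}\abs{J_n(\alpha,\pi)-M_n(\alpha,\pi)} + \sup_{\alpha,\pi}\abs{M_n(\alpha,\pi)-\mathbb{M}(\pi)} \enspace.
\end{align*}
The first term on the right tends to $0$ $\P$-almost surely (hence in probability) by Theorem~\ref{thm.uniform.P-as.Var.Likelihood}, which asserts precisely that $J_n$ and $M_n$ are asymptotically equivalent uniformly in $(\alpha,\pi)$ over these sets. The second term tends to $0$ in $\P$-probability by the uniform convergence part of Theorem~\ref{thm.unif.conv} (itself resting on Proposition~\ref{prop.unif.conv.cond.model} and Talagrand's inequality). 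Combining the two pieces yields \eqref{ineg.conv.unif.MLE} for $J_n$.

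I do not anticipate a genuine obstacle here, since all the analytic work has been front-loaded into Theorem~\ref{thm.uniform.P-as.Var.Likelihood} and Theorem~\ref{thm.unif.conv}; the only point requiring a modicum of care is that the hypotheses of Theorem~\ref{thm.Mestim.general} must be matched to the situation where the extra argument of the random function is $\alpha$ (not $\psi$ as written), and where the supremum in \eqref{ineg.conv.unif.MLE} is restricted to the sets cut out by \eqref{assum.pi.trunc} and \eqref{assum.alpha.trunc} rather than the full product space — but Theorem~\ref{thm.Mestim.general} was deliberately stated so as to accommodate an auxiliary argument on which the limit does not depend, so this causes no difficulty. The corollary thus follows as a one-line consequence, exactly as claimed.
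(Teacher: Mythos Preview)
Your proof is correct and follows exactly the route the paper intends: the paper states that the corollary ``is a simple consequence of Theorem~\ref{thm.uniform.P-as.Var.Likelihood}, Proposition~\ref{prop.unif.conv.cond.model}, and Theorem~\ref{thm.Mestim.general}'' and that the argument ``is completely similar to that of Corollary~\ref{cor.consist.pi.MLE}''. Your triangle-inequality splitting $|J_n-\mathbb{M}|\leq|J_n-M_n|+|M_n-\mathbb{M}|$, with the two pieces handled respectively by Theorem~\ref{thm.uniform.P-as.Var.Likelihood} and Theorem~\ref{thm.unif.conv}, is precisely this.
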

The proof is completely similar to that of Corollary~\ref{cor.consist.pi.MLE} and is therefore not reproduced here.

\medskip

Finally, the consistency for the VE of $\alpha^*$ is derived from the same \emph{deconditioning argument} as that one used for the MLE (proof of Theorem~\ref{thm.consist.alpha.MLE}). 
Consistency for $\widetilde{\alpha}$ is stated by the following result where a convergence rate of $1/n$ is assumed for $\widetilde{\pi}$. Note that at least some empirical evidence and heuristics exist \citep[see][]{GDR} in favour of this rate.
\begin{thm}\label{thm.consist.alpha.VE}
Let us assume the VE $\widetilde{\pi}$ converges at rate $1/n$ to $\pi^*$.
With the same assumptions as Theorem~\ref{thm.uniform.P-as.Var.Likelihood} and assuming \eqref{assum.pi.block.diff}, \eqref{assum.pi.trunc}, and \eqref{assum.alpha.trunc} hold, then
\begin{align*}
    d( \widetilde{\alpha}, \alpha^*) \xrightarrow[n \to +\infty]{\P} 0 \enspace,
\end{align*}
where $d$ denotes any distance between vectors in $\R^Q$.
\end{thm}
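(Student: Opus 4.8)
The plan is to follow the structure of the proof of Theorem~\ref{thm.consist.alpha.MLE}, replacing the \emph{a posteriori} law $\P(\Zn=\cdot\mid\Xn)$ by its variational surrogate. First, let $\tahn=\tahn(\widetilde{\pi},\widetilde{\alpha})$ be the variational parameters realized at the VE, with components $\widehat{\tau}_{i,q}$. Writing the first-order conditions in $\alpha$ for the explicit expression \eqref{exp.variatonal.functional} of $\J$ under the constraint $\sum_q\alpha_q=1$ (the variational analogue of Lemma~\ref{lem.mixture.model.class.freq}) gives $\widetilde{\alpha}_q=\frac1n\sum_{i=1}^n\widehat{\tau}_{i,q}$. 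Since $\sum_l\widehat{\tau}_{i,l}=1$, one has $\abs{\widehat{\tau}_{i,q}-\1_{(z^*_i=q)}}\le 1-\widehat{\tau}_{i,z^*_i}$ for every $i$, hence, with $N_q(\zn^*)=\sum_i\1_{(z^*_i=q)}$,
\[
\abs{\widetilde{\alpha}_q-N_q(\zn^*)/n}=\abs{\tfrac1n\sum_{i=1}^n\paren{\widehat{\tau}_{i,q}-\1_{(z^*_i=q)}}}\le\tfrac1n\sum_{i=1}^n\paren{1-\widehat{\tau}_{i,z^*_i}}.
\]
So it suffices to prove that $\tfrac1n\sum_{i=1}^n(1-\widehat{\tau}_{i,z^*_i})$ converges to $0$ in $P^*$-probability, with a rate uniform in $z^*$.

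Second, I would transfer the concentration of the \emph{a posteriori} law onto the variational approximation. By \eqref{eq.variational.approx}, $D_{\tahn}$ minimizes $D\mapsto K(D,\widetilde{P})$ over $\mathcal{D}_n$, where $\widetilde{P}\defegal\P_{\widetilde{\alpha},\widetilde{\pi}}(\Zn=\cdot\mid\Xn)$; comparing with $\delta_{\zn^*}=\prod_i\delta_{z^*_i}\in\mathcal{D}_n$ exactly as in the proof of Proposition~\ref{prop.kullback.convergence.zero},
\[
K\paren{D_{\tahn},\widetilde{P}}\le K\paren{\delta_{\zn^*},\widetilde{P}}=-\log\widetilde{P}\paren{\Zn=\zn^*\mid\Xn}.
\]
Proposition~\ref{prop.distrib.conv.zn.2}, applied with $\widehat{\pi}=\widetilde{\pi}$ (the $1/n$ rate being compatible with a choice of $v_n$ such that $1/n=o(v_n)$ and $v_n=o(\sqrt{\log n}/n)$, e.g.\ $v_n=\sqrt{\log\log n}/n$) and with $\widehat{\alpha}=\widetilde{\alpha}$ (any estimator of $\alpha^*$ being allowed there), entails $\widetilde{P}(\Zn=\zn^*\mid\Xn)\xrightarrow[n\to+\infty]{\P}1$, hence $K(D_{\tahn},\widetilde{P})\xrightarrow[n\to+\infty]{\P}0$, both with a rate uniform in $z^*$ since the constants in Proposition~\ref{prop.distrib.conv.zn.2} do not depend on $z^*$. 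Pinsker's inequality then gives $\abs{D_{\tahn}(\zn^*)-\widetilde{P}(\Zn=\zn^*\mid\Xn)}\le\sqrt{K(D_{\tahn},\widetilde{P})/2}$, so $D_{\tahn}(\zn^*)=\prod_i\widehat{\tau}_{i,z^*_i}\xrightarrow[n\to+\infty]{\P}1$; using $0\le\sum_i(1-\widehat{\tau}_{i,z^*_i})\le-\sum_i\log\widehat{\tau}_{i,z^*_i}=-\log\prod_i\widehat{\tau}_{i,z^*_i}$, we conclude $\tfrac1n\sum_i(1-\widehat{\tau}_{i,z^*_i})\xrightarrow[n\to+\infty]{\P}0$ uniformly in $z^*$, with respect to $P^*$.

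Third, I would conclude by the \emph{deconditioning argument} of the proof of Theorem~\ref{thm.consist.alpha.MLE}. For $\epsilon>0$, with $N_q=N_q(\Zn)\sim\mathcal{B}(n,\alpha^*_q)$,
\[
\P\croch{\abs{\widetilde{\alpha}_q-\alpha^*_q}>\epsilon}\le\P\croch{\abs{\widetilde{\alpha}_q-N_q/n}>\epsilon/2}+\P\croch{\abs{N_q/n-\alpha^*_q}>\epsilon/2};
\]
the second term is $o(1)$ by the law of large numbers, and the first equals $\E_{\Zn}\croch{P^*\paren{\abs{\widetilde{\alpha}_q-N_q(\zn^*)/n}>\epsilon/2}}\le\E_{\Zn}\croch{P^*\paren{\tfrac1n\sum_i(1-\widehat{\tau}_{i,z^*_i})>\epsilon/2}}$, which tends to $0$ by dominated convergence thanks to the uniformity in $z^*$ obtained above. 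Since $d$ is an arbitrary distance on $\R^Q$, this gives $d(\widetilde{\alpha},\alpha^*)\xrightarrow[n\to+\infty]{\P}0$.

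I expect the only genuinely delicate point to be the second step: propagating the concentration of $\widetilde{P}$ on $\zn^*$ first to the product law $D_{\tahn}$ and then down to the individual coordinates $1-\widehat{\tau}_{i,z^*_i}$. The chain ``Kullback comparison with $\delta_{\zn^*}$, then Pinsker, then $1-x\le-\log x$'' handles this, but it relies crucially on the exponential control of Proposition~\ref{prop.distrib.conv.zn.2} being uniform in $z^*$, which is also what legitimates the deconditioning in the last step. The $1/n$ rate assumed for $\widetilde{\pi}$ enters only through the applicability of Proposition~\ref{prop.distrib.conv.zn.2}.
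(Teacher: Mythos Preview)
Your proof is correct and follows essentially the same route as the paper: the identity $\widetilde{\alpha}_q=\tfrac1n\sum_i\widehat{\tau}_{i,q}$, the reduction to $\tfrac1n\sum_i(1-\widehat{\tau}_{i,z^*_i})$, the Kullback comparison $K(D_{\tahn},\widetilde P)\le -\log\widetilde P(\zn^*)$, Pinsker, Proposition~\ref{prop.distrib.conv.zn.2}, and the deconditioning argument all match the paper's proof (Lemma~\ref{lem.aposteriori.new.param} packages exactly your Pinsker step). The only cosmetic differences are that the paper bounds $\tfrac1n\sum_i(1-\widehat{\tau}_{i,z^*_i})\le 1-D_{\tahn}(\zn^*)$ directly rather than via $1-x\le-\log x$, and applies Proposition~\ref{prop.distrib.conv.zn.2} with $v_n=1/n$ while you (more carefully) pick an intermediate sequence.
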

The crux of the proof is the use of Proposition~\ref{prop.distrib.conv.zn.2}.

\smallskip

\begin{proof}[Proof of Theorem~\ref{thm.consist.alpha.VE}]
~\\
Let us show that given $Z_{[n]}=\zn^*$,
 \begin{align*}
\abs{ \widetilde{\alpha}_q - N_q(\zn^*)/n }\xrightarrow[n\to \infty]{P^*} 0\enspace.
 \end{align*}

For every $q$,
\begin{align*}
    \widetilde{\alpha}_q & = \frac{1}{n}\sum_{i=1}^n \widetilde{\tau}_{i,q},
\end{align*}
where $\widetilde{\tau}_{i,q}=\tah_{i,q}\paren{\widetilde{\alpha}, \widetilde{\pi}}$ (see~\eqref{def.tau.var}).
Introducing $z_i^*$, it comes that
\begin{align*}
    \widetilde{\alpha}_q & = \frac{1}{n}\sum_{i=1}^n \widetilde{\tau}_{i,z_i^*} \1_{(z_i^*=q)} + \frac{1}{n}\sum_{i=1}^n \widetilde{\tau}_{i,q} \1_{(z_i^*\neq q)} \enspace.
\end{align*}

From~\eqref{def.product.distribution}, let us consider the {\it a posteriori } distribution of $\widetilde{Z}_{[n]}=(\widetilde{Z}_1,\ldots,\widetilde{Z}_n)$ denoted by
\begin{align*}
    D_{\widetilde{\tau}_{[n]}}(\zn) = \P\croch{\widetilde{Z}_{[n]}=\zn \mid \Xn} = \prod_{i=1}^n \widetilde{\tau}_{i,z_i} \enspace.
\end{align*}
Then,
\begin{align*}
    \abs{ \widetilde{\alpha}_q - N_q(\zn^*)/n } & = \abs{ \frac{1}{n}\sum_{i=1}^n \paren{ \widetilde{\tau}_{i,z_i^*}-1 } \1_{(z_i^*=q)} + \frac{1}{n}\sum_{i=1}^n \widetilde{\tau}_{i,q} \1_{(z_i^*\neq q)}}\\
  & \leq  \frac{1}{n}\sum_{i=1}^n \paren{ 1- \widetilde{\tau}_{i,z_i^*} } \1_{(z_i^*=q)}  +  \frac{1}{n}\sum_{i=1}^n \widetilde{\tau}_{i,q} \1_{(z_i^*\neq q)}\\
  & \leq  \frac{1}{n}\sum_{i=1}^n \paren{ 1- \widetilde{\tau}_{i,z_i^*} }\enspace ,
\end{align*}
using that when $z_i^*\neq q$, $\widetilde{\tau}_{i,q}\leq \sum_{q\neq z_i^*} \widetilde{\tau}_{i,q} = 1-\widetilde{\tau}_{i,z_i^*} $.
Hence,
\begin{align*}
    \abs{ \widetilde{\alpha}_q - N_q(\zn^*)/n } & \leq  \frac{1}{n}\sum_{i=1}^n  \P\croch{\widetilde{Z}_{[n]} \neq \zn^* \mid \Xn} = 1- D_{\widetilde{\tau}_{[n]}}(\zn^*) \enspace.
\end{align*}

It remains to show $D_{\widetilde{\tau}_{[n]}}(\zn^*)\xrightarrow[n\to \infty]{P^*} 1$ at a rate which does not depend of $\zn^*$.
Let $\widetilde{P}=\P_{\widetilde{\alpha},\widetilde{\pi}}\paren{\Zn=\cdot\mid \Xn}$ denote the {\it a posteriori} distribution of $\Zn$ with parameters $(\widetilde{\alpha},\widetilde{\pi})$ (Section~\ref{subsec.main.asymptotic.result}).
Since Lemma~\ref{lem.aposteriori.new.param} provides
\begin{align*}
    \abs{ D_{\widetilde{\tau}_{[n]}}(\zn^*) - \widetilde{P}(\zn^*) } \leq \sqrt{-\frac{1}{2} \log\croch{ \widetilde{P}(\zn^*) } } \enspace,
\end{align*}
the conclusion results from another use of Proposition~\ref{prop.distrib.conv.zn.2} applied with $\pih = \widetilde{\pi}$ and $v_n=1/n$.

\end{proof}


 \section{Conclusion} \label{sec.conclusion}

This paper provides theoretical (asymptotic) results about the stochastic block model (SBM) inference. Identifiability of SBM parameters has been proved for directed (and undirected) graphs.
This is typically the setting of real applications such as biological networks.

In particular, asymptotic equivalence between maximum-likelihood and variational estimators is proved, as well as the consistency of resulting estimators (up to an additional assumption for the group proportions).
To the best of our knowledge, these are the first results of this type for variational estimators of the SBM parameters.
Such theoretical properties are essential since they validate the empirical practice which uses variational approaches
as a reliable means to deal with up to several thousands of vertices.

Besides, this work can be seen as a preliminary step toward a deeper analysis of maximum-likelihood and variational estimators of SBM parameters.
In particular a further interesting question is the choice of the number $Q$ of classes in the mixture model.
Indeed it is crucial to develop a \textit{data-driven} strategy to choose $Q$ in order to make the variational approach fully applicable in practice and validate the empirical practice.

\newpage

\appendix

\section{Proof of Theorem~\ref{thm.identifiability2}} \label{appendix.identif.2}
\begin{proof}[Proof of Theorem~\ref{thm.identifiability2}]
~\\
Let us just assume $Q=2$, $n=4$, and that no element of $\alpha$ is zero.

If the coordinates of $r=\pi\alpha$ are distinct, then Theorem~\ref{thm.identifiability1} applies and the desired result follows.

Otherwise, the two coordinates are $r$, $r'$ and $r''$ are not distinct.
Set $r_1=r_2=a$ and $u_i=\alpha_1 r_1^i+ \alpha_2 r_2^i$, for $i\geq 0$.
Let us also define $b=r_1'=r_2'$, and $c=r_1''=r_2''$.
Then, the following equalities hold:
\begin{align*}
a & = \pi_{11}\alpha_1+\pi_{12}\alpha_2=\pi_{21}\alpha_1+\pi_{22}\alpha_2   \enspace,\\
b & =\pi_{11}\alpha_1+\pi_{21}\alpha_2=\pi_{12}\alpha_1+\pi_{22}\alpha_2    \enspace,\\
c & =\pi_{11}^2\alpha_1+\pi_{21}\pi_ {12}\alpha_2=\pi_{12}\pi_{21}\alpha_1+\pi_{22}^2\alpha_2\enspace.
\end{align*}

From $a-b=(\pi_{12}-\pi_{21})\alpha_2=-(\pi_{12}-\pi_{21})\alpha_1$ we deduce $\pi_{12}=\pi_{21}$ and $a=b$.
Then,
\begin{eqnarray*}
\alpha_1\alpha_2(\pi_{11}-\pi_{12})^2 &=& (\alpha_1+\alpha_2)(\alpha_1\pi_{11}^2+\alpha_2\pi_{12}^2)-(\alpha_1\pi_{11}+\alpha_2\pi_{12})^2 \\
               &=& c-a^2 \\
               &=& c-b^2 \\
               &=& \alpha_1\alpha_2(\pi_{22}-\pi_{12})^2\enspace.
\end{eqnarray*}

If $c=a^2$, then $\pi_{11}=\pi_{12}=\pi_{21}=\pi_{22}=a$ and $\alpha$ cannot be found.

If $c\neq a^2$, then $|\pi_{11}-\pi_{12}|=|\pi_{22}-\pi_{12}|\neq0$.
But $\alpha_1(\pi_{11}-\pi_{12})=a-\pi_{12}=b-\pi_{12}=\alpha_2(\pi_{22}-\pi_{12})$ leads to $|\alpha_1|=|\alpha_2|$ and $\alpha_1=\alpha_2=1/2$.
Hence $\pi_{11}=\pi_{22}$.
Then, $\pi_{11}$ and $\pi_{12}$ are the roots of the polynomial $x^2-2ax+2a^2-c$.

At this stage, we need to distinguish between $\pi_{11}$ and $\pi_{12}$.
Let us introduce the probability $d$ that $X_{[n]}$ fits the pattern
\begin{center}
\begin{tabular}{cccc}
. & 1 & . & .  \\
. & . & 1 & .  \\
1 & . & . & . \\
. & . & . & .  \\
\end{tabular}\enspace.
\end{center}
Then, $d=(\pi_{11}^3+3\pi_{11}\pi_{12}^2)/4$ and one can compute $e=\root3\of{d-a^3}=(\pi_{11}-\pi_{12})/2$.
This leads to $\pi_{11}=\pi_{22}=a+e$ and $\pi_{12}=\pi_{21}=a-e$, which yields the conclusion.
\end{proof}

\newpage

\section{Proof of Theorem~\ref{thm.distrib.conv.zn}}\label{append.conv.loi.Z}

\subsection{Preliminaries}
Assuming \eqref{assum.pi.block.diff} holds true, $\pi^*$ can be permutation-invariant (see Section~\ref{sec.equiv.class}).
For this reason, we will consider equivalence classes denoted by $\croch{\zn} = \croch{\zn}_{\pi^*}$ for the label vector $\zn$.

Let us define $\PXn(\zn) = \P\croch{\Zn=\zn\mid \Xn}$ for every label vector $\zn$, and $\PXn\paren{ [\zn] } = \P\paren{[\Zn]=[\zn]\mid \Xn}$ for corresponding class $[\zn]$. 
Since every $\zn'\in\croch{\zn}$ satisfies $\PXn(\zn') = \PXn(\zn)$, it results that 
\begin{align}\label{exp.sum.proba.class.vector}
  \PXn\paren{ [\zn] } = \sum_{\zn'\in [\zn]} \PXn(\zn')  = \abs{[\zn]} \PXn(\zn) \enspace,
\end{align}
where $\abs{[\zn]}$ denotes the cardinality of $[\zn]$.

\subsection{Upper bounding $\P \croch{ \sum_{[\zn] \neq
[\zn^*]}\frac{\P\croch{[\Zn]=[\zn]\mid \Xn}}{\P\croch{[\Zn]=[\zn^*]\mid \Xn}} > t  \mid  Z=z^*}$}

Using $P^*$ instead of $\P\croch{\cdot \mid Z=z^*}$ for simplicity, let us first notice 
\begin{align*}
 \sum_{[\zn] \neq [\zn^*]} \frac{\PXn([\zn])}{\PXn([\zn^*])} & =  \sum_{[\zn] \neq [\zn^*]} \frac{ \sum_{\zn'\in[\zn]}\PXn(\zn)}{\sum_{\zn^0\in[\zn^*]}\PXn( \zn^0 )} \\
& \leq  \sum_{[\zn] \neq [\zn^*]} \sum_{\zn'\in[\zn]} \frac{ \PXn(\zn)}{\PXn(\zn^*)} \\
& = \sum_{\zn \not\in [\zn^*]} \frac{ \PXn(\zn)}{\PXn(\zn^*)} \enspace,
\end{align*}
by \eqref{exp.sum.proba.class.vector} applied to $[\zn^*]$.
Partitioning according to the number $\norm{\zn - \zn^*}_0 = r$ of differences between $\zn$ and $\zn^*$, it comes
\begin{align}\label{eq.class.vector}
  \sum_{[\zn] \neq [\zn^*]} \frac{\PXn([\zn])}{\PXn([\zn^*])} & = \sum_{r=1}^n \sum_{ \small
\begin{array}{c}
\zn \not\in [\zn^*] \\
 \norm{\zn - \zn^*}_0 = r
\end{array}} 
\frac{ \PXn(\zn)}{\PXn([\zn^*])} \enspace\cdot
\end{align}
Note that the number of vectors $\zn$ such that $\zn \not\in [\zn^*]$ and $ \norm{\zn - \zn^*}_0 = r$ is roughly upper bounded by 
${n\choose r} \paren{Q-1}^r$,
this upper bound being reached for instance when  $\pi^*$ is such that $\pi^*_{q,l}\neq \pi^*_{q',l'}$ for every $(q,l)\neq (q',l')$.
Then, a straightforward union bound leads to
\begin{align*}
& P^* \croch{ \sum_{[\zn] \neq [\zn^*]} \frac{\PXn([\zn])}{\PXn([\zn^*])} > t } \\ 
 \leq &\ \sum_{r=1}^n \sum_{\small \begin{array}{c}
                        \zn \not\in [\zn^*]\\
				    \norm{\zn-\zn^*}_0=r
                      \end{array}} 
P^* \croch{
\frac{\PXn(\zn)}{\PXn(\zn^*)} > \frac{t}{n^{r+1}(Q-1)^{r}} }
 \enspace,
\end{align*}
by use of ${n\choose r} \leq n^r$, which is tight enough for our purpose.

\subsection{Upper bounding $\P \left[ \frac{\PXn(\zn)}{\PXn(\zn^*)} >
\frac{t}{n^{r+1}(Q-1)^{r}} \ | \ Z=z^* \right]$}

Let us first notice that for every vectors $\zn$ and $\zn^*$,
\begin{align} 
    & \log \paren{\frac{\PXn(\zn)}{\PXn(\zn^*)}} - \E^{Z=z^*}\croch{\log \paren{\frac{\PXn(\zn)}{\PXn(\zn^*)}}} \nonumber \\
= &\ \sum_{i\neq j} \acc{\paren{X_{i,j}-\pi^*_{z^*_i,z^*_j}}\log\croch{\frac{\pi^*_{z_i,z_j}\paren{1-\pi^*_{z^*_i,z^*_j}}}{\pi^*_{z^*_i,z^*_j}\paren{1-\pi^*_{z_i,z_j}}}}}\enspace. \label{eq.sum.indep.rand.var}
\end{align}
Note that for any vector $\zn$ such that $\pi^*_{z_i,z_j} \in \acc{0,1}$ and $\pi^*_{z^*_i,z^*_j}\neq \pi^*_{z_i,z_j}$, $\log \paren{\frac{\PXn(\zn)}{\PXn(\zn^*)}} = -\infty$ and $\PXn(\zn)=0$. Then such vector $\zn$ can be removed from the sum in Eq.~\eqref{eq.class.vector}.

Second for any vectors $\zn$ and $\zn'$, let us further define 
\begin{align*}
  D(\zn,\zn') =   \acc{(i,j)\mid i\neq j,\ \pi^*_{z_i,z_j}\neq \pi^*_{z'_i,z'_j}}  \enspace,
\end{align*}
where $z_i$ and $z'_j$ respectively refer to the $i-$th (resp. $j-$th) coordinate of vector $\zn$ (resp. $\zn'$).
Note that $D(\zn,\zn')$ remains unchanged if $\zn$ and $\zn'$ are replaced by any representatives of their respective classes.

If $N_r(\zn) = \abs{ D(\zn,\zn^*) }$ denotes the number of terms in the sum of Eq.~\eqref{eq.sum.indep.rand.var}, then
\begin{align*}
 & P^* \croch{ \frac{\PXn(\zn)}{\PXn(\zn^*)} >
\frac{t}{n^{r+1}(Q-1)^{r}} } \\
= &\  P^* \croch{ \log \frac{\PXn(\zn)}{\PXn(\zn^*)} > \log \paren{\frac{t}{n^{r+1}(Q-1)^{r}}} } \\
= &\ P^* \left\{ \frac{1}{N_r(\zn)} \paren{ \log \frac{\PXn(\zn)}{\PXn(\zn^*)} - \E^{Z=z^*}
\croch{ \log \frac{\PXn(\zn)}{\PXn(\zn^*)} } } >  \right.\\
& \left. \hspace*{.5cm} \frac{1}{N_r(\zn)} \paren{ \log
\frac{t}{n^{r+1}(Q-1)^{r}} -\E^{Z=z^*}
\croch{ \log \frac{\PXn(\zn)}{\PXn(\zn^*)} }}  \right\}  \enspace.
\end{align*}
Finally, Hoeffding's inequality (Proposition~\ref{prop.Hoeffding.inequality}) applied with $a_{ij}=-b_{ij}=\log \croch{ \paren{1-\zeta}^2 \zeta^{-2}}$ (see Lemma~\ref{lem.Hoeffding.bound} and \eqref{assum.pi.trunc}), and $L=2(b_{i,j}-a_{i,j})$ provides for any $s>0$,
\begin{align*}
    & P^* \croch{\frac{1}{N_r(\zn)} \paren{\log \frac{\PXn(\zn)}{\PXn(\zn^*)} - \E^{Z=z^*}
\croch{ \log \frac{\PXn(\zn)}{\PXn(\zn^*)}}} > s } \\
  & \hspace*{8.5cm} \leq \mathrm{exp}\paren{\frac{- N_r(\zn) s^2}{L^2}}\enspace.
\end{align*}

\subsection{Conclusion}
One then apply this last inequality with a particular choice of $s$:
\begin{align*}
    s = \frac{1}{N_r(\zn)} \paren{ \log
\frac{t}{n^{r+1}(Q-1)^{r}} -\E^{Z=z^*}
\croch{ \log \frac{\PXn(\zn)}{\PXn(\zn^*)} }}\enspace,
\end{align*}
which leads to
\begin{align*}
    s =  \frac{\log t-(r+1)\log(n) -r\log(Q-1)}{N_r(\zn)} -\frac{1}{N_r(\zn)}\E^{Z=z^*}
\croch{ \log \frac{\PXn(\zn)}{\PXn(\zn^*)} }\enspace.
\end{align*}
With Lemma~\ref{lem.Hoeffding.Expectation}, it is not difficult to show that for large enough values of $n$,
\begin{align*}
s^2 \geq \frac{3}{4}\paren{\frac{1}{N_r(\zn)}\E^{Z=z^*}
\croch{ \log \frac{\PXn(\zn)}{\PXn(\zn^*)} }}^2
\geq  \frac{3}{4}\, (c^*)^2
\enspace,
\end{align*}
and that
\begin{align*}
\mathrm{exp}\paren{\frac{-N_r(\zn) s^2}{L^2}} \leq \mathrm{exp}\paren{\frac{-3 N_r(\zn) (c^*)^2}{4 L^2}} \enspace.
\end{align*}
Using Proposition~\ref{A2}, it results that
\begin{align*}
  N_r(\zn) \geq \frac{\gamma^2}{2} n \norm{\zn-\zn^*}_0 = \frac{\gamma^2}{2} n r \enspace,
\end{align*}
and
\begin{align*}
\ P^* \croch{ \sum_{[\zn] \neq [\zn^*]} \frac{\PXn([\zn])}{\PXn([\zn^*])} > t }
 \leq & \sum_{r=1}^n {n \choose r} (Q-1)^r \exp\paren{- \frac{3 (\gamma c^*)^2 }{8 L^2} n r } \\
 = & \sum_{r=1}^n {n \choose r} \croch{(Q-1) u_n}^r
\enspace ,
\end{align*}
where $u_n=\exp\paren{- \frac{3 (\gamma c^*)^2 }{8 L^2} n }$.

Finally for every $t>0$, $ \log(1+x) \leq x$ for every $x\geq 0$ implies
\begin{align*}
\P \croch{ \sum_{[\zn] \neq [\zn^*]} \frac{\PXn([\zn])}{\PXn([\zn^*])} > t \mid Z=z^*} & \leq \paren{1+(Q-1) u_n}^n-1 \\
& \leq  e^{(Q-1)n u_n}-1 \xrightarrow[n\to +\infty]{}0 \enspace,
\end{align*}
since $n u_n \to 0$ as $n\to +\infty$.
Further noticing that the upper bound does not depend on $\zn^*$, the same result holds with $P^*$ replaced by $\P$.

\subsection{Hoeffding's inequality and related lemmas}
\begin{prop}[Hoeffding's inequality] \label{prop.Hoeffding.inequality}
Let $\acc{Y_{i,j}}_{1\leq i\neq j\leq n}$ independent random variables such that for every $i\neq j$, $Y_{i,j}\in [a_{i,j},b_{i,j}]$ almost surely.
Then, for any $t>0$,
\begin{align*}
    \P\croch{\sum_{i\neq j}^n \paren{Y_{i,j}-\E\croch{Y_{i,j}}} > t} \leq \mathrm{exp}\paren{\frac{-t^2}{\sum_{i\neq j}(b_{i,j}-a_{i,j})^2}}\enspace.
\end{align*}
\end{prop}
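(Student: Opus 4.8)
The plan is to prove Proposition~\ref{prop.Hoeffding.inequality} by the classical Chernoff--Cram\'er method combined with Hoeffding's lemma. Since the family $\acc{Y_{i,j}}_{1\le i\neq j\le n}$ is just an independent collection indexed by the $N=n(n-1)$ ordered pairs, I would first relabel it as $Y_1,\dots,Y_N$ and set $W_k=Y_k-\E\croch{Y_k}$, so that each $W_k$ is centred and supported in an interval of length $\ell_k=b_k-a_k$. Writing $V=\sum_{k=1}^N \ell_k^2=\sum_{i\neq j}(b_{i,j}-a_{i,j})^2$, the task reduces to an upper bound on $\P\croch{\sum_{k=1}^N W_k>t}$.

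First I would apply the exponential Markov inequality: for every $s>0$,
\begin{align*}
  \P\croch{\sum_{k=1}^N W_k>t}\le e^{-st}\,\E\croch{e^{s\sum_{k} W_k}}=e^{-st}\prod_{k=1}^N\E\croch{e^{sW_k}},
\end{align*}
the factorisation being precisely the place where independence of the $Y_{i,j}$ enters. The core estimate is then Hoeffding's lemma: a centred random variable $W$ supported in $[c,d]$ satisfies $\E\croch{e^{sW}}\le e^{s^2(d-c)^2/8}$ for every $s$. I would establish this by bounding $e^{sW}$ pointwise by its chord $\frac{d-W}{d-c}e^{sc}+\frac{W-c}{d-c}e^{sd}$ (convexity of the exponential), taking expectations — the term linear in $W$ drops out since $\E W=0$, leaving $\E\croch{e^{sW}}\le e^{\psi(s)}$ where $\psi$ is the log-moment generating function of a two-point mean-zero distribution on $\acc{c,d}$ — and then checking that $\psi(0)=\psi'(0)=0$ while $\psi''(s)\le (d-c)^2/4$ for all $s$, so that Taylor's formula gives $\psi(s)\le s^2(d-c)^2/8$. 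Applying this with $c=a_k-\E Y_k$ and $d=b_k-\E Y_k$ (hence $d-c=\ell_k$) yields $\prod_{k}\E\croch{e^{sW_k}}\le \exp\paren{s^2 V/8}$, and therefore
\begin{align*}
  \P\croch{\sum_{k=1}^N W_k>t}\le \exp\paren{-st+\frac{s^2 V}{8}}.
\end{align*}
Optimising in $s>0$, the choice $s=4t/V$ gives $\exp(-2t^2/V)\le \exp(-t^2/V)$, which is the announced bound (with, in fact, a slightly sharper constant than stated).

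The only step that is not pure bookkeeping is Hoeffding's lemma, and within it the uniform bound $\psi''(s)\le (d-c)^2/4$. Here I would observe that $\psi''(s)$ equals the variance of $W$ under the exponentially tilted law with density proportional to $e^{sW}$; this tilted variable still takes values in $[c,d]$, so Popoviciu's inequality (the variance of a variable with range $d-c$ is at most $(d-c)^2/4$) gives the claim. This is the main obstacle only in the sense that it is the single genuinely analytic ingredient; the reduction to centred variables, the Chernoff bound, the use of independence, and the optimisation over $s$ are all routine. As an alternative one could simply invoke a standard reference for Hoeffding's inequality, but the self-contained argument above is short enough to reproduce.
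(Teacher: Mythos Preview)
Your proof is correct: it is exactly the classical Chernoff--Cram\'er argument combined with Hoeffding's lemma, and your observation that the optimisation actually yields $\exp(-2t^2/V)$, hence a fortiori the stated bound $\exp(-t^2/V)$, is accurate. The paper itself does not give a proof of this proposition at all; it simply records Hoeffding's inequality as a known tool in the appendix and uses it, so there is nothing to compare against beyond noting that your self-contained derivation is the standard one and is more than what the paper supplies.
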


\medskip

\begin{lem}[Values of $a_{i,j}$ and $b_{i,j}$]\label{lem.Hoeffding.bound}
Assuming \eqref{assum.pi.trunc} holds for $\pi^*$ with $\zeta>0$, it comes for every $1 \leq i\neq j \leq n$,
\begin{align*}
    \abs{X_{i,j} \log\croch{\frac{\pi^*_{z_i,z_j}\paren{1-\pi^*_{z^*_i,z^*_j}}}{\pi^*_{z^*_i,z^*_j}\paren{1-\pi^*_{z_i,z_j}}}}} \leq
2\log \croch{ \paren{ \frac{1-\zeta}{\zeta}}}\enspace.
\end{align*}

\end{lem}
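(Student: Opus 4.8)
The plan is to dispose of the trivial case and then reduce the statement to monotonicity of the logarithm on the interval $[\zeta,1-\zeta]$. When $X_{i,j}=0$ the left-hand side is exactly $0$, so it suffices to bound $\abs{\log\croch{\pi^*_{z_i,z_j}\paren{1-\pi^*_{z^*_i,z^*_j}}/\paren{\pi^*_{z^*_i,z^*_j}\paren{1-\pi^*_{z_i,z_j}}}}}$ and then multiply by $X_{i,j}\in\acc{0,1}$, which can only decrease it.

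First I would argue that the four numbers $\pi^*_{z_i,z_j}$, $\pi^*_{z^*_i,z^*_j}$, $1-\pi^*_{z_i,z_j}$, $1-\pi^*_{z^*_i,z^*_j}$ entering the ratio may be assumed to lie in $[\zeta,1-\zeta]$. Indeed, if $\pi^*_{z_i,z_j}=\pi^*_{z^*_i,z^*_j}$ the ratio equals $1$ and its logarithm vanishes; and if $\pi^*_{z_i,z_j}\neq \pi^*_{z^*_i,z^*_j}$ with one of them in $\acc{0,1}$, then the corresponding label vector $\zn$ satisfies $\PXn(\zn)=0$ and has already been removed from the relevant sum (see the remark following \eqref{eq.sum.indep.rand.var}). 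In the remaining case both $\pi^*_{z_i,z_j}$ and $\pi^*_{z^*_i,z^*_j}$ belong to $]0,1[$, hence to $[\zeta,1-\zeta]$ by \eqref{assum.pi.trunc}, and so do their complements to $1$.

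Then I would write the ratio as the product $\paren{\pi^*_{z_i,z_j}/\pi^*_{z^*_i,z^*_j}}\cdot\paren{\paren{1-\pi^*_{z^*_i,z^*_j}}/\paren{1-\pi^*_{z_i,z_j}}}$: each factor is a quotient of two numbers in $[\zeta,1-\zeta]$, hence lies in $[\zeta/(1-\zeta),(1-\zeta)/\zeta]$, so its logarithm has absolute value at most $\log\croch{(1-\zeta)/\zeta}$. Adding the two contributions gives the two-sided bound $-2\log\croch{(1-\zeta)/\zeta}\leq \log\croch{\cdots}\leq 2\log\croch{(1-\zeta)/\zeta}$ — the symmetric form being exactly what is used afterwards to apply Hoeffding's inequality with $a_{i,j}=-b_{i,j}$ — and in particular $\abs{X_{i,j}\log\croch{\cdots}}\leq 2\log\croch{(1-\zeta)/\zeta}$, as claimed. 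There is essentially no obstacle here: the only point requiring a little care is the bookkeeping of the degenerate values $\pi^*_{q,l}\in\acc{0,1}$, which is already handled by the convention introduced before the lemma, and the rest is elementary.
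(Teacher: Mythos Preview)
Your argument is correct and is exactly the elementary computation the paper has in mind; the paper itself states this lemma without proof. The only slightly loose point is the case $\pi^*_{z^*_i,z^*_j}\in\acc{0,1}$ with $\pi^*_{z_i,z_j}\in(0,1)$, which is not covered by the removal convention you cite (that one concerns $\pi^*_{z_i,z_j}\in\acc{0,1}$), but under $P^*$ one then has $X_{i,j}-\pi^*_{z^*_i,z^*_j}=0$ almost surely, so the corresponding summand in \eqref{eq.sum.indep.rand.var} vanishes and the bound is not needed there either.
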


\medskip

\begin{lem}[Bounding the conditional expectation] \label{lem.Hoeffding.Expectation}
Let us assume \eqref{assum.pi.block.diff}, \eqref{assum.pi.trunc}, \eqref{assum.alpha.trunc}, and \eqref{assum.alpha.empirique.trunc} hold true.
Then for every label vectors $\zn$ and $\zn^*$ such that $\norm{\zn-\zn^*}_0=r$ $(1\leq r\leq n)$, there exist positive constants $c^*=c(\pi^*)$ and $C^*=C(\pi^*)$ such that
\begin{align*}
     0<c^* \leq -\frac{1}{N_r(\zn)}\E^{Z=z^*} \croch{ \log \frac{\PXn(\zn)}{\PXn(\zn^*)} } \leq C^*\enspace,
\end{align*}
where $N_r(\zn) = \abs{ \acc{(i,j)\mid i\neq j,\ \pi^*_{z_i,z_j}\neq \pi^*_{z^*_i,z^*_j}} }$.
\end{lem}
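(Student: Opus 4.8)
The quantity to control is the conditional expectation
$-\frac{1}{N_r(\zn)}\E^{Z=z^*}\croch{\log\paren{\PXn(\zn)/\PXn(\zn^*)}}$, which by~\eqref{eq.sum.indep.rand.var} equals
\[
\frac{1}{N_r(\zn)}\sum_{i\neq j}\paren{\pi^*_{z^*_i,z^*_j}-\pi^*_{z_i,z_j}}\log\croch{\frac{\pi^*_{z_i,z_j}\paren{1-\pi^*_{z^*_i,z^*_j}}}{\pi^*_{z^*_i,z^*_j}\paren{1-\pi^*_{z_i,z_j}}}}+\text{(telescoping terms)},
\]
but more cleanly: the summand, before dividing, is the expectation under $X_{i,j}\sim\mathcal{B}(\pi^*_{z^*_i,z^*_j})$ of $\log\paren{\PXn(\zn^*)/\PXn(\zn)}$ restricted to edge $(i,j)$, which is exactly the Kullback--Leibler divergence $K\paren{\mathcal{B}(\pi^*_{z^*_i,z^*_j}),\mathcal{B}(\pi^*_{z_i,z_j})}$. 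So the whole sum is $\sum_{i\neq j}K\paren{\mathcal{B}(\pi^*_{z^*_i,z^*_j}),\mathcal{B}(\pi^*_{z_i,z_j})}$, a sum of nonnegative terms, each of which is $0$ precisely when $\pi^*_{z_i,z_j}=\pi^*_{z^*_i,z^*_j}$, i.e.\ exactly the terms not counted in $N_r(\zn)$. This identification is the conceptual backbone; everything else is quantitative.

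\textbf{Upper bound.} For the terms $(i,j)\in D(\zn,\zn^*)$, both $\pi^*_{z^*_i,z^*_j}$ and $\pi^*_{z_i,z_j}$ lie in $[\zeta,1-\zeta]$ by~\eqref{assum.pi.trunc} (null or unit values contribute $\PXn(\zn)=0$ and were already excised), so each KL term is bounded by a constant $C^*$ depending only on $\zeta$ (hence on $\pi^*$): indeed $K\paren{\mathcal{B}(p),\mathcal{B}(q)}\leq \frac{(p-q)^2}{q(1-q)}\leq \zeta^{-2}$ on that range. Dividing by $N_r(\zn)=\abs{D(\zn,\zn^*)}$ gives the upper bound $C^*$ immediately.

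\textbf{Lower bound — the main obstacle.} Here I must show the \emph{average} KL over the $N_r(\zn)$ ``active'' edges is bounded below by a positive constant independent of $\zn,\zn^*,n$. The difficulty is that an active edge may have $\pi^*_{z_i,z_j}$ very close to $\pi^*_{z^*_i,z^*_j}$, making its individual KL term tiny; I cannot bound each term below. The fix is to discretize: since $\pi^*$ has finitely many entries, the set of attainable pairs $(\pi^*_{a,b},\pi^*_{c,d})$ with $\pi^*_{a,b}\neq\pi^*_{c,d}$ is \emph{finite}, so there is a uniform $c^*=c^*(\pi^*)>0$ with $K\paren{\mathcal{B}(\pi^*_{a,b}),\mathcal{B}(\pi^*_{c,d})}\geq c^*$ whenever the two differ. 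Then $\sum_{(i,j)\in D(\zn,\zn^*)}K\paren{\cdots}\geq c^*\,N_r(\zn)$, i.e.\ the average is $\geq c^*$. I would then check this $c^*$ is exactly the constant appearing in the Conclusion section, and note where~\eqref{assum.pi.block.diff} enters: it guarantees $D(\zn,\zn^*)\neq\emptyset$ whenever $\zn\not\sim\zn^*$, so $N_r(\zn)\geq 1$ and the statement is non-vacuous (the quantitative lower bound $N_r(\zn)\geq\tfrac{\gamma^2}{2}nr$ from Proposition~\ref{A2}, using~\eqref{assum.alpha.empirique.trunc}, is what the calling proof needs, but is not part of this lemma). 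Assumptions~\eqref{assum.alpha.trunc} and~\eqref{assum.alpha.empirique.trunc} are not actually needed for \emph{this} lemma's conclusion as stated — only~\eqref{assum.pi.block.diff} and~\eqref{assum.pi.trunc} — but I would retain them in the statement to match the ambient hypotheses.
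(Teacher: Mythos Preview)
Your KL decomposition for the edge part is correct and matches the paper's argument: on $D(\zn,\zn^*)$ each summand is a Bernoulli KL divergence, and finiteness of the entries of $\pi^*$ gives uniform two-sided bounds over pairs with $\pi^*_{q,l}\neq\pi^*_{q',l'}$. However, you have dropped a term. Recall that $\PXn(\cdot)=\P(\Zn=\cdot\mid\Xn)$ is the \emph{posterior}, so the log-ratio $\log\paren{\PXn(\zn)/\PXn(\zn^*)}$ involves not only the conditional likelihood $\L_1$ but also the prior $\P\croch{\Zn=\zn}=\prod_i\alpha^*_{z_i}$. Hence
\[
-\E^{Z=z^*}\croch{\log\frac{\PXn(\zn)}{\PXn(\zn^*)}}
=\sum_{(i,j)\in D(\zn,\zn^*)}k\paren{\pi^*_{z^*_i,z^*_j},\pi^*_{z_i,z_j}}
-\sum_{i=1}^n\log\frac{\alpha^*_{z_i}}{\alpha^*_{z^*_i}}\,,
\]
and the second sum does not vanish. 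The identity~\eqref{eq.sum.indep.rand.var} you invoke is the \emph{centered} version, in which the $\alpha$ contribution cancels because it is deterministic; it does not cancel in the expectation itself. Your ``(telescoping terms)'' and subsequent ``more cleanly'' sweep this under the rug.

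This is precisely why \eqref{assum.alpha.trunc} and \eqref{assum.alpha.empirique.trunc} are needed, contrary to your final remark. The paper bounds $\bigl|\sum_i\log(\alpha^*_{z_i}/\alpha^*_{z^*_i})\bigr|\le r\log\frac{1-\gamma}{\gamma}$ via \eqref{assum.alpha.trunc} (at most $r$ nonzero summands), and then uses Proposition~\ref{A2}---which rests on \eqref{assum.alpha.empirique.trunc}---to get $N_r(\zn)\ge\tfrac{\gamma^2}{2}nr$, so that
\[
\frac{1}{N_r(\zn)}\Bigl|\sum_i\log\frac{\alpha^*_{z_i}}{\alpha^*_{z^*_i}}\Bigr|
\le\frac{2}{\gamma^2 n}\log\frac{1-\gamma}{\gamma}\xrightarrow[n\to\infty]{}0\,.
\]
The constants are then chosen with slack, $c^*=\tfrac12\min k(\cdot,\cdot)$ and $C^*=2\max k(\cdot,\cdot)$, so that this $\alpha$ correction is absorbed for $n$ large enough. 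Without this step the lower bound can fail: the $\alpha$ term is of order $r/N_r(\zn)$, which is not \emph{a priori} small unless you know $N_r(\zn)\gtrsim nr$.
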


\medskip

\begin{proof}[Proof of Lemma~\ref{lem.Hoeffding.Expectation}]
First,
\begin{align*}
& \E^{Z=z^*} \croch{ \log \frac{\PXn(\zn)}{\PXn(\zn^*)} }\\
= &
\E^{Z=z^*}\croch{   \sum_{i\neq j} \acc{X_{i,j}\log\paren{\frac{\pi^*_{z_i,z_j}}{\pi^*_{z^*_i,z^*_j}}}
+(1-X_{i,j})\log\paren{\frac{1-\pi^*_{z_i,z_j}}{1-\pi^*_{z^*_i,z^*_j}}}}+\sum_i\log\frac{\alpha^*_{z_i}}{\alpha^*_{z^*_i}}}\\
 = & \sum_{i\neq j} \E^{Z=z^*}
\croch{X_{i,j}\log\paren{\frac{\pi^*_{z_i,z_j}}{\pi^*_{z^*_i,z^*_j}}}
+(1-X_{i,j})\log\paren{\frac{1-\pi^*_{z_i,z_j}}{1-\pi^*_{z^*_i,z^*_j}}}}+\sum_i\log\frac{\alpha^*_{z_i}}{\alpha^*_{z^*_i}}\\
= & \sum_{i\neq j}
-\croch{\pi^*_{z^*_i,z^*_j}\log\paren{\frac{\pi^*_{z^*_i,z^*_j}}{\pi^*_{z_i,z_j}}}
+(1-\pi^*_{z^*_i,z^*_j})\log\paren{\frac{1-\pi^*_{z^*_i,z^*_j}}{1-\pi^*_{z_i,z_j}}}}
+\sum_i\log\frac{\alpha^*_{z_i}}{\alpha^*_{z^*_i}}\enspace.
\end{align*}
Note that the first sum in the above expression is actually taken over $(i,j)$ such that $\pi^*_{z_i,z_j}\neq \pi^*_{z^*_i,z^*_j}$.

Second, let us introduce
\begin{align*}
C^* \defegal  \max \acc{2 k\paren{ \pi^*_{q,l},\pi^*_{q',l'}} }\qquad \mathrm{and} \qquad 
c^*  \defegal  \min \acc{ k\paren{ \pi^*_{q,l},\pi^*_{q',l'}}/2 } \enspace,
\end{align*}
where {\it maximum} and {\it minimum} are taken over $\acc{\paren{(q,l),(q',l')} \mid \pi^*_{q,l}\neq \pi^*_{q',l'}}$, and $k(x,y) = x\log (x/y) + (1-x) \log\croch{(1-x)/(1-y)}$ for every $x,y\in(0,1)$.
Then for every $(i,j)$ such that $\pi^*_{z_i,z_j}\neq \pi^*_{z^*_i,z^*_j}$,
\begin{align*}
0 < c^* < k\paren{ \pi^*_{z^*_i,z^*_j}, \pi^*_{z_i,z_j} } < C^*\enspace.
\end{align*}

Third, \eqref{assum.alpha.trunc} implies that
$\abs{\log\frac{\alpha^*_{z_i}}{\alpha^*_{z^*_i}}} \le \log
\frac{1-\gamma}{\gamma}.$
Therefore \eqref{assum.alpha.empirique.trunc} and Proposition~\ref{A2} entail 
\begin{align*}
\frac{1}{N_r(\zn)}
\sum_i\log\frac{\alpha^*_{z_i}}{\alpha^*_{z^*_i}} \leq
\frac{r}{N_r(\zn)} \log \frac{1-\gamma}{\gamma} \leq
\frac{1}{n}
\frac{2}{\gamma^2} \log \frac{1-\gamma}{\gamma} \xrightarrow[n\to
+\infty]{}0 \enspace.  
\end{align*}

The conclusion follows for every $n$ such that
\begin{align*}
  \frac{1}{n}
\frac{2}{\gamma^2} \log \frac{1-\gamma}{\gamma} < \frac{ c^*}{2} < C^* \enspace\cdot 
\end{align*}

\end{proof}

\begin{prop} \label{A2}
Let $\zn$ and $\zn^*$ denote two label vectors.
If \eqref{assum.pi.block.diff} and \eqref{assum.alpha.empirique.trunc} hold true, then
\begin{align*}
\abs{ D(\zn,\zn^*) } \geq \frac{\gamma^2}{2}n \norm{ \zn - \zn^*}_0 \enspace,                                  
\end{align*}
where $ D(\zn,\zn') =  \acc{(i,j)\mid i\neq j,\ \pi^*_{z_i,z_j}\neq \pi^*_{z'_i,z'_j}}$, $\gamma>0$ is the constant given by \eqref{assum.alpha.empirique.trunc}, and $\norm{ \zn - \zn'}_0=\sum_{i=1}^n \1_{(z_i\neq z'_i)}$.
\end{prop}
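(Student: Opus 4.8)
The plan is to bound from below the number of pairs $(i,j)$ at which the $\pi^*$-values differ, in terms of the Hamming distance $\norm{\zn-\zn^*}_0$. First I would fix the set of ``discrepant'' indices $S = \acc{i \mid z_i \neq z^*_i}$, so that $\abs{S} = \norm{\zn-\zn^*}_0 =: m$. The idea is that each index $i \in S$ must contribute many pairs to $D(\zn,\zn^*)$: indeed, since $z_i \neq z^*_i$, Assumption~\eqref{assum.pi.block.diff} guarantees there exists a label $l = l(i)$ such that either $\pi^*_{z_i,l} \neq \pi^*_{z^*_i,l}$ or $\pi^*_{l,z_i} \neq \pi^*_{l,z^*_i}$. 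Consequently, for every index $j \notin S$ (so $z_j = z^*_j$) with $z_j = l(i)$ and $j \neq i$, the pair $(i,j)$ (in the first case) or $(j,i)$ (in the second case) lies in $D(\zn,\zn^*)$, because the only coordinate changing is the $i$-th one.

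The key counting step is then to lower-bound, for each $i \in S$, the number of indices $j \notin S$ with $z^*_j = l(i)$. By Assumption~\eqref{assum.alpha.empirique.trunc}, the class $l(i)$ contains at least $\gamma n$ indices among $\acc{1,\dots,n}$, i.e. $N_{l(i)}(\zn^*) \geq \gamma n$. Of these, at most $\abs{S} = m$ can lie in $S$; so at least $\gamma n - m$ of them lie outside $S$. If $m \leq \gamma n /2$, this is at least $\gamma n/2$ (minus one, to discard $j=i$, which I absorb into constants for $n$ large, or handle by noting $\gamma n - m - 1 \geq \gamma n/2$ once $n$ is large enough relative to the fixed constants); so each $i \in S$ generates at least $\gamma n/2$ pairs in $D(\zn,\zn^*)$. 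To avoid overcounting, I note that a pair $(i,j) \in D(\zn,\zn^*)$ contributed by $i \in S$ via the above argument has $j \notin S$, hence $j$ is never itself a ``contributing index'' in this construction; more carefully, the pairs I count for index $i$ all have their discrepant coordinate equal to $i$, and those counted for $i' \neq i$ have discrepant coordinate $i'$, so the families are disjoint. This yields $\abs{D(\zn,\zn^*)} \geq \tfrac{\gamma n}{2} \cdot m = \tfrac{\gamma}{2} n \norm{\zn-\zn^*}_0$, which is even slightly stronger than claimed; the stated $\gamma^2/2$ leaves comfortable slack, absorbing the range $m > \gamma n/2$ as well (there one argues $\abs{D} \geq c n$ for a single discrepant index and uses $m \leq n$, or simply that the bound with $\gamma^2/2 \leq \gamma/2$ is weaker).

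The main obstacle I anticipate is the bookkeeping to make the disjointness of the counted pair-families watertight and to handle uniformly the two regimes $m \leq \gamma n/2$ and $m > \gamma n/2$, together with the nuisance $j \neq i$ exclusion and the need for $n$ large enough — which is exactly why the statement is phrased with the looser constant $\gamma^2/2$ rather than $\gamma/2$. A secondary subtlety is that $l(i)$ may, a priori, force us to use pairs $(j,i)$ rather than $(i,j)$; since $D(\zn,\zn^*)$ is defined over all ordered pairs $i\neq j$, this is harmless, but one should state it explicitly so the directed nature of the model does not cause confusion.
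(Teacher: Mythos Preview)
Your argument for the regime $m := \norm{\zn-\zn^*}_0 \leq \gamma n/2$ is correct and gives the stronger bound $\abs{D} \geq \tfrac{\gamma}{2}nm$. The problem is the other regime, which you dismiss in one sentence but which is in fact the entire difficulty of the proposition. When $m > \gamma n$, your construction yields nothing: there may be no index $j\notin S$ at all in the class $l(i)$, so the lower bound $m(\gamma n - m)$ is vacuous. Your fallback ``argue $\abs{D}\geq cn$ for a single discrepant index'' also fails, because in general no such $c$ exists. Concretely, take $Q=2$ with the affiliation matrix $\pi^*=\bigl(\begin{smallmatrix}a&b\\b&a\end{smallmatrix}\bigr)$, $a\neq b$, which satisfies \eqref{assum.pi.block.diff}; set $\zn^*=(1,\dots,1,2,\dots,2)$ and let $\zn$ be obtained by swapping \emph{all} labels. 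Then $m=n$ yet $D(\zn,\zn^*)=\emptyset$, so the inequality is violated. If instead you swap all labels but one, then $\zn\notin[\zn^*]$, $m=n-1$, and $\abs{D}=2(n-1)$, which is far below $\tfrac{\gamma^2}{2}n(n-1)$.

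What this shows is that the statement must be read through the equivalence-class convention announced in Section~\ref{sec.equiv.class}: $\zn$ and $\zn^*$ stand for classes under the relation $\stackrel{\pi^*}{\sim}$, and the Hamming distance is implicitly the minimum over representatives. This is precisely what the paper's proof exploits, and it is the piece you are missing. The paper reorders coordinates so that $\zn^*$ has a periodic block structure $(1,\dots,Q,1,\dots,Q,\dots)$, encodes $\zn$ via block maps $\sigma_k:\{1,\dots,Q\}\to\{1,\dots,Q\}$, and then splits into two cases. In the first case every pair of blocks contributes at least one discrepancy to $D$, giving $\abs{D}\gtrsim n_\gamma^2\geq \gamma^2 n^2/2\geq \tfrac{\gamma^2}{2}nm$. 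In the second case some block pairs contribute nothing, which forces the corresponding $\sigma_k$ to be a common permutation $\sigma\in\mathfrak{S}^{\pi^*}$; after passing to the representative $\sigma(\zn^*)$ one bounds $m$ in terms of the number of \emph{non-injective} blocks and recovers the inequality. Your approach never confronts this permutation-invariance phenomenon, and without it the large-$m$ regime cannot be closed.
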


\medskip

\begin{proof}[Proof of Proposition~\ref{A2}]
 ~\\
Since one assumes \eqref{assum.pi.block.diff} holds true, $\pi$ can be \emph{permutation-invariant} (see Example~\ref{ex.permutation.invariant}).
Then, let us define $\pi^{\sigma}=(\pi_{\sigma(q),\sigma(l)})_{1\leq q,l \leq Q}$ with $\sigma$ a permutation on $\acc{1,\ldots,Q}$.
Note that for permutation-invariant matrix $\pi$, 
there exists a permutation $\sigma\neq Id$ on $\acc{1,\ldots,Q}$
such that $\pi^{\sigma}=\pi$.
Then, the following equalities hold
\begin{align*}
D(\zn,\zn') = D\paren{\sigma(\zn),\zn' } = D\paren{\zn,\sigma(\zn') } \enspace,
\end{align*}
with $\sigma (\zn)=\paren{ \sigma(z_1), \sigma(z_2),\ldots,\sigma(z_n) }$. 
Furthermore, neither $\abs{D(\zn,\zn^*)}$ nor $\norm{\zn-\zn^*}_0$ will change if the same permutation is applied to the coordinates of vectors $\zn$ and $\zn^*$.
Then, computing $\abs{D(\zn,\zn^*)}$ can be made by reordering $\zn$ and $\zn^*$.

Assumption \eqref{assum.alpha.empirique.trunc} implies that the number of coordinates of $\zn^*$ that are equal to 1 is at least $n_{\gamma}\defegal \lceil n\gamma \rceil$, where $\lceil n\gamma\rceil$ denotes the first integer larger than $n\gamma$.
The same property holds for every $1\leq q \leq Q$.
Let us use a permutation of the coordinates of $\zn^*$ such that 
\begin{align*}
\zn^*=(1,2,\ldots,Q,1,2,\ldots,Q,\ldots,1,2,\ldots,Q,z^*_{Qn_{\gamma}+1},z^*_{Qn_{\gamma}+2},\ldots,z^*_n) \enspace,
\end{align*}
and apply the same permutation to $\zn$.
For each block $k$ of $Q$ coordinates $\paren{1,\ldots,Q}$ of $\zn^*$, let us introduce a mapping $\sigma_k(\cdot)$ where $k$ denotes the number of the block in $\zn^*$ such that
\begin{align*}
  \forall k,\quad   Q+1\leq i \leq (k+1)Q,\qquad \sigma_k(z^*_i)=z_i \enspace.
\end{align*}
Then it comes 
\begin{align} \label{eq.def.sigma.zn}
  \zn&=(\sigma_1(1),\sigma_1(2),\ldots,\sigma_1(Q),\sigma_2(1),\sigma_2(2),\ldots,\sigma_2(Q),\ldots,
\sigma_{n_{\gamma}}(1),\ldots,\sigma_{n_{\gamma}}(Q),\\
&\quad z_{Qn_{\gamma}+1},z_{Qn_{\gamma}+2},\ldots,z_n) \nonumber \enspace.
\end{align}

Note that this reorganization of $\zn^*$ is not unique. For instance, it is possible to exchange $\sigma_1(3)$ with $\sigma_4(3)$.
Each $\sigma_k$ is a function from $\acc{1,\ldots,Q}$ to
$\acc{1,\ldots,Q}$, which is a permutation provided it is injective.
Let us choose a reorganization of the coordinates of $z^*$ which
minimizes the number of injective $\sigma_k$s.

Besides, 
\begin{align*}
\abs{  D\paren{\zn,\zn^*}}
& \geq \abs{ \acc{(i,j)\mid i\neq j,\ i,j\leq Qn_{\gamma}\ \pi^*_{z^*_i,z^*_j}\neq \pi^*_{z_i,z_j}}  } \\
& = \sum_{k,k'}\abs{ \acc{(i,j)\mid i\neq j,\ i\in I_k,j\in I_{k'},\ \pi^*_{z^*_i,z^*_j}\neq \pi^*_{z_i,z_j}}  } \enspace,
\end{align*}
where $I_k$ denotes the $k-$th block of coordinates of $\zn^*$.
If $k\neq k'$, the requirement that $i\neq j$ is fulfilled.
Otherwise for $k=k'$, it is necessary to require that $z_i^*\neq z_j^*$ since every values in $I_k$ are different.
Let us denote by $B(k,k')=\abs{ \acc{(q,l) \mid \pi^*_{q,l}\neq \pi^*_{\sigma_k(q), \sigma_{k'}(l) } } }$ and by $B(k)=\abs{ \acc{(q,l) \mid q\neq l,\ \pi^*_{q,l}\neq \pi^*_{\sigma_k(q), \sigma_{k}(l) } } }$.
Then, it comes that
\begin{align*}
\abs{  D\paren{\zn,\zn^*} }
& \geq  \sum_{k,k'} \abs{ \acc{(i,j)\mid i\neq j,\ i\in I_k,j\in I_{k'},\ \pi^*_{z^*_i,z^*_j}\neq \pi^*_{z_i,z_j}}  } \\
 & = \sum_{k\neq k'} \abs{ \acc{(q,l)\mid \  \pi^*_{q,l}\neq \pi^*_{\sigma_k(q), \sigma_{k'}(l) }}  } \\ 
 & + \sum_{k} \abs{ \acc{(q,l)\mid q\neq l,\  \pi^*_{q,l}\neq \pi^*_{\sigma_k(q), \sigma_{k}(l) }}  } \\
 & = \sum_{k\neq k'} B(k,k') + \sum_{k} B(k)
\enspace.
\end{align*}
Therefore, lower bounding $ \abs{ D\paren{\zn,\zn^*} }$ amounts to assess the cardinality of $B(k,k')$ and $B(k)$, for $ 1\leq k\neq k' \leq n_{\gamma}$.

Let us now distinguish between two cases:
\begin{enumerate}
  \item either for every $k,k' \in \acc{1,\ldots,n_{\gamma}}$, $B(k,k')+B(k',k)>0$ and $B(k)>0$.

\item or there exist $k,k'$ such that $B(k,k')+B(k',k)=0$ or $B(k)=0$. 
\end{enumerate}

\paragraph{First case:}
In this setting, let $\norm{\zn-\zn^*}_0=r$. Then,
\begin{align*}
\abs{ D(\zn,\zn^*) } & \geq \sum_{k\neq k'} B(k,k') + \sum_{k} B(k) \\
& = \sum_{k<k'} \croch{B(k,k') + B(k',k)} +  \sum_{k} B(k) \\
& \geq \frac{n_{\gamma}(n_{\gamma}-1)}{2} +  n_{\gamma} = \frac{n_{\gamma}(n_{\gamma}+1)}{2} \\
& \geq \frac{n_{\gamma}^2}{2} \geq \frac{n^2 \gamma^2}{2} 
\geq \frac{\gamma^2}{2} n \,r \enspace,
\end{align*}
since $ n_{\gamma} \geq n \gamma $ and $n\geq r$.

\paragraph{Second case:}
Let us assume that there exist $k,k'$ such that $B(k,k')+B(k',k)=0$. (The $B(k)$s will be lower bounded by 0.) 

Then for every such $k,k'$, $\sigma_k$ and $\sigma_{k'}$ are permutations.
Indeed such $k,k'$ lead to
$\pi_{q,l}=\pi_{\sigma_k(q),\sigma_{k'}(l)}=
\pi_{\sigma_{k'}(q),\sigma_k(l)}$, for every $q,l\in \acc{1,\ldots,Q}$.
Assume furthermore that $\sigma_k(q)=\sigma_k(q')$ for some
$q,q' \in \acc{1,\ldots,Q}$.
Then for every $l \in \acc{1,\ldots,Q}$, $\pi_{q,l}=\pi_{\sigma_k(q),\sigma_{k'}(l)}=\pi_{\sigma_k(q'),\sigma_{k'}(l)}=\pi_{q',l}$.
Hence, for every $l\in \acc{1,\ldots,Q}$, $\pi_{q,l}=\pi_{q',l}$, which implies $q=q'$ using \eqref{assum.pi.block.diff}.
Therefore, $\sigma_k$ is injective and thus a permutation of $\acc{1,\ldots,Q}$. 
The same property holds for $\sigma_{k'}$ which is also a permutation of $ \acc{1,\ldots,Q}$.

Furthermore for any such $k,k'$, $\sigma_k=\sigma_{k'}=\sigma$ and $\pi^{\sigma_k}=\pi$, where $\sigma$ denotes a permutation of $\acc{1,\ldots,Q}$.
Indeed if one assumes $\sigma_k \ne \sigma_{k'}$,
then there exists $q\in\acc{1,\ldots,Q}$ such that
$\sigma_k(q)\neq\sigma_j(q)$. 
If it holds, one can interchange coordinates of $\zn$: $\sigma_k(q)$ and $\sigma_{k'}(q)$. 
This results in new mappings $\sigma_k$ and $\sigma_{k'}$ between $\zn^*$ and $\zn$, which are no longer injective.
Then, the number of injective mappings $\sigma_k$ in the writing of $\zn$ decreases by 2 and is no longer minimal as earlier assumed.
This yields $\sigma_k=\sigma_{k'}$ and thus $\pi^{\sigma_k}=\pi$.
Note that the existence of such a unique permutation $\sigma$ results from the fact that
for every $i>Qn_{\gamma}$, $z_i=\sigma_k(z^*_i)$.
Indeed if this was not true, the same reasoning as before applies:
An interchange between $z_i$ and
$\sigma_k(z^*_i)$ would decrease the number of injective
$\sigma_{k}$s in \eqref{eq.def.sigma.zn}, which contradicts our assumption.
As consequences, it also comes that $\pi^{\sigma}=\pi$ and that for every $i>Qn_{\gamma}$,
$z_i=\sigma(z^*_i)$. 

Let $m$ denote the number of non-injective mappings $\sigma_k$. 
Note that for any non-injective mapping $\sigma_k$ ($1\leq k \leq n_{\gamma})$, there exists at least one difference between $\zn$ and $\zn^*$ in the corresponding block $k$.
Then, the number $r$ of differences satisfies 
\begin{align} \label{ineq.difference.injective}
r\leq m \times Q \qquad \Leftrightarrow \qquad \frac{m}{r} \geq \frac{1}{Q} \enspace.                                               
\end{align}

The conclusion results from 
\begin{align*}
\abs{ D(\zn,\zn^*) } & \geq \sum_{k\neq k'} B(k,k') + \sum_{k} B(k) 
\geq \sum_{k\neq k'} B(k,k') \\
& \geq \frac{ n_{\gamma}(n_{\gamma}-1) -  (n_{\gamma}-m) \croch{n_{\gamma}-m - 1 } }{2} \\
& = \frac{   2m n_{\gamma} - m^2 -m }{2} =  \frac{   m n_{\gamma} +  m\croch{ n_{\gamma}- m - 1 }}{2}
\enspace.
\end{align*}

Finally, let us notice that $m\leq n_{\gamma}$, and that $n_{\gamma}-1\geq m = n_{\gamma}$ amounts to say that no injective mapping $\sigma_k$ exists in \eqref{eq.def.sigma.zn}.
However with the same reasoning as before, it means that for every $1\leq k,k' \leq n_{\gamma}$, $B(k,k')+B(k',k)>0$, which contradicts the assumption.
Then, $n_{\gamma}-(m+1)\geq 0$ and 
\begin{align*}
  D(\zn,\zn^*) & \geq  \frac{   n_{\gamma} m }{2} \geq \frac{   \gamma n m} {2} = \frac{\gamma n\, r   m } {2 r}  \geq \frac{\gamma n\, r   } {2 Q}  \geq \frac{\gamma^2 n\, r   } {2 } \enspace.
\end{align*}
by use of \eqref{ineq.difference.injective} and $\gamma \leq 1/Q$ (see Assumption \eqref{assum.alpha.empirique.trunc}).

\end{proof}

\newpage

\section{Proof of Proposition \ref{prop.unif.conv.cond.model}}\label{Appendix.prop.unif.conv.M1}

\begin{proof}[Proof of Proposition \ref{prop.unif.conv.cond.model}]

Let us first recall that
\begin{align*}
    \phi_n\paren{\zn,\pi} & \defegal \frac{1}{n(n-1)} \L_1\paren{\Xn;\zn,\pi}\enspace, \\
    \Phi_n\paren{\zn,\pi} & \defegal \E\croch{\phi_n\paren{\zn,\pi} \mid \zn=\zn^*}\enspace.
\end{align*}
Then,
\begin{align*}
\abs{ \phi_n\paren{\zn,\pi} - \Phi_n \paren{\zn,\pi} } & = \rho_n \abs{\sum_{i\neq j} \paren{ X_{i,j} - \pi^*_{z^*_i,z^*_j} } \log \croch{ \pi_{z_i,z_j} / (1-\pi_{z_i,z_j}) } } \enspace,\\
    & = \rho_n\abs{\sum_{i\neq j} \xi_{ij}\, g\paren{\pi_{z_i,z_j}}} \enspace,
\end{align*}
\sloppy
where  $\rho_n=\croch{n(n-1)}^{-1}$, $\xi_{ij}=X_{i,j} - \pi^*_{z^*_i,z^*_j}$, and $ g(t)=\log(t/(1-t))$, $t\in]0,1[$.
With $g_{i,j}=g\paren{\pi_{z_i,z_j}}$,
let us introduce
\begin{align*}
S_n\paren{g}=\abs{ \sum_{i\neq j} \xi_{ij}\, g_{ij} }\enspace,
\end{align*}
where $g=\acc{g_{i,j}}_{1\leq i\neq j \leq n}$.
Note that on the parameter set $\mathcal{P}$ defined by  \eqref{eq.admissible.set.parameters}, $\abs{\xi_{ij}\, g_{ij}}<+\infty\enspace a.s.$ for every $1\leq i\neq j \leq n$.

The expected control will result from the use of Talagrand's inequality (Theorem~\ref{thm.Talagrand}).
For every $\zn$ and $\epsilon>0$, let us introduce the set 
\begin{align*}
\mathcal{P}(\zn)=\acc{\pi \mid (\zn,\pi) \in \mathcal{P}} \enspace,
\end{align*}
and define the event
\begin{align*}
\Omega_n(\epsilon; \zn) = \acc{ \sup_{\mathcal{P}(\zn)} \rho_n S_n\paren{g} \leq (1+\epsilon) \sqrt{\rho_n} \Lambda + \sqrt{ \rho_n \Gamma^2 x_n}+ \paren{1/\epsilon+1/3} \rho_n \Gamma\, x_n }\enspace,
\end{align*}
where $\Gamma$ and $\Lambda$ are constants respectively defined in Lemmas~\ref{lem.bounds.Talagrand} and~\ref{lem.expect.upper.bound}, and $\acc{x_n}_n$ is a sequence of positive real numbers to be chosen later.
%
Then Theorem~\ref{thm.Talagrand} implies for any $\zn$
\begin{align*}
    P^*\croch{\Omega_n(\epsilon;\zn)^c}\leq e^{-x_n}\enspace.
\end{align*}
\begin{align*}
 & P^*\croch{ \sup_{\mathcal{P}}\abs{\phi_n\paren{\zn,\pi}-\Phi_n\paren{\zn,\pi}} > \eta } \\
\leq\ & \sum_{\zn} P^*\croch{ \acc{\sup_{\mathcal{P}(\zn)} \rho_n S_n(g)  > \eta} \cap \Omega_n(\epsilon;\zn) } + \sum_{\zn} e^{-x_n} \\
\leq\ & \sum_{\zn} P^*\croch{ (1+\epsilon) \sqrt{\rho_n} \Lambda + \sqrt{ \rho_n \Gamma^2 x_n}+ \paren{1/\epsilon+1/3} \rho_n \Gamma\, x_n > \eta } + \sum_{\zn} e^{-x_n}
\enspace.
\end{align*}
Since $\zn$ belongs to a set of cardinality at most $Q^n$, choosing $x_n = n \log(n)$ entails the first sum is equal to 0 for large enough values of $n$, while the second sum converges to 0.

Finally, a quick inspection of the proof shows this convergence is uniform with respect to $\zn^*$, which provides the expected result.
\end{proof}

\bigskip

\begin{thm}[Talagrand]\label{thm.Talagrand}

Let $\acc{Y_{ij}}_{1\leq i\neq j\leq n}$ denote independent centered random variables, and define
\begin{align*}
    \forall g \in \mathcal{G},\quad S_n(g) = \sum_{i\neq j} Y_{ij} g_{ij}\enspace,
\end{align*}
where $\mathcal{G}\subset \R^{n^2}$.
Let us further assume that there exist $b>0$ and $\sigma^2>0$ such that  $\abs{Y_{ij}g_{ij}}\leq b$ for every $(i,j)$, and  $\sup_{g\in\mathcal{G}}\sum_{i\neq j} \Var(Y_{ij}g_{ij})\leq \sigma^2$.
Then, for every $\epsilon>0$, and $x>0$,
\begin{align*}
 \P\croch{\sup_{g} S_n(g)\geq \E\croch{\sup_{g} S_n(g)}(1+\epsilon)+\sqrt{2\sigma^2 x}+ b\paren{1/\epsilon+1/3}x}\leq e^{-x}\enspace.
\end{align*}

\end{thm}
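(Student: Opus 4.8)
The plan is to \emph{derive} Theorem~\ref{thm.Talagrand} from Bousquet's version of Talagrand's concentration inequality for the supremum of an empirical process, in the form available in \citep{Mass_2007}, and then to massage the resulting tail bound into the $\epsilon$-form stated here by a couple of elementary inequalities.

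First I would relabel the $N=n(n-1)$ ordered pairs $(i,j)$, $i\neq j$, by a single index $k\in\acc{1,\dots,N}$, so that for each $g\in\mathcal{G}$ the quantity $S_n(g)=\sum_{i\neq j}Y_{ij}g_{ij}$ reads as an empirical sum $\sum_{k=1}^N Y_k g_k$ of $N$ independent, centred summands $Y_k g_k$. Since for each fixed realisation the map $g\mapsto S_n(g)$ is linear, hence continuous, on $\R^{n^2}$, one may replace $\mathcal{G}$ by a countable dense subset without changing $Z\defegal\sup_{g\in\mathcal{G}}S_n(g)$; this makes $Z$ measurable and puts us into the setting of \citep{Mass_2007}. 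Note also that $|S_n(g)|\le Nb$ under the assumption $|Y_{ij}g_{ij}|\le b$, so $Z$ is bounded and $\E\croch{Z}$ is finite. With $b$ as above and $\sigma^2$ such that $\sup_g\sum_{i\neq j}\Var(Y_{ij}g_{ij})\le\sigma^2$, Bousquet's inequality gives, for every $x>0$,
\begin{align*}
\P\croch{ Z \geq \E\croch{Z} + \sqrt{2vx} + bx/3 } \leq e^{-x}, \qquad v = \sigma^2 + 2b\,\E\croch{Z}\enspace.
\end{align*}

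It then remains to upper bound $\sqrt{2vx}$. By $\sqrt{u+w}\le\sqrt{u}+\sqrt{w}$,
\begin{align*}
\sqrt{2vx}=\sqrt{2\sigma^2 x + 4b\,\E\croch{Z}\,x}\le \sqrt{2\sigma^2 x}+2\sqrt{b\,\E\croch{Z}\,x}\enspace,
\end{align*}
and the arithmetic--geometric mean inequality applied to the product $\E\croch{Z}\cdot bx$ gives $2\sqrt{b\,\E\croch{Z}\,x}\le \epsilon\,\E\croch{Z}+bx/\epsilon$. Substituting these two bounds into the displayed tail inequality and grouping the $\E\croch{Z}$-terms and the $bx$-terms yields exactly
\begin{align*}
\P\croch{ Z \geq (1+\epsilon)\E\croch{Z} + \sqrt{2\sigma^2 x} + b\paren{1/\epsilon+1/3}x } \leq e^{-x}\enspace,
\end{align*}
which is the claim (the case $\E\croch{Z}\le 0$ is only easier, since the term $\epsilon\,\E\croch{Z}$ may then be dropped).

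The only genuinely substantial ingredient is Bousquet's inequality itself, which I would cite as a black box; its proof rests on the entropy method --- a tensorised modified logarithmic Sobolev inequality combined with a Herbst-type argument --- and reproducing it is not the point here. Apart from that, I do not foresee a real obstacle: the reduction to a countable subclass is routine given the continuity of $g\mapsto S_n(g)$, and the remaining steps are the two elementary inequalities above. The one point requiring care is the bookkeeping of constants: one must quote the inequality from \citep{Mass_2007} in precisely the normalisation with variance proxy $v=\sigma^2+2b\,\E\croch{Z}$ and Bernstein term $bx/3$, since another common normalisation would alter the numerical constant $1/\epsilon+1/3$ appearing in the conclusion.
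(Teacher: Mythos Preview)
Your proposal is correct and aligns with the paper's approach: the paper itself offers no argument at all but simply cites \citet{Mass_2007} (p.~170, Eq.~(5.50)) for the result, so your derivation from Bousquet's inequality together with the two elementary bounds is more detailed than what the paper provides, and your remark about quoting the version with variance proxy $v=\sigma^2+2b\,\E[Z]$ and Bernstein term $bx/3$ is exactly the point needed to land on the stated constants.
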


\medskip
\begin{proof}
A proof can be found in \citet{Mass_2007} (p.170, Eq. (5.50)).

\end{proof}

\bigskip

\begin{lem}\label{lem.bounds.Talagrand}
With the same notation as Theorem~\ref{thm.Talagrand}, Assumption \eqref{assum.pi.trunc} entails that there exists $\Gamma(\zeta)>0$ only depending on $\zeta$ such that
    \begin{align*}
        \sup_{\mathcal{P}}\max_{i\neq j} \abs{\xi_{ij}\, g_{ij}} \leq \Gamma, \quad \mathrm{and}\quad
\sup_{\mathcal{P}} \max_{i\neq j} \Var\paren{\xi_{ij}\, g_{ij}} \leq \frac{\Gamma^2}{4}\enspace.
    \end{align*}
\end{lem}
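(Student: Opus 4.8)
The plan is to exhibit the explicit constant $\Gamma(\zeta)=\log\paren{(1-\zeta)/\zeta}$ and to verify the two bounds termwise; the only delicate point is the behaviour at the boundary values $\pi_{z_i,z_j}\in\acc{0,1}$, where $g$ blows up.

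First I would record two elementary facts about $\xi_{ij}=X_{i,j}-\pi^*_{z^*_i,z^*_j}$. Under $P^*$, i.e.\ given $\Zn=\zn^*$, $X_{i,j}$ is Bernoulli with parameter $\pi^*_{z^*_i,z^*_j}$ for $i\neq j$, so $\xi_{ij}$ is centered, $\abs{\xi_{ij}}\leq 1$ almost surely, and $\Var\paren{\xi_{ij}}=\pi^*_{z^*_i,z^*_j}\paren{1-\pi^*_{z^*_i,z^*_j}}\leq 1/4$. Since $g_{ij}=g\paren{\pi_{z_i,z_j}}$ is non-random once $(\zn,\pi)$ is fixed, both quantities in the statement reduce to a bound on $\abs{g_{ij}}$: indeed $\abs{\xi_{ij}\,g_{ij}}\leq\abs{g_{ij}}$ a.s., while $\Var\paren{\xi_{ij}\,g_{ij}}=g_{ij}^2\,\Var\paren{\xi_{ij}}\leq g_{ij}^2/4$.

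Second I would bound $\abs{g_{ij}}$ on the admissible set $\mathcal{P}$. By \eqref{assum.pi.trunc}, every coordinate of $\pi$ lies in $\acc{0,1}\cup[\zeta,1-\zeta]$, so two cases arise. If $\pi_{z_i,z_j}\in[\zeta,1-\zeta]$, then by monotonicity of $t\mapsto t/(1-t)$ one has $\pi_{z_i,z_j}/(1-\pi_{z_i,z_j})\in[\zeta/(1-\zeta),(1-\zeta)/\zeta]$, hence $\abs{g_{ij}}\leq\log\paren{(1-\zeta)/\zeta}=\Gamma$. If instead $\pi_{z_i,z_j}\in\acc{0,1}$, the finiteness constraint $\abs{\Phi_n(\zn,\pi)}<+\infty$ built into the definition \eqref{eq.admissible.set.parameters} of $\mathcal{P}$ forces $\pi^*_{z^*_i,z^*_j}$ to take the \emph{same} boundary value: otherwise the summand $\pi^*_{z^*_i,z^*_j}\log\pi_{z_i,z_j}+(1-\pi^*_{z^*_i,z^*_j})\log(1-\pi_{z_i,z_j})$ appearing in $\Phi_n$ would equal $-\infty$. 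Consequently $X_{i,j}=\pi^*_{z^*_i,z^*_j}$ $P^*$-a.s., so $\xi_{ij}=0$ a.s.\ and the product $\xi_{ij}\,g_{ij}$ (and its variance) vanishes, the apparent indeterminacy $0\cdot(\pm\infty)$ being harmless.

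Combining the two cases gives $\sup_{\mathcal{P}}\max_{i\neq j}\abs{\xi_{ij}\,g_{ij}}\leq\Gamma$ a.s.\ and $\sup_{\mathcal{P}}\max_{i\neq j}\Var\paren{\xi_{ij}\,g_{ij}}\leq\Gamma^2/4$, which is the claim. I do not expect a genuine obstacle here; the only step needing care is the bookkeeping at $\pi_{z_i,z_j}\in\acc{0,1}$, where one must invoke membership in $\mathcal{P}$ — and not merely \eqref{assum.pi.block.diff}--\eqref{assum.pi.trunc} — to discard an otherwise infinite contribution. This is precisely the reason $\mathcal{P}$ was defined with the finiteness condition on $\Phi_n$.
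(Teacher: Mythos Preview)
Your proof is correct and follows essentially the same route as the paper's: both use the finiteness constraint in the definition of $\mathcal{P}$ to handle the boundary case $\pi_{z_i,z_j}\in\acc{0,1}$, and then invoke \eqref{assum.pi.trunc} on the interior. Your version is in fact more carefully written---you give the explicit constant $\Gamma=\log\paren{(1-\zeta)/\zeta}$ and correctly argue that $\xi_{ij}=0$ a.s.\ at the boundary, whereas the paper states somewhat loosely that ``$g_{i,j}=0$'' (presumably meaning the product $\xi_{ij}g_{ij}$ vanishes).
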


\medskip

\begin{proof}
If $\paren{\zn,\pi} \in \mathcal{P}$, then
\begin{align*}
\paren{ \pi_{z_i,z_j}\in \acc{0,1} \Rightarrow \pi_{z^*_i,z^*_j}^*= \pi_{z_i,z_j} }
\Rightarrow \paren{ g_{i,j}=0 } \enspace.
\end{align*}
Then for every $\paren{\zn,\pi} \in \mathcal{P}$, there exists $\Gamma=\Gamma(\zeta)>0$ (Assumption \eqref{assum.pi.trunc})  such that
\begin{align*}
\forall i \neq j, \quad \abs{ \xi_{ij}\, g_{ij}} \leq \Gamma\enspace,
\end{align*}
for every $(\zn , \pi)\in \mathcal{P}$.
This also leads to
\begin{align*}
\forall i \neq j, \quad \Var\paren{ \xi_{ij}\, g_{ij}} \leq \Gamma^2/4 \enspace.
\end{align*}
\end{proof}

\bigskip

\begin{lem}\label{lem.expect.upper.bound} With the same notation as Proposition~\ref{prop.unif.conv.cond.model},
for every $\zn$ such that $ (\zn,\pi) \in \mathcal{P}$, there exists a constant $\Lambda=\Lambda(\zeta)>0$ (Assumption \eqref{assum.pi.trunc}) such that
\begin{align*}
\E\croch{\sup_{\mathcal{P}(\zn)}\rho_n\abs{\sum_{i\neq j} \paren{ X_{i,j} - \pi^*_{z^*_i,z^*j} }g_{ij}} \mid Z=z^*}
& \leq \Lambda \croch{n(n-1)}^{\,-1/2}\enspace.
\end{align*}
\end{lem}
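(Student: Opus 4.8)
The plan is to bound the expected supremum of the empirical process $g \mapsto \rho_n S_n(g) = \rho_n |\sum_{i\neq j} \xi_{ij} g_{ij}|$ over $\mathcal{P}(\zn)$ by a chaining/entropy argument, exploiting that the index set is effectively finite-dimensional. The key structural observation is that although $g = \{g_{i,j}\}_{i\neq j}$ lives a priori in $\R^{n(n-1)}$, each coordinate $g_{i,j} = g(\pi_{z_i,z_j})$ takes only at most $Q^2$ distinct values as $\pi$ ranges over $\mathcal{P}(\zn)$ (one value per pair of block-labels $(z_i,z_j)$), and on $\mathcal{P}$ Assumption~\eqref{assum.pi.trunc} together with Lemma~\ref{lem.bounds.Talagrand} forces $|g_{i,j}| \le \Gamma(\zeta)$. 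So the process is indexed, up to relabeling, by a bounded subset of $\R^{Q^2}$.

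First I would rewrite $S_n(g) = \sum_{q,l} g_{q,l} \, W_{q,l}$ where $W_{q,l} = \sum_{(i,j):\, z_i=q,\, z_j=l,\, i\neq j} \xi_{ij}$, so that $\sup_{\mathcal{P}(\zn)} |S_n(g)| \le \Gamma \sum_{q,l} |W_{q,l}|$ pointwise, since $\sup |g_{q,l}| \le \Gamma$. Taking expectations conditionally on $Z=z^*$ and using Jensen,
\begin{align*}
\E\croch{ \sup_{\mathcal{P}(\zn)} \abs{S_n(g)} \mid Z=z^* } \le \Gamma \sum_{q,l} \E\croch{ \abs{W_{q,l}} \mid Z=z^* } \le \Gamma \sum_{q,l} \sqrt{ \Var\paren{ W_{q,l} \mid Z=z^* } }.
\end{align*}
Now $W_{q,l}$ is a sum of at most $n(n-1)$ independent (given $Z=z^*$) centered terms $\xi_{ij}$, each with variance $\pi^*_{z^*_i,z^*_j}(1-\pi^*_{z^*_i,z^*_j}) \le 1/4$, so $\Var(W_{q,l}\mid Z=z^*) \le n(n-1)/4$. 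Hence the double sum is bounded by $\Gamma \cdot Q^2 \cdot \tfrac12 \sqrt{n(n-1)}$. Multiplying through by $\rho_n = [n(n-1)]^{-1}$ gives the claimed bound with $\Lambda = \Lambda(\zeta) := \Gamma(\zeta) Q^2 / 2$, which depends only on $\zeta$ (and $Q$, fixed).

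I expect the only subtlety — rather than a real obstacle — to be the bookkeeping around the equivalence-class convention for $\zn$ and the verification that on $\mathcal{P}(\zn)$ the coordinates $g_{z_i,z_j}$ are genuinely uniformly bounded by $\Gamma$ even when some $\pi^*_{z^*_i,z^*_j} \in \{0,1\}$; this is precisely what Lemma~\ref{lem.bounds.Talagrand} supplies, since on $\mathcal{P}$ a coordinate $\pi_{z_i,z_j} \in \{0,1\}$ forces $g_{i,j}=0$. A minor alternative, if one prefers not to invoke Jensen coordinatewise, is to bound $\sup_{\mathcal{P}(\zn)} |S_n(g)|$ by $\Gamma \sqrt{Q^2} \, \big(\sum_{q,l} W_{q,l}^2\big)^{1/2}$ via Cauchy--Schwarz and then take expectations, which again yields an $\mathcal{O}(\sqrt{n(n-1)})$ bound; either route gives the stated rate $[n(n-1)]^{-1/2}$ after multiplication by $\rho_n$.
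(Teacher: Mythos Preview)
Your proof is correct and takes a genuinely different, more elementary route than the paper's. The paper proceeds via a standard empirical-process symmetrization: it introduces an independent copy $X'$, passes to Rademacher variables $\epsilon_{ij}$, and then bounds $\E_\epsilon\bigl|\sum_{i\neq j}\epsilon_{ij}X_{i,j}g_{ij}\bigr|$ by the square root of its (conditional) variance. You instead exploit directly the finite-dimensional structure of the index set: since $g_{ij}$ depends on $(i,j)$ only through the block pair $(z_i,z_j)$, the process factors as $S_n(g)=\sum_{q,l}g_{q,l}W_{q,l}$ with $W_{q,l}$ independent of $\pi$, and a crude $\ell^\infty$--$\ell^1$ bound plus a second-moment estimate on each $W_{q,l}$ finishes. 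Your argument is shorter, avoids the symmetrization machinery entirely, and makes the origin of the $[n(n-1)]^{-1/2}$ rate transparent. The only cosmetic cost is that your constant $\Lambda$ carries an extra factor $Q^2$; since $Q$ is fixed throughout the paper this is immaterial. Two minor remarks: the chaining/entropy language in your opening paragraph is never actually used and could be dropped; and the constant you cite as $\Gamma$ is, strictly speaking, the bound on $|\xi_{ij}g_{ij}|$ from Lemma~\ref{lem.bounds.Talagrand}, whereas what you need is a bound on $|g_{ij}|$ alone---but this follows identically from \eqref{assum.pi.trunc} (namely $|g_{ij}|\le\log((1-\zeta)/\zeta)$ whenever $\pi_{z_i,z_j}\notin\{0,1\}$), and you correctly handle the boundary case $\pi_{z_i,z_j}\in\{0,1\}$ in your final paragraph.
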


\medskip

\begin{proof}[Proof of Lemma~\ref{lem.expect.upper.bound}]
Let $\E^*\croch{\cdot}$ denote the expectation given $Z=z^*$.
Then,
\begin{align*}
& \E^* \sup_{\mathcal{P}(\zn)}\rho_n\abs{\sum_{i\neq j} \paren{ X_{i,j} - \pi^*_{z^*_i,z^*_j} }g_{ij}} \\
\leq &
\ \E^*_{X,X'}\croch{\sup_{\mathcal{P}(\zn)} \rho_n \abs{\sum_{i\neq j}\paren{X_{i,j}-X'_{ij}}g_{ij} } }\enspace,
\end{align*}
where the $X'_{i,j}$s are independent random variables with the same distribution as the $X_{i,j}$s.
A symmetrization argument based on Rademacher variables $\acc{ \epsilon_{i,j} }_{1\leq i \neq j \leq n}$ leads to
\begin{align*}
    & \E^* \sup_{\mathcal{P}(\zn)}\rho_n\abs{\sum_{i\neq j} \paren{ X_{i,j} - \pi^*_{z^*_i,z^*_j} }g_{ij}}\\
\leq \ &   2\E^*\croch{\sup_{\mathcal{P}(\zn)} \rho_n \E_{\epsilon}\croch{\abs{\sum_{i\neq j}\epsilon_{ij}X_{i,j}g_{i,j}}} }\enspace,
\end{align*}
where $\E_{\epsilon}[\cdot]$ denotes the expectation with respect to $\epsilon_{i,j}$s.
Then, Jensen's inequality yields
\begin{align*}
    & \E^* \sup_{\mathcal{P}(\zn)}\rho_n\abs{\sum_{i\neq j} \paren{ X_{i,j} - \pi^*_{z^*_i,z^*_j} }g_{ij}}\\
\leq &\, 2\E^*\croch{\sup_{\mathcal{P}(\zn)} \rho_n \sqrt{\mathrm{Var}_{\epsilon}\croch{\sum_{i\neq j}\epsilon_{ij}X_{i,j} g_{ij} }}}\\
\leq & \,
2 \E^*\croch{\sup_{\mathcal{P}(\zn)} \rho_n \sqrt{n(n-1) g^2_{ij} }} \leq  \Lambda(\zeta) \sqrt{\rho_n}\enspace.
\end{align*}

\end{proof}

\newpage

\section{Theorem~\ref{thm.unif.conv}}
\label{appendix.theorem.consist.MLE}

\subsection{Proof of Theorem~\ref{thm.unif.conv}}

\subsubsection{Notation}

For any metric space $(\Theta,d)$ and any real-valued function $f:\ \Theta \to \R$, let us define $\norm{\cdot}_{\Theta}$ by 
\begin{align*}
  \norm{f}_{\Theta} \defegal \sup_{\theta \in\Theta} \abs{ f(\theta) } \enspace.
\end{align*}
Let also $\alpha^*$ and $\pi^*$ be the true values of $\alpha$ and
$\pi$ in SBM (see Section~\ref{subsec.modelSBM}), $\mathcal{A}$ be the set of  stochastic matrices of size $Q$ given by
$\mathcal{A}=\{A=\paren{a_{k, l}}_{1\leq k,l \leq Q} \mid a_{k,l}\geq 0, \ \sum_{l=1}^Q a_{k,l}=1 \}$.

Furthermore, let us introduce the following quantities
\begin{align*}
\phi_n(\pi,\zn) & =  \frac{1}{n(n-1)}\L_1(\Xn;\zn,\pi),\quad
\znh(\pi)   =  \argmax_z \phi_n(\zn,\pi)\enspace,\\
\Phi_n(\pi,\zn) & =  \frac{1}{n(n-1)}\sum_{i\neq j } \pi^*_{z_i^*z_j^*}\log\pi_{z_i,z_j}+(1-\pi^*_{z_i^*z_j^*})\log(1-\pi_{z_i,z_j}) \enspace,\\
\znt(\pi) & =  \argmax_z \Phi_n(\zn,\pi)\enspace,\\
M_n(\alpha,\pi) & =  \frac{1}{n(n-1)}\L_2(\Xn;\alpha,\pi)\enspace,\\
\mathbb{M}(\pi,A) &  =  \sum_{q,l}\alpha^*_q\alpha^*_l \sum_{q'l'}a_{q,q'}a_{l,l'}[\pi^*_{q,l}\log\pi_{q'l'}+
(1-\pi^*_{q,l})\log(1-\pi_{q'l'})]\enspace,\\
 \bar{A}_{\pi} & = \argmax_{A \in \mathcal{A}} \mathbb{M}(\pi,A),\qquad \mathbb{M}(\pi) = \mathbb{M}(\pi,\bar{A}_{\pi}) \enspace.
\end{align*}
Note that $\bar{A}_{\pi}$ is not necessarily unique for every $\pi$. However our analysis only requires unicity of $\bar{A}_{\pi^*}$ and $\bar{A}_{\pi^*}=I_Q$, which is proved in the following reasoning.
Furthermore $\mathbb{M}(\pi^*)=\sum_{q,l}\alpha^*_q\alpha^*_l \mathbb{H^*}_{q,l}$, where $\mathbb{H^*}_{q,l}=\pi^*_{q,l}\log\pi^*_{q,l}+(1-\pi^*_{q,l})\log(1-\pi^*_{q,l})$.

\subsubsection{Proof}

First let us prove $\bar{A}_{\pi^*}$ is unique and $\bar{A}_{\pi^*} = I_Q$. 
Let us assume $\bar{A}_{\pi^*} \neq I_Q$. By definition of $\bar A_{\pi}$, it results
\begin{align*}
0 \leq  &  \mathbb{M}(\pi^*,\bar{A}_{\pi^*})-\mathbb{M}(\pi^*,I_Q)\\
= &   \sum_{q,l}\alpha^*_q\alpha^*_l\sum_{q'l'}\bar{a}_{q,q'}(\pi^*)\bar{a}_{l,l'}(\pi^*)[\pi^*_{q,l}\log\frac{\pi^*_{q'l'}}{\pi^*_{q,l}}+
(1-\pi^*_{q,l})\log\frac{1-\pi^*_{q'l'}}{1-\pi^*_{q,l}}]\enspace\\
= &  - \sum_{q,l}\alpha^*_q\alpha^*_l\sum_{q'l'}\bar{a}_{q,q'}(\pi^*)\bar{a}_{l,l'}(\pi^*) K(\pi^*_{q,l},\pi^*_{q'l'}) \leq 0 \enspace.
\end{align*}
Therefore for every $(q,q',l,l')$, $\bar{a}_{q,q'}(\pi^*)\bar{a}_{l,l'}(\pi^*)K(\pi^*_{q,l},\pi^*_{q'l'})=0$ by \eqref{assum.alpha.trunc}.

Since $\sum_{l'}\bar{a}_{l,l'}(\pi^*)=1$ implies for every $ 1\leq l\leq Q$, there exists $1\leq l'\leq Q$ such that $\overline{a}_{l,l'}(\pi^*) >0$, there exists $f: \acc{1,\ldots,Q}
\to \acc{1,\ldots,Q}$ such that $ \pi^*_{q,l} = \pi^*_{f(q),f(l)} $.
Then
\begin{itemize}
  \item $f$ is a permutation of $\acc{1,\ldots,Q}$ is excluded since we are working up to label switching,

  \item otherwise there exist two indices $q_1$ and $q_2$  ($q_1\neq q_2$) such that rows $q_1$ and $q_2$ of $\pi^*$ are equal and so do the corresponding columns, which is excluded by \eqref{assum.pi.block.diff},
\end{itemize}
which proves the unicity and that $\bar A_{\pi^*}=I_Q$.

\medskip

Second, let us prove that: $\forall \eta>0,\quad \sup_{d(\pi,\pi^*) \ge \eta } \mathbb{M}(\pi)<\mathbb{M}(\pi^*)$.
In the sequel, let $\paren{\bar{a}_{q,l}}_{1\leq q,l \leq Q}$ denote coefficients of $\bar{A}_{\pi}$.
Without further indication, $\bar{a}_{q,l}$ depends on the matrix $\pi$.
Then,
\begin{align*}
\quad & \mathbb{M}(\pi) - \mathbb{M}(\pi^*) \\
= &   \sum_{q,l}\alpha^*_q\alpha^*_l\sum_{q'l'}\bar{a}_{q,q'}\bar{a}_{l,l'}[\pi^*_{q,l}\log\frac{\pi_{q'l'}}{\pi^*_{q,l}}+
(1-\pi^*_{q,l})\log\frac{1-\pi_{q'l'}}{1-\pi^*_{q,l}}]\enspace\\
= &  - \sum_{q,l}\alpha^*_q\alpha^*_l\sum_{q'l'}\bar{a}_{q,q'}\bar{a}_{l,l'}K(\pi^*_{q,l},\pi_{q'l'})\enspace.
\end{align*}
Since $\acc{\pi \mid d(\pi,\pi^*) \geq \eta,\ \eqref{assum.pi.block.diff},\ \eqref{assum.pi.trunc}}$ is a compact set, there exists $\pi^0 \neq \pi^*$ satisfying \eqref{assum.pi.block.diff} and \eqref{assum.pi.trunc} such that
\begin{align*}
\sup_{d(\pi,\pi^*) \ge \eta } \mathbb{M}(\pi) - \mathbb{M}(\pi^*)
= \mathbb{M}(\pi^0) - \mathbb{M}(\pi^*) <0 \enspace.
\end{align*}
Otherwise for every $(q,l)$, $\sum_{q'l'}\bar{a}_{q,q'}\bar{a}_{l,l'}K(\pi^*_{q,l},\pi^0_{q'l'})=0$ would entail by \eqref{assum.alpha.trunc} that for every $(q,l,q',l')$,
$\bar{a}_{q,q'}\bar{a}_{l,l'}K(\pi^*_{q,l},\pi^0_{q'l'})=0$.
It implies that there exists $f: \acc{1,\ldots,Q}
\to \acc{1,\ldots,Q}$ such that $ \pi^*_{q,l} = \pi^0_{f(q),f(l)} $.
The same reasoning as for the unicity of $\bar A_{\pi^*}$ leads to 
\begin{itemize}
  \item if $f$ is a permutation of $\acc{1,\ldots,Q}$: a contradiction arises since $\pi^0 \neq \pi^*$ up to label switching,

  \item otherwise there exist two indices $q_1$ and $q_2$  ($q_1\neq q_2$) such that rows $q_1$ and $q_2$ of $\pi$ are equal and so do the corresponding columns, which is excluded by \eqref{assum.pi.block.diff}.
\end{itemize}

\medskip

Third, let us prove that $\sup_{\alpha,\pi} \abs{M_n(\alpha,\pi)-\mathbb{M}(\pi)} \xrightarrow[n\to +\infty]{\P}0$.
Set
\begin{eqnarray}
|M_n(\alpha,\pi) - \mathbb{M}(\pi)| &
\leq & |M_n(\alpha,\pi) - \phi_n(\pi,\znh)| \label{ineq.term1}\\
   &+& |\phi_n(\pi,\znh) - \Phi_n(\pi,\znt)| \label{ineq.term2} \\
   &+& |\Phi_n(\pi,\znt) - \mathbb{M}(\pi)| \label{ineq.term3}\enspace.
\end{eqnarray}
These three terms are successively controlled in the following.

\paragraph{Upper bound of \eqref{ineq.term1}:}

Lemma \ref{lem.unif.gap.likelihoods.variation} implies that
$\P-a.s.$,
\begin{align*}
\sup_{\alpha,\pi}\abs{M_n(\alpha,\pi)-\phi_n(\pi,\znh)}
& = \sup_{\alpha,\pi}\frac{\abs{\L_2(\Xn; \alpha, \pi)-\L_1(\Xn;
\pi,\znh)}}{n(n-1)}\\
& \leq\ \frac{\log(1/\gamma)}{n-1}
\xrightarrow[n\to\infty]{} 0 \enspace,
\end{align*}

\paragraph{Upper bound of \eqref{ineq.term2}:}

Let us first introduce several quantities.
Set $\Delta(\pi,\znh,\znt) = \abs{\phi_n(\pi,\znh) - \Phi_n(\pi,\znt)}$, $\Delta^+(\pi,\znh,\znt) = \phi_n(\pi,\znh) - \Phi_n(\pi,\znt)$, and $\Delta^-(\pi,\znh,\znt) = - \Delta^+(\pi,\znh,\znt) $. 
Then, it comes
\begin{align*}
P^*\croch{ \sup_{\pi} \De > \eta}
& \leq  P^*\croch{ \sup_{\pi} \acc{\Dm\1_{\Dm> 0} } > \eta }  \\ 
& + P^*\croch{ \sup_{\pi} \acc{\Dp\1_{\Dp\geq 0}}  > \eta} \enspace.
\end{align*}
\begin{enumerate}
  \item If $\Dm> 0$, 
\begin{align*}
    \abs{ \phi_n(\pi,\znh)-\Phi_n(\pi,\znt) } & = \Phi_n(\pi,\znt) - \phi_n(\pi,\znh) \\
& \leq \Phi_n(\pi,\znt) - \phi_n(\pi,\znt) \enspace,
\end{align*}
since $\phi_n(\pi,\znt)\leq \phi_n(\pi,\znh)$. Then, Proposition~\ref{prop.unif.conv.cond.model} leads to
\begin{align*}
    \sup_{\pi} \acc{\Dm\1_{\Dm> 0} } & \leq \sup_{(\zn,\pi)\in\mathcal{P}} \Delta(\pi,\zn,\zn) \\
& \qquad \xrightarrow[n\to +\infty]{\P}0\enspace.
\end{align*}

  \item Otherwise $\Dp\geq 0$, 
\begin{align*}
    \abs{ \phi_n(\pi,\znh)-\Phi_n(\pi,\znt) } & = \phi_n(\pi,\znh) - \Phi_n(\pi,\znt) \enspace.
\end{align*}
Distinguishing between settings where $(\znh,\pi)\in\mathcal{P}$ or not, it results 
\begin{align*}
&  P^*\croch{ \sup_{\pi} \acc{\Dp\1_{\Dp\geq 0}}  > \eta} \\
& \leq  P^*\croch{ \sup_{\pi,\ (\znh,\pi) \in\mathcal{P}} \acc{\Dp\1_{\Dp\geq 0}}  > \eta} \\
& + P^*\croch{ \sup_{\pi,\ (\znh,\pi) \not\in\mathcal{P}} \acc{\Dp\1_{\Dp\geq 0}}  > \eta} \enspace.
\end{align*}
%
\subparagraph{If $(\znh,\pi)\in\mathcal{P}$:}

~\\
$\phi_n(\pi,\znh) > -\infty$ and
	 \begin{align*}
		\abs{ \phi_n(\pi,\znh)-\Phi_n(\pi,\znt) } & \leq \phi_n(\pi,\znh) - \Phi_n(\pi,\znh) \enspace
	 \end{align*}
	 by definition of $\znt$.
According to Proposition~\ref{prop.unif.conv.cond.model},
one gets
\begin{align*}
& \sup_{\pi,\ (\znh,\pi) \in\mathcal{P}} \acc{\Dp\1_{\Dp\geq 0}} \\
& \leq \sup_{(\zn,\pi)\in\mathcal{P}} \Delta(\pi,\zn,\zn)  \quad \xrightarrow[n\to +\infty]{\P}0\enspace.
\end{align*}

\subparagraph{Otherwise $(\znh,\pi)\not\in\mathcal{P}$:}

\begin{align*}
& P^*\croch{ \sup_{\pi,\ (\znh,\pi) \not\in\mathcal{P}} \acc{\Dp\1_{\Dp\geq 0}}  > \eta} \\
& \leq P^*\croch{ \exists \pi,\ \Phi_n(\pi,\znh)=-\infty,\ \Dp>\eta } \enspace.
\end{align*}
Set a sequence $\acc{\epsilon_n}_{n\in\N^*}$ such that $\epsilon_n\to 0$ and $n\epsilon_n\to +\infty$ as $n\to +\infty$. Then,
\begin{align}
& P^*\croch{ \sup_{\pi,\ (\znh,\pi) \not\in\mathcal{P}} \acc{\Dp\1_{\Dp\geq 0}}  > \eta} \nonumber \\
& = P^*\croch{ \exists \pi,\ \Phi_n(\pi,\znh)=-\infty,\ \Dp>\eta,\ N(\znh,\pi) \leq \epsilon_n n(n-1) } \label{ineq.leq}\\
& + P^*\croch{ \exists \pi,\ \Phi_n(\pi,\znh)=-\infty,\ \Dp>\eta,\ N(\znh,\pi) > \epsilon_n n(n-1) } \label{ineq.geq} \enspace,
\end{align}
where $N(\znh,\pi) = \abs{ \acc{ (i,j) \mid i\neq j,\ \pi_{\zh_i,\zh_j} \in \acc{0,1}\ \mathrm{and}\ \pi_{\zh_i,\zh_j} \neq \pi^*_{z^*_i,z^*_j} } }$.

The first term \eqref{ineq.leq} in the right-hand side is dealt with by Proposition~\ref{prop.copy.znh}:
\begin{align*}
& P^*\croch{ \exists \pi,\ \Phi_n(\pi,\znh)=-\infty,\ \Dp>\eta,\ N(\znh,\pi) \leq \epsilon_n n(n-1) } \\
& \leq  P^*\croch{ \exists \pi,\ \Phi_n(\pi,\znh)=-\infty,\ 2 a_n + \Delta(\pi,\znh^P,\znh^P)>\eta,\ N(\znh,\pi) \leq \epsilon_n n(n-1) } \\
& \leq  P^*\croch{  2 a_n + \sup_{(\zn,\pi)\in\mathcal{P}}\Delta(\pi,\zn,\zn)>\eta } \quad \xrightarrow[n\to +\infty]{}0 \enspace,
\end{align*}
following the proof of Proposition~\ref{prop.unif.conv.cond.model}.

The second term \eqref{ineq.geq} is upper bounded by noticing that
\begin{align*}
& \acc{ \phi_n(\pi,\znh)> -\infty } \cap \acc{ \Phi_n(\pi,\znh) = -\infty }\\
= &  \acc{ \sum_{(i,j)\in\M_0 } X_{i,j}=0 } \cap \acc{ \sum_{(i,j)\in\M_1 } (1-X_{i,j})=0 } 
 \enspace,
\end{align*}
where $\mathcal{M}_0 = \acc{ (i,j) \mid i\neq j,\ \pi_{\zh_i,\zh_j}=0 \ \mathrm{and}\ \pi^*_{z^*_i,z^*_j} >0 } $ and $\mathcal{M}_1 = \acc{ (i,j) \mid i\neq j,\ \pi_{\zh_i,\zh_j}=1 \ \mathrm{and}\ \pi^*_{z^*_i,z^*_j} <1 } $.\\ 
Thus,
\begin{align*}
&  P^*\croch{ \exists \pi,\ \Phi_n(\pi,\znh)=-\infty,\ \Dp>\eta,\ N(\znh,\pi) > \epsilon_n n(n-1) } \\
& \leq P^* \croch{  \sum_{k=1 }^{\epsilon_n n(n-1)} Y_k=0 } = (1-\xi)^{ \epsilon_n n(n-1)} \quad \xrightarrow[n\to +\infty]{}0 \enspace,
\end{align*}
where $\acc{Y_k}_{1\leq k \leq \epsilon_n n (n-1)}$ denote \iid Bernoulli variables with parameter $\xi = \min_{(q,l),\ \pi^*_{q,l}\not\in\acc{0,1} } \croch{\pi^*_{q,l} \wedge 1-\pi^*_{q,l} } $, and $a\wedge b = \min(a,b)$.

\end{enumerate}

Finally since no upper bound does depend on $\zn^*$, every convergence in probability with respect to $P^*$ can be turned into a convergence with respect to $\P$.

\paragraph{Upper bound of \eqref{ineq.term3}:}

$\Phi_n(\pi,\zn)$ can be expressed as:
\begin{align} \label{PhiA}
 & \Phi_n(\pi,\zn) \nonumber\\
= &  \sum_{qlq'l'}\frac{N_{qq'}(\zn)N_{ll'}(\zn)}{n(n-1)} \left[
\pi^*_{q,l}\log\pi_{q'l'}+(1-\pi^*_{q,l})\log(1-\pi_{q'l'}) \right]
 \enspace,
\end{align}
where $N_{qq'}(\zn)=\abs{ \acc{ i \mid z^*_i=q,\ \mathrm{and}\ z_i=q' } }$.

Let $\widetilde{N}_{qq'}(\pi)=N_{qq'}(\znt(\pi))$, $N^*_q= \abs{\acc{ i \mid z^*_i=q }}$, $\widetilde{a}_{qq'}(\pi)=\frac{\widetilde{N}_{qq'}(\pi)}{N^*_q}$,
and $\widetilde{A}_{\pi}$ the stochastic matrix of  $\widetilde{a}_{qq'}(\pi)$.
Coefficient $\widetilde{a}_{qq'}(\pi)$ yield the proportion of vertices from class $q$ attributed to class $q'$ by $\zn$.
Note that  \eqref{PhiA} shows that $\Phi_n(\pi,\zn)$ only depends on $\zn$ through the matrix $\widetilde{A}_{\pi}$.
Therefore, one uses the notation $\Phi_n(\pi,A(\zn))$ in place of $\Phi_n(\pi,\zn)$.

Definitions of $\widetilde{A}_{\pi}$ and $\bar{A}_{\pi}$ imply that
$\Phi_n(\pi,\widetilde{A}_{\pi}) \geq \Phi_n(\pi,\bar{A}_{\pi})$ and
$\mathbb{M}(\pi)=\mathbb{M}(\pi,\bar{A}_{\pi}) \geq \mathbb{M}(\pi,\widetilde{A}_{\pi})$.
Therefore,
\begin{enumerate}
   \item
$ \Phi_n(\pi,\widetilde{A}_{\pi}) \geq \mathbb{M}(\pi) $ \\
$\Rightarrow$ \ $0\ \le \Phi_n(\pi,\widetilde{A}_{\pi}) -
\mathbb{M}(\pi) \leq  \Phi_n(\pi,\widetilde{A}_{\pi}) -
\mathbb{M}(\pi,\widetilde{A}_{\pi})$,

   \item
$ \Phi_n(\pi,\widetilde{A}_{\pi}) \le \mathbb{M}(\pi) $ \\
$ \Rightarrow$ \ $0\ \le \mathbb{M}(\pi) -
\Phi_n(\pi,\widetilde{A}_{\pi}) \le \mathbb{M}(\pi,\bar{A}_{\pi}) -
\Phi_n(\pi,\bar{A}_{\pi}) $.
 \end{enumerate}
Then,
\begin{align*}
    \abs{ \Phi_n(\pi, \widetilde{A}_{\pi} )-\mathbb{M}(\pi) } \leq \sup_{A \in \mathcal{A}} \abs{\Phi_n(\pi, A)-\mathbb{M}(\pi,A)}\enspace.
\end{align*}
Moreover for every $A\in\mathcal{A}$,
{\small
\begin{align*}
\quad & \Phi_n(\pi,A)-\mathbb{M}(\pi,A) = \\
 &
   \sum_{qq'll'}\croch{\frac{N^*_q N^*_l}{n(n-1)} - \alpha^*_q \alpha^*_l} a_{qq'}a_{ll'}\croch{ \pi^*_{q,l}\log\pi_{q'l'}+(1-\pi^*_{q,l})\log(1-\pi_{q'l'}) } \enspace.
\end{align*}}
Since any
$\pi_{q'l'}\in\acc{0,1}$ such that
$\pi^*_{q,l} \neq  \pi_{q'l'}$ is excluded, \eqref{assum.pi.trunc} provides
\begin{align*}
\abs{ \pi^*_{q,l}\log\pi_{q'l'}+(1-\pi^*_{q,l})\log(1-\pi_{q'l'}) } \leq \Delta(\zeta) < +\infty \enspace,
\end{align*}
where $\Delta(\zeta)>0$ is independent of $\pi$ and $q$, and only
depends on $\zeta$ from Assumption \eqref{assum.pi.trunc}.

Then, since $0\leq a_{q,l}\leq 1$ for every $(q,l)$, the strong law of large numbers applied to each $N^*_q$ entails that
$\sup_{\pi}\acc{|\Phi_n\paren{\pi,\znt(\pi)} - \mathbb{M}(\pi)|} \xrightarrow[n\to +\infty]{}0 \quad \P-a.s.\enspace.$

\subsection{Proof of Proposition~\ref{prop.copy.znh}}

\begin{prop}[Existence of a copy of $\znh$ in $\mathcal{P}$]
\label{prop.copy.znh} Let $\pi$ be defined as in Section~\ref{subsec.modelSBM} and satisfying \eqref{assum.pi.trunc}.
Let us further assume that there exists a sequence $\acc{\epsilon_n}_{n\in\N^*}$ such that $\epsilon_n\to 0$ and $n\epsilon_n\to +\infty$ as $n\to +\infty$, and
\begin{align*}
\abs{ \acc{ (i,j) \mid i\neq j,\ \pi_{\zh_i,\zh_j} \in \acc{0,1}\ \mathrm{and}\ \pi_{\zh_i,\zh_j} \neq \pi^*_{z^*_i,z^*_j} } } \leq \epsilon_n n(n-1),\quad a.s. \enspace.
\end{align*}
Then, there exist $\zn^{P}\in\mathcal{P}$ and a real sequence $\acc{a_n}_{\N^*}$ such that
\begin{align*}
 0 \leq  \phi_n(\pi,\znh) - \phi_n(\pi,\zn^P) \leq a_n, \ \mathrm{and} \ \abs{\Phi_n(\pi,\znh) - \Phi_n(\pi,\zn^P)} \leq a_n,\quad a.s.\enspace,
\end{align*}
where $a_n \to 0$ as $n\to +\infty$, and $a_n$ does neither depend on $\zn^*$ nor on $\pi$.
  
\end{prop}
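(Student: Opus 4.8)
The plan is to build $\zn^P$ from $\znh$ by a small surgical modification that eliminates exactly the "bad" pairs, i.e.\ the pairs $(i,j)$ with $\pi_{\zh_i,\zh_j}\in\{0,1\}$ and $\pi_{\zh_i,\zh_j}\neq\pi^*_{z^*_i,z^*_j}$, while controlling how much $\phi_n$ and $\Phi_n$ move. First I would identify the set $\mathcal{I}$ of vertices $i$ that are incident to at least one bad pair. Since the number of bad pairs is at most $\epsilon_n n(n-1)$ and each vertex in $\mathcal{I}$ contributes at least one edge to that count, we have $|\mathcal{I}| \leq 2\epsilon_n n$ (each bad pair touches two vertices, but a crude bound $|\mathcal{I}|\le \sqrt{2\epsilon_n n(n-1)}$ or even $|\mathcal I|\le\epsilon_n n(n-1)$ also suffices — all that matters is $|\mathcal{I}|=o(n)$). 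Define $\zn^P$ by relabelling every vertex in $\mathcal{I}$: reassign $z^P_i := z^*_i$ for $i\in\mathcal{I}$, and leave $z^P_i := \zh_i$ for $i\notin\mathcal{I}$. After this change, any pair $(i,j)$ with $\pi_{z^P_i,z^P_j}\in\{0,1\}$ must have both endpoints outside $\mathcal I$ (so $\pi_{z^P_i,z^P_j}=\pi_{\zh_i,\zh_j}$), and such a pair was not bad (otherwise at least one endpoint would lie in $\mathcal I$), hence $\pi_{z^P_i,z^P_j}=\pi^*_{z^*_i,z^*_j}$. This gives $\Phi_n(\pi,\zn^P)>-\infty$, i.e.\ $(\zn^P,\pi)\in\mathcal{P}$ since $\pi$ satisfies \eqref{assum.pi.trunc} and \eqref{assum.pi.block.diff} is a hypothesis on the fixed matrix $\pi$ (it holds or the statement is vacuous for that $\pi$).

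Next I would bound $|\phi_n(\pi,\znh)-\phi_n(\pi,\zn^P)|$ and $|\Phi_n(\pi,\znh)-\Phi_n(\pi,\zn^P)|$. Both $\phi_n$ and $\Phi_n$ are sums over pairs $(i,j)$, normalized by $n(n-1)$, of terms of the form $X_{i,j}\log\pi_{\cdot,\cdot}+(1-X_{i,j})\log(1-\pi_{\cdot,\cdot})$ (resp.\ with $X_{i,j}$ replaced by $\pi^*_{z^*_i,z^*_j}$). Changing $\znh$ into $\zn^P$ only affects those pairs $(i,j)$ with $i\in\mathcal I$ or $j\in\mathcal I$, and there are at most $2|\mathcal I|n$ such pairs. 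On the pairs that still involve a label pair with $\pi_{\cdot,\cdot}\in\{0,1\}$ we must be slightly more careful: by construction those are exactly pairs inside $\zn^P$-support, and the corresponding summand is $\log 1 = 0$ or involves $\log\pi^*_{z^*_i,z^*_j}$ which is finite by \eqref{assum.pi.trunc}; on the remaining affected pairs each summand is bounded in absolute value by a constant $\Delta(\zeta)$ depending only on $\zeta$ (Assumption \eqref{assum.pi.trunc}), because $\log\zeta \le \log\pi_{\cdot,\cdot}\le\log(1-\zeta)$ etc. Hence
\begin{align*}
|\phi_n(\pi,\znh)-\phi_n(\pi,\zn^P)| \le \frac{2|\mathcal I|\,n\cdot \Delta(\zeta)}{n(n-1)} \le \frac{4\epsilon_n\, \Delta(\zeta)\, n^2}{n(n-1)} =: a_n/2,
\end{align*}
and the identical bound holds for $\Phi_n$ (the bound is uniform in $\pi$ and does not see $\zn^*$). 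The inequality $0\le\phi_n(\pi,\znh)-\phi_n(\pi,\zn^P)$ is automatic since $\znh=\argmax_z\phi_n(z,\pi)$. Taking $a_n := 8\epsilon_n\Delta(\zeta) n^2/(n(n-1)) \to 0$ finishes the argument, with $a_n$ depending neither on $\pi$ nor on $\zn^*$.

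The main obstacle, and the only place needing real care, is the treatment of the infinite-valued summands: one has to be sure that after the surgery no pair $(i,j)$ survives with $\pi_{z^P_i,z^P_j}\in\{0,1\}$ and $\pi_{z^P_i,z^P_j}\neq\pi^*_{z^*_i,z^*_j}$, so that $\zn^P$ genuinely lands in $\mathcal P$ and the difference bounds only involve finite quantities. This is why the relabelling sets $z^P_i=z^*_i$ on $\mathcal I$ rather than making an arbitrary change: it guarantees every bad pair is destroyed because every bad pair has an endpoint in $\mathcal I$, and on the block of vertices where $z^P$ agrees with $z^*$, the "0/1" pattern of $\pi$ along those vertices matches $\pi^*$ along $\zn^*$ trivially when both endpoints are in $\mathcal I$, and matches by the non-badness characterization when only the labels outside $\mathcal I$ are involved. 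Everything else is a routine counting-and-boundedness estimate.
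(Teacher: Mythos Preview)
There is a genuine gap. Your bound $|\mathcal I|\le 2\epsilon_n n$ does not follow: having at most $\epsilon_n n(n-1)$ bad pairs only gives $|\mathcal I|\le 2\epsilon_n n(n-1)$, which is useless since $n\epsilon_n\to\infty$; and in fact $|\mathcal I|$ can equal $n$ (e.g.\ $Q=2$, $\pi_{1,2}=0$ the unique extreme entry of $\pi$, $\pi^*$ with no extreme entry, a block of size $\sim\epsilon_n n$ with $\zh_i=1$ and the rest with $\zh_i=2$: every vertex is incident to a bad pair). More importantly, setting $z^P_i:=z^*_i$ on $\mathcal I$ does \emph{not} force $\zn^P\in\mathcal P$. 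Membership requires that $\pi_{z^P_i,z^P_j}\in\{0,1\}$ imply $\pi_{z^P_i,z^P_j}=\pi^*_{z^*_i,z^*_j}$; when both $i,j\in\mathcal I$ this reads $\pi_{z^*_i,z^*_j}\in\{0,1\}\Rightarrow\pi_{z^*_i,z^*_j}=\pi^*_{z^*_i,z^*_j}$, a constraint on the \emph{candidate} matrix $\pi$ that has no reason to hold. Concretely, with $Q=3$, $\pi_{1,2}=0$ the only extreme entry, $\pi^*\equiv 0.5$, and small sets $S_1,S_2$ carrying $(\zh,z^*)=(1,2)$ and $(2,1)$ respectively (all other vertices with $\zh=3$), your relabelling simply swaps the roles of $S_1$ and $S_2$ and leaves exactly as many bad pairs as before.

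The paper's construction is different and exploits the block structure of the bad set. One partitions vertices by the pair $(\zh_i,z^*_i)$, calls a block $(q_1,q_2)$ \emph{large} when $N_{q_1,q_2}>n\sqrt{\epsilon_n}$, and notes that any bad quadruple must involve at least one small block (otherwise that quadruple alone contributes more than $\epsilon_n n^2$ bad pairs). Assumption \eqref{assum.alpha.empirique.trunc} guarantees that for each true label $q_2$ some block $(c(q_2),q_2)$ is large; one then sets $z^P_i:=c(z^*_i)$ --- not $z^*_i$ --- for every vertex $i$ lying in a small block, keeping $z^P_i:=\zh_i$ on large blocks. This makes $(z^P_i,z^*_i)$ large for every $i$, so no bad pair can survive, and at most $Q^2 n\sqrt{\epsilon_n}=o(n)$ vertices are relabelled, which yields $a_n\to 0$ via \eqref{assum.pi.trunc}.
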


\medskip

\begin{proof}[Proof of Proposition~\ref{prop.copy.znh}]
First let us introduce 
\begin{align*}
L = \acc{(q_1,q_2)\in\acc{1,\ldots,Q}^2\mid N_{q_1,q_2} > n\sqrt{\epsilon_n} } \enspace,  
\end{align*}
where 
\begin{align*}
N_{q_1,q_2} = \abs{ \acc{1\leq i \leq n\mid  \zh_i=q_1,\ z^*_i=q_2 } } \enspace.
\end{align*}
For every $1\leq i \leq n$, 
we define $z_i^P$ in the following way:
\begin{enumerate}
  \item $z_i^P=\zh_i$, if $(\zh_i,z^*_i) \in L$,
  \item $z_i^P=c(z^*_i)$, otherwise,
\end{enumerate}
where $1\leq c(z^*_i)\leq Q$ is obtained by applying 
Lemma~\ref{lem.existence.large.class} with $q_2=z^*_i$.\\
Then it results that $(z_i^P,z^*_i)\in L$  for every $1\leq i\leq n$.

Let us now introduce 
\begin{align*}
  \mathcal{N} = \acc{ (q,q',l,l') \in\acc{1,\ldots,Q}^4\mid \pi_{q,l}\in\acc{0,1}\ \mathrm{and}\ \pi^*_{q',l'}\neq \pi_{q,l} } \enspace.
\end{align*}
Then for every couple $(i,j)$, $(z^P_i,z^*_i,z^P_j,z^*_j) \not\in \mathcal{N}$ since $(z^P_i,z^*_i)\in L$ and $(z^P_j,z^*_j)\in L$ thanks to Lemma~\ref{lem.quadruple.pb}.
As a consequence, it comes $\zn^P \in \mathcal{P}$ since
\begin{align*}
\acc{ (i,j) \mid i\neq j,\ \pi_{z^P_i,z^P_j} \in {0,1}\ \mathrm{and}\ \pi_{z^P_i,z^P_j} \neq \pi^*_{z^*_i,z^*_j} }  = \emptyset  \enspace.
\end{align*}

Finally, the conclusion results from \eqref{assum.pi.trunc} by noticing that the number of changes between $\znh$ and $\zn^P$ is at most $Q^2n\sqrt{\epsilon_n}$.

\end{proof}

\bigskip

\begin{lem}\label{lem.existence.large.class}
Set $L = \acc{(q_1,q_2)\in\acc{1,\ldots,Q}^2\mid N_{q_1,q_2} > n\sqrt{\epsilon_n} }$, where 
\begin{align*}
N_{q_1,q_2} = \abs{ \acc{1\leq i \leq n\mid  \zh_i=q_1,\ z^*_i=q_2 } } \enspace.
\end{align*}
With the notation and assumptions of Proposition~\ref{prop.copy.znh}, if \eqref{assum.alpha.empirique.trunc} holds true 
then 
\begin{align*}
  \forall 1\leq q_2\leq Q,\quad \exists 1\leq q_1\leq Q,\quad(q_1,q_2) \in L \enspace.
\end{align*}
\end{lem}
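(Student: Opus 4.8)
The plan is to prove this by a straightforward pigeonhole argument resting entirely on the empirical lower bound~\eqref{assum.alpha.empirique.trunc}. Fix an arbitrary $q_2\in\acc{1,\ldots,Q}$. The first step is to note that the labels assigned by $\znh$ partition the set of vertices whose true label is $q_2$, so that, with $N_q(\zn^*)=\abs{\acc{1\leq i\leq n\mid z^*_i=q}}$ as in~\eqref{assum.alpha.empirique.trunc},
\[
N_{q_2}(\zn^*)=\sum_{q_1=1}^Q N_{q_1,q_2}\enspace.
\]

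Next I would argue by contradiction: suppose $(q_1,q_2)\notin L$ for every $q_1$, that is $N_{q_1,q_2}\leq n\sqrt{\epsilon_n}$ for all $q_1\in\acc{1,\ldots,Q}$. Summing over $q_1$ gives $N_{q_2}(\zn^*)\leq Q\,n\sqrt{\epsilon_n}$. On the other hand, \eqref{assum.alpha.empirique.trunc} provides $N_{q_2}(\zn^*)\geq \gamma n$ for every $n\geq n_0$, hence $\gamma\leq Q\sqrt{\epsilon_n}$. Since $\epsilon_n\to 0$ as $n\to+\infty$ by assumption in Proposition~\ref{prop.copy.znh}, this inequality is violated for $n$ large enough, which is the required contradiction; therefore there exists $q_1$ with $N_{q_1,q_2}>n\sqrt{\epsilon_n}$, i.e.\ $(q_1,q_2)\in L$.

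There is essentially no obstacle here; the only minor points to record are that the conclusion holds for $n$ large enough (namely $n\geq n_0$ and $Q\sqrt{\epsilon_n}<\gamma$), which is harmless since Proposition~\ref{prop.copy.znh} is an asymptotic statement, and that the argument is uniform in $z^*$, since it uses only the realization-independent lower bound $\gamma n$ furnished by~\eqref{assum.alpha.empirique.trunc}.
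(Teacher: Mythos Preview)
Your proof is correct and essentially identical to the paper's: both argue by contradiction, sum $N_{q_1,q_2}$ over $q_1$ to recover $\abs{\acc{i\mid z^*_i=q_2}}$, bound it by $Qn\sqrt{\epsilon_n}$, and invoke \eqref{assum.alpha.empirique.trunc} to obtain the contradiction. Your added remarks about ``$n$ large enough'' and uniformity in $z^*$ are accurate refinements that the paper leaves implicit.
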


\medskip

\begin{proof}[Proof of Lemma~\ref{lem.existence.large.class}]
Otherwise, there exists $q_2$ such that for every $1\leq q_1 \leq Q$, $(q_1,q_2) \not\in L$.
Then,
\begin{align*}
  \abs{ \acc{1\leq i\leq n\mid z^*_i=q_2}} = \sum_{q_1=1}^Q N_{q_1,q_2} \leq Q n\sqrt{\epsilon_n} \enspace,
\end{align*}
which contradicts \eqref{assum.alpha.empirique.trunc}.
\end{proof}

\bigskip

\begin{lem}\label{lem.quadruple.pb}
  With the same notation and assumptions as Lemma~\ref{lem.existence.large.class}, let us introduce
\begin{align*}
  \mathcal{N} = \acc{ (q,q',l,l') \in\acc{1,\ldots,Q}^4\mid \pi_{q,l}\in\acc{0,1}\ \mathrm{and}\ \pi^*_{q',l'}\neq \pi_{q,l} } \enspace.
\end{align*}
Then,
\begin{align*}
  (q,q',l,l') \in\mathcal{N} \quad \Rightarrow\quad (q,q')\not\in L\ \mathrm{or}\ (l,l')\not\in L \enspace.
\end{align*}

\end{lem}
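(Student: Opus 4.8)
The plan is to argue by contraposition: assuming $(q,q') \in L$ and $(l,l') \in L$, I would show that $(q,q',l,l') \notin \mathcal{N}$, i.e.\ that $\pi_{q,l} \in \{0,1\}$ forces $\pi^*_{q',l'} = \pi_{q,l}$. So suppose, toward a contradiction, that $(q,q') \in L$, $(l,l') \in L$, $\pi_{q,l} \in \{0,1\}$, and yet $\pi^*_{q',l'} \neq \pi_{q,l}$.

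The first step is to turn membership in $L$ into a size statement about index sets. Set $I = \{\, 1 \le i \le n \mid \zh_i = q,\ z^*_i = q'\,\}$ and $J = \{\, 1 \le j \le n \mid \zh_j = l,\ z^*_j = l'\,\}$, so that $|I| = N_{q,q'} > n\sqrt{\epsilon_n}$ and $|J| = N_{l,l'} > n\sqrt{\epsilon_n}$ by definition of $L$. For every $i \in I$ and $j \in J$ with $i \neq j$ one has $\pi_{\zh_i,\zh_j} = \pi_{q,l} \in \{0,1\}$ and $\pi_{\zh_i,\zh_j} = \pi_{q,l} \neq \pi^*_{q',l'} = \pi^*_{z^*_i,z^*_j}$; hence each such pair $(i,j)$ belongs to the set whose cardinality is controlled in the hypothesis of Proposition~\ref{prop.copy.znh}, namely $\{\,(i,j) \mid i \neq j,\ \pi_{\zh_i,\zh_j} \in \{0,1\},\ \pi_{\zh_i,\zh_j} \neq \pi^*_{z^*_i,z^*_j}\,\}$, which has at most $\epsilon_n n(n-1)$ elements.

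The second step is to lower bound the number of such pairs and derive a contradiction. If $(q,q') \neq (l,l')$, then $I$ and $J$ are disjoint, so every pair in $I \times J$ has distinct coordinates, and their number is $|I|\,|J| = N_{q,q'} N_{l,l'} > (n\sqrt{\epsilon_n})^2 = n^2\epsilon_n > \epsilon_n n(n-1)$, contradicting the upper bound above. In the remaining case $(q,q') = (l,l')$ (hence $q = l$, $q' = l'$), the admissible pairs are the ordered pairs of distinct elements of $I$, numbering $N_{q,q'}(N_{q,q'}-1)$, and one is left to compare this quantity with $\epsilon_n n(n-1)$ using only $N_{q,q'} > n\sqrt{\epsilon_n}$ together with $n\sqrt{\epsilon_n} = \sqrt{n \cdot n\epsilon_n} \to +\infty$ (which follows from $\epsilon_n \to 0$ and $n\epsilon_n \to +\infty$).

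I expect this ``diagonal'' case $(q,q')=(l,l')$ to be the delicate point: the crude estimate $N_{q,q'}(N_{q,q'}-1) > n\sqrt{\epsilon_n}\,(n\sqrt{\epsilon_n}-1)$ only exceeds $\epsilon_n n(n-1)$ by a lower-order term, so the contradiction there must be squeezed out for $n$ large enough, using integrality of $N_{q,q'}$ (so that $N_{q,q'}$ exceeds $n\sqrt{\epsilon_n}$ by at least one) and $n\sqrt{\epsilon_n}\to\infty$; by contrast the off-diagonal case is immediate. Once the contradiction is reached in both cases, the contrapositive, and hence the lemma, is established.
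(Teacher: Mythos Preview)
Your approach is exactly the paper's: argue by contraposition, assume $(q,q')\in L$ and $(l,l')\in L$, and derive a contradiction with the hypothesis $\abs{\{(i,j)\mid i\neq j,\ \pi_{\zh_i,\zh_j}\in\{0,1\},\ \pi_{\zh_i,\zh_j}\neq\pi^*_{z^*_i,z^*_j}\}}\leq \epsilon_n n(n-1)$ by exhibiting too many bad pairs. The paper's own proof is a single line, ``$N_{q,q'}N_{l,l'}>n^2\epsilon_n$, contradiction'', and does not separate the off-diagonal from the diagonal case at all.

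You are right to flag the diagonal case $(q,q')=(l,l')$ as delicate; in fact the integrality argument you sketch does not close it. Writing $a=n\sqrt{\epsilon_n}$ and taking $a$ just below an integer $m$ (so $N_{q,q'}\geq m$ is the best integrality gives), one gets $N_{q,q'}(N_{q,q'}-1)\geq m(m-1)\approx a^2-a$, whereas the target $\epsilon_n n(n-1)=a^2-n\epsilon_n$; since $a=n\sqrt{\epsilon_n}\gg n\epsilon_n$ when $\epsilon_n\to 0$, the desired strict inequality can fail. So this is a small gap that the paper's one-line proof simply glosses over and that your fix does not repair either. The clean remedy is not integrality but a harmless change of threshold in the definition of $L$: requiring $N_{q_1,q_2}>2n\sqrt{\epsilon_n}$ (say) still makes Lemma~\ref{lem.existence.large.class} go through (the bound $2Qn\sqrt{\epsilon_n}$ is still $o(n)$) and now, even in the diagonal case, $N(N-1)>2n\sqrt{\epsilon_n}(2n\sqrt{\epsilon_n}-1)=4n^2\epsilon_n-2n\sqrt{\epsilon_n}>\epsilon_n n(n-1)$ for $n$ large. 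Apart from this technicality, your argument is complete and matches the paper's.
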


\medskip

\begin{proof}[Proof of Lemma~\ref{lem.quadruple.pb}]
  If $(q,q')\in L\ \mathrm{and}\ (l,l')\in L$, then $N_{q,q'}N_{l,l'}>n^2\epsilon_n$, which contradicts
that 
\begin{align*}
\abs{ \acc{ (i,j) \mid i\neq j,\ \pi_{\zh_i,\zh_j} \in {0,1}\ \mathrm{and}\ \pi_{\zh_i,\zh_j} \neq \pi^*_{z^*_i,z^*_j} } } \leq \epsilon_n n(n-1) \enspace.
\end{align*}
\end{proof}


\newpage

\section{Proof of Proposition~\ref{prop.distrib.conv.zn.2}}\label{append.prop.loi.Z}

\subsection{Proof of Proposition~\ref{prop.distrib.conv.zn.2}}

\paragraph{Preliminaries}

First in the same line as the proof of Theorem~\ref{thm.distrib.conv.zn}, the main 
quantity to deal with is 
\begin{align}
& \log
\frac{\widehat{P}^{\Xn}(\zn)}{\widehat{P}^{\Xn}(\zn^*)} \label{exp.log.ratio} \\
= &
   \sum_{i\neq j}
\acc{X_{i,j}\log\paren{\frac{\widehat{\pi}_{z_i,z_j}}{\widehat{\pi}_{z^*_i,z^*_j}}}
+(1-X_{i,j})\log\paren{\frac{1-\widehat{\pi}_{z_i,z_j}}{1-\widehat{\pi}_{z^*_i,z^*_j}}}}
+\sum_i\log\frac{\widehat{\alpha}_{z_i}}{\widehat{\alpha}_{z^*_i}} \enspace, \nonumber
\end{align}
where $\widehat{P}^{\Xn}(\zn)=\P_{\widehat{\alpha},\widehat{\pi}}\paren{\Zn = \zn \mid \Xn}$ is the same quantity as $P^{\Xn}(\zn)=\P_{\alpha^*,\pi^*}\paren{\Zn = \zn \mid \Xn}$ where $(\alpha^*,\pi^*)$ has been replaced by $(\widehat{\alpha},\pih)$, and $\zn$ and $\zn^*$ denote label vectors such that $\norm{\zn - \zn^*}_0 = r$, with $1\leq r\leq n$.

Second, let us assume that 
\begin{align} \label{constraint.norme.infinie}
  \norm{ \pih - \pi^*}_{\infty} \leq \min\croch{ \zeta,\  \min_{(q,l),\ \pi^*_{q,l}\not\in\acc{0,1}} \acc{\frac{\pi^*_{q,l}(1-\pi^*_{q,l})}{2}} } \enspace,
\end{align}
where $\zeta$ is given by \eqref{assum.pi.trunc}, which is fulfilled on the event 
\begin{align} \label{def.Omega.pi}
    \Omega_n = \acc{ \norm{\widehat{\pi}-\pi^*}_{\infty} \leq v_n} \enspace
\end{align}
for large enough values of $n$ since $v_n = o\paren{\sqrt{\log n}/n}$.
Note that by assumption, $\P\croch{\Omega_n^c}\xrightarrow[n\to \infty]{}0$.
It is also important to notice that the definition of $\pih$  implies that every $\pih_{q,l} \in \acc{0,1}\cup [\zeta,1-\zeta]$ (see \eqref{assum.pi.trunc}), which leads on $\Omega_n$ to 
\begin{align}\label{exp.well.defined.expression}
  \forall (q,l),\quad  \pi^*_{q,l}\in\acc{0,1} \ \Rightarrow\ \pih_{q,l} = \pi^*_{q,l} \enspace.
\end{align}

Finally for a given vector $\zn$, let us introduce the following sets of couples $(i,j)$:
\begin{align}
  D^* = D^*(\zn) & \defegal \acc{ (i,j)\mid i\neq j,\ \pi^*_{z_i,z_j}\neq \pi^*_{z^*_i,z^*_j} } \enspace, \label{exp.ensemble.ind.pistar}\\
  \Dh = \Dh(\zn) & \defegal \acc{ (i,j)\mid i\neq j,\ \pih_{z_i,z_j}\neq \pih_{z^*_i,z^*_j} } \enspace. \label{exp.ensemble.ind.pihat}
\end{align}

\paragraph{Proof}

First, the log-ratio \eqref{exp.log.ratio} can be decomposed into 
the following terms 
\begin{align*}
& \log \frac{\widehat{P}^{\Xn}(\zn)}{\widehat{P}^{\Xn}(\zn^*)} \\
= & \sum_{i\neq j}
\acc{ X_{i,j}\log\paren{\frac{\pi^*_{z_i,z_j}}{\pi^*_{z^*_i,z^*_j}}}
+(1-X_{i,j})\log\paren{\frac{1-\pi^*_{z_i,z_j}}{1-\pi^*_{z^*_i,z^*_j}}} } + \sum_i\log\frac{\widehat{\alpha}_{z_i}}{\widehat{\alpha}_{z^*_i}}  \\
& + \sum_{i\neq j}
\acc{X_{i,j}\log\paren{ \frac{\pih_{z_i,z_j}}{\pi^*_{z_i,z_j}} \frac{\pi^*_{z^*_i,z^*_j}}{\pih_{z^*_i,z^*_j}} }
+(1-X_{i,j})\log\paren{ \frac{1-\pih_{z_i,z_j}}{1-\pi^*_{z_i,z_j}} \frac{1-\pi^*_{z^*_i,z^*_j}}{1-\pih_{z^*_i,z^*_j}} } }
\enspace\cdot
\end{align*}
Second using the definition of $D^*$ and $\Dh$ given by Eq.~\eqref{exp.ensemble.ind.pistar} and Eq.~\eqref{exp.ensemble.ind.pihat}, and 
that
$ \bar D^*\cap \bar \Dh \subset  \acc{(i,j)\mid i\neq j,\ h(X_{i,j};\pih,\pi^*,z_i,z_j,z^*_i,z^*_j)=0} $
where $\bar A$ denotes the complement of any set $A$ and  
\begin{align*}
& h(X_{i,j};\pih,\pi^*,z_i,z_j,z^*_i,z^*_j) \\
=&\ X_{i,j}\log\paren{ \frac{\pih_{z_i,z_j}}{\pi^*_{z_i,z_j}} \frac{\pi^*_{z^*_i,z^*_j}}{\pih_{z^*_i,z^*_j}} }
+(1-X_{i,j})\log\paren{ \frac{1-\pih_{z_i,z_j}}{1-\pi^*_{z_i,z_j}} \frac{1-\pi^*_{z^*_i,z^*_j}}{1-\pih_{z^*_i,z^*_j}} }\enspace,  
\end{align*}
it results
\begin{align*}
& \log \frac{\widehat{P}^{\Xn}(\zn)}{\widehat{P}^{\Xn}(\zn^*)} \\
= & \sum_{(i,j)\in D^*}
\acc{ X_{i,j}\log\paren{\frac{\pi^*_{z_i,z_j}}{\pi^*_{z^*_i,z^*_j}}}
+(1-X_{i,j})\log\paren{\frac{1-\pi^*_{z_i,z_j}}{1-\pi^*_{z^*_i,z^*_j}}} } + \sum_i\log\frac{\widehat{\alpha}_{z_i}}{\widehat{\alpha}_{z^*_i}}  \\
& + \sum_{(i,j)\in D^*\cup\Dh }
\acc{X_{i,j}\log\paren{ \frac{\pih_{z_i,z_j}}{\pi^*_{z_i,z_j}} \frac{\pi^*_{z^*_i,z^*_j}}{\pih_{z^*_i,z^*_j}} }
+(1-X_{i,j})\log\paren{ \frac{1-\pih_{z_i,z_j}}{1-\pi^*_{z_i,z_j}} \frac{1-\pi^*_{z^*_i,z^*_j}}{1-\pih_{z^*_i,z^*_j}} } } \enspace.
\end{align*}
Finally from the following equalities
\begin{align*}
\log \widehat{\pi}_{z_i,z_j} &  =\log \pi^*_{z_i,z_j}+ \log \croch{
1+\frac{\widehat{\pi}_{z_i,z_j}-\pi^*_{z_i,z_j}}{\pi^*_{z_i,z_j}} } \enspace
\end{align*}
and
\begin{align*}
\log (1-\widehat{\pi}_{z_i,z_j}) &  =\log (1-\pi^*_{z_i,z_j})+ \log
\croch{
1-\frac{\widehat{\pi}_{z_i,z_j}-\pi^*_{z_i,z_j}}{1-\pi^*_{z_i,z_j}} } \enspace,
\end{align*}
the last sum can be further split into
\begin{align*}
& \sum_{(i,j)\in D^*\cup\Dh }
\acc{X_{i,j}\log\paren{ \frac{\pih_{z_i,z_j}}{\pi^*_{z_i,z_j}} \frac{\pi^*_{z^*_i,z^*_j}}{\pih_{z^*_i,z^*_j}} }
+(1-X_{i,j})\log\paren{ \frac{1-\pih_{z_i,z_j}}{1-\pi^*_{z_i,z_j}} \frac{1-\pi^*_{z^*_i,z^*_j}}{1-\pih_{z^*_i,z^*_j}} } } \\
= &   \sum_{(i,j)\in D^*\cup\Dh }  \log\croch{
1+\frac{(\widehat{\pi}_{z_i,z_j}-\pi^*_{z_i,z_j})(X_{i,j}-\pi^*_{z_i,z_j})}{\pi^*_{z_i,z_j}(1-\pi^*_{z_i,z_j})}}
\\
& -  \sum_{(i,j)\in D^*\cup\Dh } \log\croch{
1+\frac{(\widehat{\pi}_{z^*_i,z^*_j}-\pi^*_{z^*_i,z^*_j})
(X_{i,j}-\pi^*_{z^*_i,z^*_j})}{\pi^*_{z^*_i,z^*_j}(1-\pi^*_{z^*_i,z^*_j})} }\enspace.
\end{align*}
This leads to
\begin{align*}
& \log \frac{\widehat{P}^{\Xn}(\zn)}{\widehat{P}^{\Xn}(\zn^*)} \\
= & \sum_{(i,j)\in D^*}
\acc{ X_{i,j}\log\paren{\frac{\pi^*_{z_i,z_j}}{\pi^*_{z^*_i,z^*_j}}}
+(1-X_{i,j})\log\paren{\frac{1-\pi^*_{z_i,z_j}}{1-\pi^*_{z^*_i,z^*_j}}} } + \sum_i\log\frac{\widehat{\alpha}_{z_i}}{\widehat{\alpha}_{z^*_i}}  \\
& + \sum_{(i,j)\in D^*\cup\Dh }  \log\croch{
1+\frac{(\widehat{\pi}_{z_i,z_j}-\pi^*_{z_i,z_j})(X_{i,j}-\pi^*_{z_i,z_j})}{\pi^*_{z_i,z_j}(1-\pi^*_{z_i,z_j})}}
\\
& -  \sum_{(i,j)\in D^*\cup\Dh } \log\croch{
1+\frac{(\widehat{\pi}_{z^*_i,z^*_j}-\pi^*_{z^*_i,z^*_j})
(X_{i,j}-\pi^*_{z^*_i,z^*_j})}{\pi^*_{z^*_i,z^*_j}(1-\pi^*_{z^*_i,z^*_j})} } \\
= & T_1 + T_2 - T_3
\enspace.
\end{align*}
Note that \eqref{exp.well.defined.expression} implies for every $1\leq i\neq j \leq n$,
\begin{align*}
\pi^*_{z_i,z_j}\in\acc{0,1} \Rightarrow \log\croch{
1+\frac{(\widehat{\pi}_{z_i,z_j}-\pi^*_{z_i,z_j})(X_{i,j}-\pi^*_{z_i,z_j})}{\pi^*_{z_i,z_j}(1-\pi^*_{z_i,z_j})} } = 0 \enspace.
\end{align*}
In the sequel, the strategy consists in providing successive upper bounds for $T_1$, $T_2$, and $T_3$.

\paragraph{Upper bounding $T_1$}
~\\
The magnitude of $T_1$ is given by a similar argument to that in the proof of Theorem~\ref{thm.distrib.conv.zn}.
Let us consider
\begin{align*}
T_1  =&   \sum_{D^*}
\acc{X_{i,j}\log\paren{\frac{\pi^*_{z_i,z_j}}{\pi^*_{z^*_i,z^*_j}}}
+(1-X_{i,j})\log\paren{\frac{1-\pi^*_{z_i,z_j}}{1-\pi^*_{z^*_i,z^*_j}}}}
+\sum_i\log\frac{\widehat{\alpha}_{z_i}}{\widehat{\alpha}_{z^*_i}} \\
= &  \sum_{D^*}
\acc{\paren{X_{i,j}-\pi^*_{z^*_i,z^*_j}}\log\paren{\frac{\pi^*_{z_i,z_j}}{\pi^*_{z^*_i,z^*_j}}\frac{1-\pi^*_{z^*_i,z^*_j}}{1-\pi^*_{z_i,z_j}}}
 } \\
 & + \sum_{D^*}
\acc{ \pi^*_{z^*_i,z^*_j} \log\paren{\frac{\pi^*_{z_i,z_j}}{\pi^*_{z^*_i,z^*_j}}}
+(1-\pi^*_{z^*_i,z^*_j})\log\paren{\frac{1-\pi^*_{z_i,z_j}}{1-\pi^*_{z^*_i,z^*_j}}}} + \sum_i\log\frac{\widehat{\alpha}_{z_i}}{\widehat{\alpha}_{z^*_i}} \\
= & \ T_{1,1} + T_{1,2}
\enspace.
\end{align*}
Then for every $t\in\R$,
\begin{align*}
    P^*\croch{ T_1 > t } = P^*\croch{ T_{1,1}+T_{1,2} > t } \enspace.
\end{align*}

\subparagraph{Upper bound of $T_{1,2}$:}
The same proof as that of Lemma~\ref{lem.Hoeffding.Expectation} shows there exists a constant $K(\pi^*) = K^* >0$ such that
\begin{align*}
T_{1,2} \paren{\abs{ D^* }}^{-1} 
 \leq &\ \max_{  (q,l)\neq (q',l'), \pi^*_{q,l}\not \in \acc{0,1}}   - k\paren{\pi^*_{q,l},\pi^*_{q',l'}} =  - K^* <0 \enspace,
\end{align*}
for large enough values of $n$, where $ k\paren{\pi^*_{q,l},\pi^*_{q',l'}} = \pi^*_{q,l}\log\paren{\frac{\pi^*_{q,l}}{\pi^*_{q',l'}}}
+(1-\pi^*_{q,l})\log\paren{\frac{1-\pi^*_{q,l}}{1-\pi^*_{q',l'}}}$ and $\abs{ D^* }$ denotes the cardinality of $D^*$.
Thus,
\begin{align*}
    P^*\croch{ T_1 > t } \leq P^*\croch{ T_{1,1} - \abs{ D^* } K^* > t } \enspace.
\end{align*}

\subparagraph{Upper bound of $T_{1,1}$:}

\begin{align*}
 & P^*\croch{ T_1 > t } \\
 \leq &\ P^* \croch{ \sum_{(i,j)\in D^*}
\paren{X_{i,j}-\pi^*_{z^*_i,z^*_j}}\log\paren{\frac{\pi^*_{z_i,z_j}}{\pi^*_{z^*_i,z^*_j}}\frac{1-\pi^*_{z^*_i,z^*_j}}{1-\pi^*_{z_i,z_j}}}  >t  + \abs{D^*} K^* } \enspace.
\end{align*}
Hoeffding's inequality associated with \eqref{assum.pi.trunc} provides a constant $C_{\zeta}>0$ such that for every $t\in\R$
\begin{align*}
& P^* \croch{ \sum_{(i,j)\in D^*}
\paren{X_{i,j}-\pi^*_{z^*_i,z^*_j}}\log\paren{\frac{\pi^*_{z_i,z_j}}{\pi^*_{z^*_i,z^*_j}}\frac{1-\pi^*_{z^*_i,z^*_j}}{1-\pi^*_{z_i,z_j}}}  >t  + \abs{D^*} K^* } \\
\leq &  \exp \croch{- \frac{ \abs{D^*}^2 (K^*)^2 + 2t \abs{D^*} K^* }{\abs{D^*} C_{\zeta}}} = \exp\croch{-2t \frac{K^*}{C_{\zeta}}} \cdot \exp \croch{- \abs{D^*} \frac{  (K^*)^2}{C_{\zeta}} } \enspace.
\end{align*}

\paragraph{Upper bounding $T_2$}
~\\
With $t>0$ on the event $\acc{ T_2 >t}$, $\log(1+x)\leq x $ for every $x>-1$ leads to
\begin{align*}
0 < t < T_2  \leq \sum_{(i,j)\in\Dh\cup D^*} 
\frac{(\widehat{\pi}_{z_i,z_j}-\pi^*_{z_i,z_j})(X_{i,j}-\pi^*_{z_i,z_j})}{\pi^*_{z_i,z_j}(1-\pi^*_{z_i,z_j})} \enspace \cdot 
\end{align*}
Then with $N_{q',l'}^{q,l}=\sum_{(i,j)\in\Dh\cup D^*} \1_{(z_i^*=q',z_j^*=l')}\1_{(z_i=q,z_j=l)}$, it comes
\begin{align*} 
T_2 & \leq   \sum_{q,l} \abs{ \frac{(\widehat{\pi}_{q,l}-\pi^*_{q,l})}{\pi^*_{q,l}(1-\pi^*_{q,l})} \sum_{ \tiny \begin{array}{c}
                           (i,j)\in\Dh\cup D^*\\
z_i=q,\ z_j=l
                         \end{array}
}
 \paren{ X_{i,j}-\pi^*_{q,l} } } \\
& \leq   \sum_{q,l}  \abs{\frac{(\widehat{\pi}_{q,l}-\pi^*_{q,l})}{\pi^*_{q,l}(1-\pi^*_{q,l})} } \abs{\sum_{q',l'} \sum_{ \tiny \begin{array}{c}
                           (i,j)\in\Dh\cup D^* \\
(z^*_i,z^*_j)=(q',l')\\
(z_i,z_j)=(q,l)
                         \end{array}
} \paren{ X_{i,j}-\pi^*_{q',l'} } } \\
& + \sum_{q,l} \abs{ \frac{(\widehat{\pi}_{q,l}-\pi^*_{q,l})}{\pi^*_{q,l}(1-\pi^*_{q,l})} }\abs{ \sum_{q',l'}  N_{q',l'}^{q,l} \paren{ \pi^*_{q',l'}- \pi^*_{q,l} } }
\enspace.
\end{align*}

Introducing the event $\Omega_n$ defined by \eqref{def.Omega.pi} and using  \eqref{assum.pi.trunc}, one gets for every $t>0$
\begin{align*}
& P^*\croch{\Omega_n \cap \acc{T_2 > t} } \\
\leq & P^*\croch{  \zeta^2  \sum_{q,l} 
\abs{ \sum_{q',l'} \sum_{ \tiny \begin{array}{c}
                           (i,j)\in\Dh\cup D^* \\
(z^*_i,z^*_j)=(q',l')\\
(z_i,z_j)=(q,l)
                         \end{array}}
 \paren{ X_{i,j}-\pi^*_{q',l'} } }  > t/(2v_n) } \\
& + P^*\croch{ \zeta^2 \sum_{q,l}   \abs{ \sum_{q',l'} N_{q',l'}^{q,l} \paren{ \pi^*_{q',l'}- \pi^*_{q,l} } } > t/(2v_n)} 
\enspace.
\end{align*}

For the first term, Hoeffding's inequality requires summing over a deterministic set of indices, which leads to
\begin{align*}
& P^*\croch{  \zeta^2  \sum_{q,l} 
\abs{ \sum_{q',l'} \sum_{ \tiny \begin{array}{c}
                           (i,j)\in\Dh \cup D^* \\
(z^*_i,z^*_j)=(q',l')\\
(z_i,z_j)=(q,l)
                         \end{array}}
 \paren{ X_{i,j}-\pi^*_{q',l'} } }  > t/(2v_n) }  \\
= & \sum_{k} \sum_{D,\ \abs{D}=k}   P^*\croch{  \zeta^2  \sum_{q,l} 
\abs{ \sum_{q',l'} \sum_{ \tiny \begin{array}{c}
                           (i,j)\in D \cup D^*\\
(z^*_i,z^*_j)=(q',l')\\
(z_i,z_j)=(q,l)
                         \end{array}}
 \paren{ X_{i,j}-\pi^*_{q',l'} } }  > t/(2v_n) } \enspace.
\end{align*}
where the sum over $k$ is computed for $ \lceil \gamma/2\, n r\rceil \leq k \leq 2nr$ by Proposition~\ref{A2} and Lemma~\ref{lem.upper.bound.number.couples}.

For each set $D$ such that $\abs{D}=k$, a union bound and Hoeffding's inequality provide
\begin{align*}
& P^*\croch{  \zeta^2  \sum_{q,l} 
\abs{ \sum_{q',l'} \sum_{ \tiny \begin{array}{c}
                           (i,j)\in D\cup D^* \\
(z^*_i,z^*_j)=(q',l')\\
(z_i,z_j)=(q,l)
                         \end{array}}
 \paren{ X_{i,j}-\pi^*_{q',l'} } }  > t/(2v_n) } \\
\leq &  Q^2 \max_{q,l} P^*\croch{ \abs{ \sum_{q',l'} \sum_{\tiny \begin{array}{c}
(i,j) \in D\cup D^* \\
(z^*_i,z^*_j)=(q',l')\\
(z_i,z_j)=(q,l)
\end{array}}
\paren{ X_{i,j} - \pi^*_{q',l'}} }   > t/\croch{2v_n (\zeta Q)^2} }\\
 \leq & Q^2  \exp \croch{- \frac{2}{\croch{2v_n (\zeta Q)^2}^2} \frac{  t^2}{k+\abs{D^*} } } =  
Q^2  \exp \croch{- \frac{1}{ 2(\zeta Q)^4 } \frac{ t^2}{v_n^2 (k+\abs{D^*}) } }
\enspace.
\end{align*}
Then,
\begin{align*}
& P^*\croch{  \zeta^2  \sum_{q,l} 
\abs{ \sum_{q',l'} \sum_{ \tiny \begin{array}{c}
                           (i,j)\in\Dh \cup D^*\\
(z^*_i,z^*_j)=(q',l')\\
(z_i,z_j)=(q,l)
                         \end{array}}
 \paren{ X_{i,j}-\pi^*_{q',l'} } }  > t/(2v_n) }  \\
\leq & Q^2 \sum_{k=\lceil \gamma/2\, n r\rceil}^{2nr}  (2nr)^k  \exp \croch{- \frac{1}{ 2(\zeta Q)^4 } \frac{ t^2}{v_n^2 (k+\abs{D^*}) } }
\enspace.
\end{align*}

For the second term, Lemma~\ref{lem.upper.bound.number.couples} provides
\begin{align*}
& P^*\croch{ \zeta^2 \sum_{q,l} \abs{ \sum_{q',l'} N_{q',l'}^{q,l} \paren{ \pi^*_{q',l'}- \pi^*_{q,l} } } > t/ (2v_n) } \\
\leq & Q^2 \max_{q,l} P^*\croch{  \abs{ \sum_{q',l'} N_{q',l'}^{q,l} \paren{ \pi^*_{q',l'}- \pi^*_{q,l} } } >  t/ \croch{2v_n (\zeta Q)^2} } \\
\leq & Q^2 P^*\croch{ 4nr > t/ \croch{2v_n (\zeta Q)^2} } \enspace.
\end{align*}

\paragraph{Upper bounding $T3$}
~\\
Let us first notice
\begin{align*}
 & \log\croch{
1+\frac{(\widehat{\pi}_{z^*_i,z^*_j}-\pi^*_{z^*_i,z^*_j})
(X_{i,j}-\pi^*_{z^*_i,z^*_j})}{\pi^*_{z^*_i,z^*_j}(1-\pi^*_{z^*_i,z^*_j})} } \\
= & (1-X_{i,j}) \log\croch{
1-\frac{ (\widehat{\pi}_{z^*_i,z^*_j}-\pi^*_{z^*_i,z^*_j}) }{ (1-\pi^*_{z^*_i,z^*_j}) } } + X_{i,j}
\log\croch{
1+\frac{ (\widehat{\pi}_{z^*_i,z^*_j}-\pi^*_{z^*_i,z^*_j})
 }{\pi^*_{z^*_i,z^*_j} } }
\enspace\cdot
\end{align*}
Then,
\begin{align*}
 & \sum_{(i,j)\in \Dh\cup D^*} \log\croch{
1+\frac{(\widehat{\pi}_{z^*_i,z^*_j}-\pi^*_{z^*_i,z^*_j})
(X_{i,j}-\pi^*_{z^*_i,z^*_j})}{\pi^*_{z^*_i,z^*_j}(1-\pi^*_{z^*_i,z^*_j})} } \\
= & \sum_{q,l} \sum_{\tiny \begin{array}{c}
                           (i,j)\in \Dh\cup D^* \\
(z^*_i,z^*_j)=(q,l)\\
                         \end{array}}
(1-X_{i,j}) \log\croch{
1-\frac{ (\widehat{\pi}_{q,l}-\pi^*_{q,l}) }{ (1-\pi^*_{q,l}) } } + X_{i,j}
\log\croch{
1+\frac{ (\widehat{\pi}_{q,l}-\pi^*_{q,l})
 }{\pi^*_{q,l} } }
\enspace\cdot
\end{align*}
Centering the $X_{i,j}$s, it comes
\begin{align*}
 & \sum_{(i,j)\in \Dh\cup D^*} \log\croch{
1+\frac{(\widehat{\pi}_{z^*_i,z^*_j}-\pi^*_{z^*_i,z^*_j})
(X_{i,j}-\pi^*_{z^*_i,z^*_j})}{\pi^*_{z^*_i,z^*_j}(1-\pi^*_{z^*_i,z^*_j})} } \\
= &  \sum_{q,l} \sum_{\tiny \begin{array}{c}
                           (i,j)\in \Dh\cup D^*\\
(z^*_i,z^*_j)=(q,l)\\
                         \end{array}}
(\pi^*_{q,l}-X_{i,j}) \log\croch{
1-\frac{ (\widehat{\pi}_{q,l}-\pi^*_{q,l}) }{ (1-\pi^*_{q,l}) } } + (X_{i,j} - \pi^*_{q,l})
\log\croch{
1+\frac{ (\widehat{\pi}_{q,l}-\pi^*_{q,l})
 }{\pi^*_{q,l} } }  \\
 & + \sum_{q,l} \sum_{\tiny \begin{array}{c}
                           (i,j)\in \Dh\cup D^*\\
(z^*_i,z^*_j)=(q,l)\\
                         \end{array}}
(1-\pi^*_{q,l}) \log\croch{
1-\frac{ (\widehat{\pi}_{q,l}-\pi^*_{q,l}) }{ (1-\pi^*_{q,l}) } } + \pi^*_{q,l}
\log\croch{
1+\frac{ (\widehat{\pi}_{q,l}-\pi^*_{q,l})
 }{\pi^*_{q,l} } } 
\enspace,
\end{align*}
which leads to
\begin{align*}
T_3 = & \sum_{q,l} \paren{ \log\croch{
1+\frac{ (\widehat{\pi}_{q,l}-\pi^*_{q,l})
 }{\pi^*_{q,l} } } -  \log\croch{
1-\frac{ (\widehat{\pi}_{q,l}-\pi^*_{q,l}) }{ (1-\pi^*_{q,l}) } } } 
\sum_{\tiny \begin{array}{c}
                           (i,j)\in \Dh\cup D^*\\
(z^*_i,z^*_j)=(q,l)\\
                         \end{array}} (X_{i,j} - \pi^*_{q,l}) 
\\
 & + \sum_{q,l} N^*_{q,l} \croch{
(1-\pi^*_{q,l}) \log\croch{
1-\frac{ (\widehat{\pi}_{q,l}-\pi^*_{q,l}) }{ (1-\pi^*_{q,l}) } } + \pi^*_{q,l}
\log\croch{
1+\frac{ (\widehat{\pi}_{q,l}-\pi^*_{q,l})
 }{\pi^*_{q,l} } } }
\enspace,
\end{align*}
where $N^*_{q,l} = \sum_{(i,j)\in \Dh\cup D^*} \1_{(z^*_i=q,\ z^*_j=l)}$.

Second on the event $\Omega_n$,  \eqref{constraint.norme.infinie} and $\abs{\log(1+x)} \leq 2\abs{x}$ for every $x\in[-1/2,1/2]$ entail
\begin{align*}
\abs{ T_3 } \leq & 4 v_n \sum_{q,l}   
\abs{ \sum_{\tiny \begin{array}{c}
                           (i,j)\in  \Dh\cup D^*\\
(z^*_i,z^*_j)=(q,l)\\
                         \end{array}} (X_{i,j} - \pi^*_{q,l}) }
+4v_n \sum_{q,l} N^*_{q,l} 
\enspace.
\end{align*}
Then for every $t>0$,
\begin{align*}
P^*\croch{\Omega_n \cap \acc{\abs{T_3} > t} } & \leq P^*\croch{ 4 v_n \sum_{q,l}   
\abs{ \sum_{\tiny \begin{array}{c}
                           (i,j)\in  \Dh\cup D^*\\
(z^*_i,z^*_j)=(q,l)\\
                         \end{array}} (X_{i,j} - \pi^*_{q,l}) } > t/2 } \\
& + P^*\croch{ 4v_n \sum_{q,l} N^*_{q,l}  > t/2 } \enspace.
\end{align*}

Similarly to $T_2$, partitioning and Hoeffding's inequality lead to
\begin{align*}
& P^*\croch{ 4 v_n \sum_{q,l}   
\abs{ \sum_{\tiny \begin{array}{c}
                           (i,j)\in  \Dh\cup D^*\\
(z^*_i,z^*_j)=(q,l)\\
                         \end{array}} (X_{i,j} - \pi^*_{q,l}) } > t/2 }\\
& \leq  Q^2 \sum_{k=\lceil \gamma/2\, n r\rceil}^{2nr}  (2nr)^k  \exp\croch{ - \frac{2}{8^2Q^4} \frac{ t^2}{v_n^2 (k+\abs{D^*})} } 
\enspace,
\end{align*}
and Lemma~\ref{lem.upper.bound.number.couples} provides
\begin{align*}
P^*\croch{ 4v_n \sum_{q,l} N^*_{q,l}  > t/2 } \leq P^*\croch{ v_n  4nr  > t/8 } \enspace.
\end{align*}
Then, 
\begin{align*}
P^*\croch{\Omega_n \cap \acc{\abs{T_3} > t} } 
& \leq  Q^2 \sum_{k=\lceil \gamma/2\, n r\rceil}^{2nr}  (2nr)^k  \exp\croch{ - \frac{2}{8^2Q^4} \frac{ t^2}{v_n^2 (k+ \abs{D^*}) } }  \\
& \qquad + P^*\croch{ v_n  4nr  > t/8 } \enspace.
\end{align*}

\medskip

\paragraph{Gathering $T_1$-, $T_2$-, and $T_3$-upper bounds}
~\\

At the beginning the following steps are very close to those in the proof of Theorem~\ref{A2}.

For every $\epsilon>0$
\begin{align*}
& P^*\croch{
 \sum_{[\zn]\neq [\zn^*]} \frac{\widehat{P}^{\Xn}([\zn])}{\widehat{P}^{\Xn}([\zn^*])}  > \epsilon }\\
\leq &\ P^*\croch{ \acc{
 \sum_{[\zn] \neq [\zn^*]} \frac{\widehat{P}^{\Xn}([\zn])}{\widehat{P}^{\Xn}([\zn^*])}  > \epsilon} \cap \Omega_n } + P^*\croch{ \Omega_n^c } \enspace.
\end{align*}
Furthermore,
\begin{align*}
 & P^*\croch{ \acc{
 \sum_{[\zn]\neq [\zn^*]} \frac{\widehat{P}^{\Xn}([\zn])}{\widehat{P}^{\Xn}([\zn^*])}  > \epsilon} \cap \Omega_n }\\
\leq  & \sum_{r=1}^n \sum_{ \tiny 
\begin{array}{c}
  \zn\not\in [\zn^*] \\
\norm{\zn-\zn^*}_0=r
\end{array}} 
P^*\croch{ \acc{\log \frac{\widehat{P}^{\Xn}(\zn)}{\widehat{P}^{\Xn}(\zn^*)} > -(r+1)\log n -r \log Q+\log\epsilon} \cap \Omega_n }\\
\leq  & \sum_{r=1}^n \sum_{ \tiny 
\begin{array}{c}
  \zn\not\in [\zn^*] \\
\norm{\zn-\zn^*}_0=r
\end{array}}  
P^*\croch{ \acc{\log \frac{\widehat{P}^{\Xn}(\zn)}{\widehat{P}^{\Xn}(\zn^*)} > -5r\log n} \cap \Omega_n }\quad(n\geq \max\acc{Q,\epsilon^{-1}})\\
= &  \sum_{r=1}^n \sum_{  \tiny 
\begin{array}{c}
  \zn\not\in [\zn^*] \\
\norm{\zn-\zn^*}_0=r
\end{array}} 
 P^*\croch{ \acc{T_1 + T_2 - T_3 >
-5r\log n} \cap \Omega_n} \enspace.
\end{align*}
 It remains to deal with $P^*\croch{ \acc{T_1 + T_2 - T_3 >
-5r\log n} \cap \Omega_n}$:
\begin{align*}
 & P^*\croch{ \acc{T_1 + T_2 - T_3 >
-5r\log n} \cap \Omega_n} \\
\leq & \  P^*\croch{ \acc{ T_1 + T_2 - T_3 >
-5r\log n } \cap \Omega_n \cap \acc{ \abs{T_3}\leq r\log n } } \\
& \qquad + P^*\croch{ \acc{ \abs{T_3} > r\log n }\cap \Omega_n}\\
\leq & \  P^*\croch{ \acc{T_1 + T_2  >
-6r\log n} \cap \Omega_n }  + P^*\croch{ \acc{\abs{T_3} > r\log n } \cap \Omega_n } \\
\leq & \  P^*\croch{ T_1 > -7r\log n } + P^*\croch{ \acc{\abs{T_2} > r\log n}\cap \Omega_n } \\
&+ P^*\croch{ \acc{\abs{T_3} > r\log n}\cap \Omega_n } \enspace.
\end{align*}
Upper bounding $T_1$ comes from Proposition~\ref{A2} and results in
\begin{align*}
P^*\croch{ T_1 > -7r\log n } & \leq \exp\croch{ r\log n \frac{14K^*}{C_{\zeta}}} \cdot \exp \croch{- \abs{D^*} \frac{  (K^*)^2}{C_{\zeta}} } \\
& \leq \exp\croch{ r\log n \frac{14K^*}{C_{\zeta}}} \cdot \exp \croch{-  nr \frac{ \gamma (K^*)^2}{2C_{\zeta}} } 
\enspace.
\end{align*}
For $T_2$, Lemma~\ref{lem.upper.bound.number.couples} provides
\begin{align*}
P^*\croch{ \acc{\abs{T_2} > r\log n}\cap \Omega_n } & \leq  Q^2 \sum_{k=\lceil \gamma/2\, n r\rceil}^{2nr}  (2nr)^k  \exp \croch{- \frac{1}{ 2(\zeta Q)^4 } \frac{ (r\log n)^2}{4nr v_n^2  } } \\
& + Q^2 P^*\croch{ 4nr > r\log n/ \croch{2v_n (\zeta Q)^2} } \\
& \leq  Q^2   \exp\croch{8nr \log n } \cdot \exp \croch{- \frac{1}{ 2(\zeta Q)^4 } \frac{ (r\log n)^2}{4nr v_n^2  } } \\
& + Q^2 P^*\croch{ v_n > \frac{\log n}{ 8n (\zeta Q)^2 } } \enspace.
\end{align*}
Similarly for $T_3$, it results
\begin{align*}
  P^*\croch{ \acc{\abs{T_3} > r\log n}\cap \Omega_n } & \leq 
 Q^2 \sum_{k=\lceil \gamma/2\, n r\rceil}^{2nr}  (2nr)^k  \exp\croch{ - \frac{2}{8^2Q^4} \frac{ (r\log n)^2}{ 4nr v_n^2  } }  \\
& \qquad + P^*\croch{ v_n  4nr  > (r\log n)/8  } \\
& \leq 
 Q^2   \exp\croch{8nr \log n } \cdot \exp\croch{ - \frac{2}{8^2Q^4} \frac{ (r\log n)^2}{ 4nr v_n^2 } }  \\
& \qquad + P^*\croch{ v_n    >  \frac{ \log n}{32n}  } \enspace.
\end{align*}

From the previous bounds, one observes that requiring $v_n=o(\log n /n)$ makes $P^*\croch{ v_n    >  \frac{ \log n}{32n}  }$ and $P^*\croch{ v_n > \frac{\log n}{ 8n (\zeta Q)^2 } }$ vanish as $n$ grows, which leads to
\begin{align*}
 & P^*\croch{ \acc{T_1 + T_2 - T_3 > -5r\log n } \cap \Omega_n} \\
& \quad \leq  Q^2   \exp\croch{8nr \log n } \cdot \exp\croch{ - \frac{2}{8^2Q^4} \frac{ r(\log n)^2}{ 4n v_n^2 } } \\
& \quad  +  Q^2   \exp\croch{8nr \log n } \cdot \exp \croch{- \frac{1}{ 2(\zeta Q)^4 } \frac{ r(\log n)^2}{4n v_n^2  } } \\
& \quad  + \exp\croch{ r\log n \frac{14K^*}{C_{\zeta}}} \cdot \exp \croch{-  nr \frac{ \gamma (K^*)^2}{2C_{\zeta}} } \\
& \quad  \leq C_1 \paren{ \exp\croch{8n \log  n - C_2 \frac{(\log n)^2}{nv_n^2}} }^r
\end{align*}
for large enough values of $n$ and constants $C_1,C_2>0$ only depending on $Q$, $\zeta$, $\gamma$, and $K^*$ but not of $z^*$.

Following the same line as in the proof of Theorem~\ref{thm.distrib.conv.zn}, 
for every $\epsilon>0$ and large enough values of $n$, it comes 
\begin{align*}
 & P^*\croch{ \acc{
 \sum_{[\zn]\neq [\zn^*]} \frac{\widehat{P}^{\Xn}([\zn])}{\widehat{P}^{\Xn}([\zn^*])}  > \epsilon} \cap \Omega_n } \\
& \leq \sum_{r=1}^n {n\choose r} (Q-1)^r  C_1 \paren{ \exp\croch{8n \log  n - C_2 \frac{(\log n)^2}{nv_n^2}} }^r \\
& = C_1 \paren{ \croch{ 1+(Q-1) u_n^{\prime} }^n -1 } \enspace,
\end{align*}
where $u_n^{\prime} = \exp\croch{8n \log  n - C_2 \frac{(\log n)^2}{nv_n^2}}$.
Thus requiring $v_n = o\paren{\sqrt{\log n}/n}$, it comes
\begin{align*}
\croch{ 1+(Q-1) u_n^{\prime} }^n & = \exp\croch{ n \log\paren{ 1+ (Q-1) u_n^{\prime} } } \\
& \leq \exp\croch{ (Q-1) n u_n^{\prime} } \xrightarrow[n\to +\infty]{} 1 \enspace,
\end{align*}
which concludes the proof since no upper bound does depend on $z^*$.

\subsection{Lemma~\ref{lem.upper.bound.number.couples} }

\begin{lem}\label{lem.upper.bound.number.couples}
  Let $\pi\in\M_Q(\R)$ denote a matrix with coefficients $\pi_{q,l}$ belong to $[0,1]$, and $\zn$and $\zn^*$ be two label vectors such that $\sum_{i=1}^n \1_{z_i\neq z^*_i}=r$.
Then,
\begin{align*}
  \abs{ \acc{(i,j)\mid i\neq j,\ \pi_{z_i,z_j}\neq \pi_{z^*_i,z^*_j} } } \leq 2nr \enspace.
\end{align*}

\end{lem}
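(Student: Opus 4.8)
Let $\pi \in \mathcal{M}_Q(\R)$ with all entries $\pi_{q,l} \in [0,1]$, and let $\zn$, $\zn^*$ be label vectors with $\sum_{i=1}^n \1_{z_i \neq z^*_i} = r$. Then $\abs{ \acc{(i,j) \mid i\neq j,\ \pi_{z_i,z_j} \neq \pi_{z^*_i,z^*_j}} } \leq 2nr$.

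The plan is to bound the ``bad'' set of pairs by observing that a pair $(i,j)$ can only contribute if $\zn$ and $\zn^*$ differ in at least one of the two coordinates $i$ or $j$. First I would introduce $I = \acc{i \mid z_i \neq z^*_i}$, the set of indices where the two label vectors disagree, so that $\abs{I} = r$. The key elementary observation is the set inclusion
\begin{align*}
\acc{(i,j) \mid i\neq j,\ \pi_{z_i,z_j} \neq \pi_{z^*_i,z^*_j}} \subseteq \acc{(i,j) \mid i\neq j,\ i\in I \text{ or } j\in I}\enspace,
\end{align*}
which holds because if $i\notin I$ and $j\notin I$ then $z_i = z^*_i$ and $z_j = z^*_j$, hence $\pi_{z_i,z_j} = \pi_{z^*_i,z^*_j}$ and the pair cannot lie in the left-hand set. (Note this step does not use anything about $\pi$ beyond being a fixed matrix; the entries' range in $[0,1]$ is irrelevant.)

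The second step is a straightforward counting of the right-hand set. The number of ordered pairs $(i,j)$ with $i\neq j$ and $i\in I$ is at most $\abs{I}\cdot(n-1) = r(n-1)$, and similarly the number with $i\neq j$ and $j\in I$ is at most $r(n-1)$; by the union bound over these two families,
\begin{align*}
\abs{ \acc{(i,j) \mid i\neq j,\ i\in I \text{ or } j\in I} } \leq 2r(n-1) \leq 2nr\enspace.
\end{align*}
Combining with the inclusion from the first step yields the claim.

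There is no real obstacle here — the statement is a counting fact and the only ``idea'' is the inclusion observation, which is immediate. The one point to be slightly careful about is whether pairs are ordered or unordered; since the ambient sums in the applications (e.g.\ in the bound on $T_2$ and $T_3$ in the proof of Proposition~\ref{prop.distrib.conv.zn.2}) run over ordered pairs $i\neq j$, I would state and prove it for ordered pairs, which is what the bound $2nr$ (rather than $nr$) reflects. Everything else is routine.
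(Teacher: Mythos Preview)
Your proof is correct and follows essentially the same approach as the paper: both rest on the observation that a pair $(i,j)$ can only satisfy $\pi_{z_i,z_j}\neq\pi_{z^*_i,z^*_j}$ if at least one of $i,j$ lies in the disagreement set $I$. The only cosmetic difference is that the paper partitions into the disjoint cases ``$i\in I$'' and ``$i\notin I,\ j\in I$'' (yielding $nr+(n-r)r\leq 2nr$), whereas you use a union bound over ``$i\in I$'' and ``$j\in I$'' (yielding $2r(n-1)\leq 2nr$); the underlying idea is identical.
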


\medskip

\begin{proof}[Proof of Lemma~\ref{lem.upper.bound.number.couples}]

Without loss of generality, one can assume the first $r$ coordinates of $\zn$ are different from those of $\zn^*$.
Then, any difference between $\pi_{z_i,z_j}$ and $\pi_{_{z^*_i,z^*_j}}$ can only occur if  $(z_i,z_j)\neq (z^*_i,z^*_j)$.
It results
\begin{align*}
\abs{ \acc{(i,j)\mid i\neq j,\ \pi_{z_i,z_j}\neq \pi_{z^*_i,z^*_j} } } 
=  & \abs{ \acc{(i,j)\mid i\neq j,\ \pi_{z_i,z_j}\neq \pi_{z^*_i,z^*_j},\ i\leq r} } \\
&\ + \abs{ \acc{(i,j)\mid i,\ \pi_{z^*_i,z_j}\neq \pi_{z^*_i,z^*_j},\ i> r,\ j\leq r} } \\
\leq &\quad nr + (n-r)r \\
\leq & \quad 2nr \enspace. 
\end{align*}

\end{proof}

\bigskip

\begin{lem}\label{lem.mixture.model.class.freq}
With the same notation as Proposition~\ref{prop.distrib.conv.zn.2} and the assumptions of Theorem~\ref{thm.consist.alpha.MLE}, the maximum likelihood estimator of $\alpha$ is given by  
\begin{align*}
\forall 1\leq q \leq Q,\qquad    \widehat{\alpha}_q = \frac{1}{n}\sum_{i=1}^n \widehat{P}(Z_i=q \mid \Xn )  \enspace.
\end{align*}
\end{lem}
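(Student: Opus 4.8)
The plan is to obtain $\widehat{\alpha}$ as the solution of the constrained first-order stationarity condition for the maximization of $\alpha\mapsto\L_2(\Xn;\alpha,\widehat{\pi})$ over the simplex $\acc{\alpha\in(0,1)^Q\mid\sum_{q}\alpha_q=1}$, and then to recognize the resulting expression as a posterior mean under $\widehat{P}$. Recall $\P\croch{\Zn=\zn}=\prod_{i=1}^n\alpha_{z_i}$ and set $N_q(\zn)=\sum_{i=1}^n\1_{(z_i=q)}$; since $\L_1(\Xn;\zn,\pi)$ does not depend on $\alpha$ and $\partial_{\alpha_q}\prod_i\alpha_{z_i}=\alpha_q^{-1}N_q(\zn)\prod_i\alpha_{z_i}$, differentiating $\L_2$ in $\alpha_q$ through the logarithm gives
\begin{align*}
\frac{\partial\L_2}{\partial\alpha_q}(\Xn;\alpha,\pi)
=\sum_{\zn}\frac{e^{\L_1(\Xn;\zn,\pi)}\prod_{i}\alpha_{z_i}}{\sum_{\zn'}e^{\L_1(\Xn;\zn',\pi)}\prod_{i}\alpha_{z'_i}}\,\frac{N_q(\zn)}{\alpha_q}
=\frac{1}{\alpha_q}\sum_{\zn}\P\paren{\Zn=\zn\mid\Xn}\,N_q(\zn)\enspace.
\end{align*}

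Next I would introduce a Lagrange multiplier $\lambda$ for the constraint $\sum_q\alpha_q=1$ and write the stationarity condition at the maximizer $(\widehat{\alpha},\widehat{\pi})$. Since the posterior evaluated at $(\widehat{\alpha},\widehat{\pi})$ is exactly $\widehat{P}$, this reads $\widehat{\alpha}_q^{-1}\sum_{\zn}\widehat{P}\paren{\Zn=\zn\mid\Xn}N_q(\zn)=\lambda$ for every $q$. Multiplying by $\widehat{\alpha}_q$, summing over $q$, and using $\sum_q N_q(\zn)=n$ for every $\zn$ together with $\sum_{\zn}\widehat{P}\paren{\Zn=\zn\mid\Xn}=1$ gives $\lambda=n$. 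Hence $\widehat{\alpha}_q=\frac{1}{n}\sum_{\zn}\widehat{P}\paren{\Zn=\zn\mid\Xn}N_q(\zn)=\frac{1}{n}\sum_{i=1}^n\widehat{P}\paren{Z_i=q\mid\Xn}$, which is the claim. Equivalently, this is the fixed-point characterization of the MLE through the EM M-step for $\alpha$.

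The step needing care is the justification that the maximizer of $\alpha\mapsto\L_2(\Xn;\alpha,\widehat{\pi})$ over the closed simplex lies in its interior, so that no boundary term enters the stationarity condition. This follows from a short estimate: as $\alpha_q\to0^+$ one has $\sum_{\zn}\P\paren{\Zn=\zn\mid\Xn}N_q(\zn)=O(\alpha_q)$, since the unnormalized posterior weight of any $\zn$ with $N_q(\zn)\ge1$ is of order $\alpha_q$, so that $\partial_{\alpha_q}\L_2$ has a strictly positive limit at the face $\acc{\alpha_q=0}$; thus the continuous function $\L_2$ on the compact simplex cannot attain its maximum on the boundary, the interior stationary point of the constrained problem is the maximizer, and the displayed formula applies. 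The remaining manipulations are the routine differentiation carried out above.
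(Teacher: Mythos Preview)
Your argument is correct and follows essentially the same route as the paper's proof: both set up a Lagrangian for the constrained maximization, differentiate in $\alpha_q$, identify the multiplier by summing over $q$, and then rewrite the resulting fixed-point equation as a posterior average. The only cosmetic difference is that you differentiate the log-likelihood $\L_2$ (obtaining $\lambda=n$) whereas the paper differentiates the likelihood itself (obtaining $\lambda=-n f_{\Xn}(\widehat{\alpha},\pi)$); your additional paragraph on the interior-point issue is an extra justification the paper does not provide.
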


\medskip

\begin{proof}[Proof of Lemma~\ref{lem.mixture.model.class.freq}]
Let us introduce some notation:
\begin{itemize}
  \item  $(\zn, \pi ) \mapsto f_{\Xn}(\zn, \pi )=\L_1\paren{\Xn;\zn,\pi}$,

  \item $(\alpha, \pi ) \mapsto f_{\Xn}(\alpha, \pi )=\L_2\paren{\Xn;\alpha,\pi}$,

  \item $\alpha \mapsto f_{\Zn}(\alpha)=\prod_{i=1}^n \alpha_{Z_i}$,

  \item $(\alpha, \pi ) \mapsto f_{\Xn,\Zn}(\alpha, \pi ) = f_{\Xn}(\Zn, \pi ) f_{\Zn}(\alpha) $ denote the complete likelihood of $(\alpha,\pi)$.
  
\end{itemize}

We start computing the derivative of  $ f_{\Xn}(\alpha, \pi ) + \lambda (\sum_{q}\alpha_q-1) $ with respect to $\alpha_k$, for $1\leq k\leq Q$ and $ \lambda\in\R$. 
\begin{align*}
  \frac{ \partial\croch{ f_{\Xn}(\alpha, \pi ) + \lambda (\sum_{q}\alpha_q-1) } }{\partial\alpha_k} 
=   \sum_{\zn}  \frac{ N_k(\zn) }{\alpha_k} f_{\Xn}(\zn, \pi ) f_{\zn}(\alpha)+\lambda  \enspace,
\end{align*}
where $N_k(\zn) = \sum_{i=1}^n\1_{(z_i=k)}$.
Multiplying by $\alpha_k$ and summing over $k$ leads to
\begin{align*}
  \lambda = - n f_{\Xn}(\widehat{\alpha}, \pi ) \enspace,
\end{align*}
where $\widehat{\alpha}$ denotes the optimum location of $\alpha$ (for which the derivative vanishes).
It results for every $k$
\begin{align*}
\widehat{\alpha}_k  & = \sum_{\zn}  \frac{ N_k(\zn) }{n}  \frac{ f_{\Xn}(\zn, \pi ) f_{\Zn}(\widehat{\alpha}) }{ f_{\Xn}(\widehat{\alpha}, \pi ) } \\ 
& = \sum_{\zn}  \frac{ N_k(\zn) }{n}  \frac{ f_{\Xn,\zn}(\widehat{\alpha}, \pi )}{ f_{\Xn}(\widehat{\alpha}, \pi ) } \\ 
& = \sum_{\zn}  \frac{ N_k(\zn) }{n}  \frac{ f_{\Xn,\zn}(\widehat{\alpha}, \pi )}{ f_{\Xn}(\widehat{\alpha}, \pi ) } 
= \sum_{\zn}  \frac{ N_k(\zn) }{n}  f_{\zn}^{\Xn}(\widehat{\alpha}, \pi )
 \enspace,
\end{align*}
where $f_{\zn}^{\Xn}(\widehat{\alpha}, \pi ) = \P_{\widehat{\alpha}, \pi}\croch{ \Zn = \zn \mid \Xn }$ (Section~\ref{subsec.main.asymptotic.result}) denotes the \emph{a posteriori} probability of $\Zn=\zn$ given $\Xn$ with parameters $(\widehat{\alpha},\pi)$. 

Finally, the result comes from
\begin{align*}
  \widehat{\alpha}_k  
& = \frac{ 1 }{n}  \sum_{i=1}^n \sum_{\zn}  \1_{(z_i=k)} f_{\zn}^{\Xn}(\widehat{\alpha}, \pi ) \\
& = \frac{ 1 }{n}  \sum_{i=1}^n \sum_{\zn}  \1_{(z_i=k)} \P_{\widehat{\alpha}, \pi}\croch{ \Zn = \zn \mid \Xn } \\
& = \frac{ 1 }{n}  \sum_{i=1}^n  \P_{\widehat{\alpha}, \pi}\croch{ Z_i = k \mid \Xn } \enspace.
\end{align*}
Replacing $\pi$ by the MLE $\pih$ of $\pi^*$, the MLE of $\alpha^*$ satisfies for every $k$
\begin{align*}
  \widehat{\alpha}_k  
= \frac{ 1 }{n}  \sum_{i=1}^n  \P_{\widehat{\alpha}, \pih}\croch{ Z_i = k \mid \Xn } = \frac{ 1 }{n}  \sum_{i=1}^n  \widehat{P}\paren{ Z_i=k \mid
\Xn} \enspace.
\end{align*}

\end{proof}

\newpage

\section{Proof of Theorem~\ref{thm.uniform.P-as.Var.Likelihood}}

\begin{lem}\label{lem.unif.conv.var}
Let $\znh =\znh (\pi)=\argmax_{\zn}\L_1(\Xn; \zn, \pi)$. For every $\Xn\in\Xdefn$, $(\alpha,\pi) \in \Theta$, and $\taun \in S_n$, it comes that
\begin{align*}
\J(\Xn; \taun, \alpha, \pi) \leq \L_2(\Xn;  \alpha, \pi) \leq  \L_1(\Xn; \znh,\pi) \enspace.
\end{align*}
\end{lem}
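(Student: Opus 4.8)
The statement asserts a chain of two inequalities: $\J(\Xn; \taun, \alpha, \pi) \leq \L_2(\Xn; \alpha, \pi)$ and $\L_2(\Xn; \alpha, \pi) \leq \L_1(\Xn; \znh, \pi)$. I would prove them separately, since they come from quite different elementary facts. The first is essentially the variational identity \eqref{eq.variational.approx}: by definition $\J(\Xn; \taun, \alpha, \pi) = \L_2(\Xn; \alpha, \pi) - K(D_{\taun}, P^{\Xn})$, and since the Kullback--Leibler divergence $K(D_{\taun}, P^{\Xn})$ is always nonnegative, subtracting it can only decrease $\L_2$. So the first inequality is immediate once one records that the identity \eqref{eq.variational.approx} holds for every $D_{\taun} \in \mathcal{D}_n$ and that $K \geq 0$.

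For the second inequality, I would expand $\L_2$ using its definition \eqref{def.log-likelihood.SBM}:
\begin{align*}
\L_2(\Xn; \alpha, \pi) = \log\paren{ \sum_{\zn} e^{\L_1(\Xn;\zn,\pi)} \P\croch{\Zn = \zn} } \enspace.
\end{align*}
Since $\znh = \znh(\pi) = \argmax_{\zn} \L_1(\Xn; \zn, \pi)$, every term in the sum satisfies $e^{\L_1(\Xn; \zn, \pi)} \leq e^{\L_1(\Xn; \znh, \pi)}$. Pulling this bound out of the sum and using $\sum_{\zn} \P\croch{\Zn = \zn} = 1$ (the $\P\croch{\Zn = \zn} = \prod_i \alpha_{z_i}$ are the probabilities of a multinomial vector, hence sum to one), one gets
\begin{align*}
\sum_{\zn} e^{\L_1(\Xn;\zn,\pi)} \P\croch{\Zn = \zn} \leq e^{\L_1(\Xn; \znh, \pi)} \sum_{\zn} \P\croch{\Zn = \zn} = e^{\L_1(\Xn; \znh, \pi)} \enspace,
\end{align*}
and taking logarithms yields $\L_2(\Xn; \alpha, \pi) \leq \L_1(\Xn; \znh, \pi)$. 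Combining the two parts gives the claimed chain.

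\textbf{Main obstacle.} There is no real difficulty here; the result is a bookkeeping lemma. The only point requiring a modicum of care is making sure the variational identity \eqref{eq.variational.approx} is applied correctly — namely that $K(D_{\taun}, P^{\Xn})$ is the divergence of $D_{\taun}$ \emph{from} $P^{\Xn}$ in the right order so that it is both well defined (the support of $D_{\taun}$ is contained in that of $P^{\Xn}$, which holds because $P^{\Xn}$ puts positive mass on every $\zn$ for which the likelihood is finite) and nonnegative. One should also note in passing that when some $\pi_{q,l} \in \{0,1\}$ the quantity $e^{\L_1}$ may vanish or the logarithm diverge to $-\infty$; the inequalities still hold as statements in $[-\infty, +\infty)$, with the degenerate terms simply contributing $0$ to the sum, so no separate treatment is needed. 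Everything else is a one-line monotonicity argument.
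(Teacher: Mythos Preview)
Your proof is correct and follows exactly the same approach as the paper: the first inequality from the variational identity \eqref{eq.variational.approx} together with $K\geq 0$, and the second by bounding each $e^{\L_1(\Xn;\zn,\pi)}$ by $e^{\L_1(\Xn;\znh,\pi)}$ and using $\sum_{\zn}\P[\Zn=\zn]=1$. The paper states these two steps more tersely but the argument is identical.
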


\medskip

\begin{proof}[Proof of Lemma~\ref{lem.unif.conv.var}]
The first inequality results from the definition of $\J$ given by Eq.~\eqref{eq.variational.approx}.

The second one comes from $\znh (\pi)=\argmax_{\zn}\L_1(\Xn; \zn, \pi)$.
Thus for every $(\alpha,\pi)$,
\begin{align*}
\L_2(\Xn;  \alpha, \pi) &
 \leq  \log \left\{ e^{ \L_1(\Xn; \znh,\pi) }  \sum_{z_{[n]} \in \Zdefn}  P_{\Zn}(\zn) \right\} \leq  \L_1(\Xn; \znh,\pi) \enspace.
 \end{align*}
\end{proof}

\bigskip

\begin{lem}\label{lem.unif.gap.likelihoods.variation}
Lemma~\ref{lem.unif.conv.var} and Assumption  \eqref{assum.alpha.trunc} entail that there exists $0<\gamma<1$ such that
for every $(\alpha,\pi)$,
\begin{align*}
\abs{\L_2(\Xn; \alpha, \pi)-\L_1(\Xn; \znh, \pi)} \leq &\ n \log (1/\gamma)\enspace,\\
\abs{\J(\Xn; \tahn, \alpha, \pi)-\L_1(\Xn; \znh, \pi)} \leq &\ n \log (1/\gamma) \enspace.
\end{align*}
\end{lem}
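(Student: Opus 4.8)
The plan is to sandwich both $\L_2(\Xn;\alpha,\pi)$ and $\J(\Xn;\tahn,\alpha,\pi)$ between $\L_1(\Xn;\znh,\pi)$ and $\L_1(\Xn;\znh,\pi)-n\log(1/\gamma)$. Lemma~\ref{lem.unif.conv.var} already provides the upper bounds $\J(\Xn;\tahn,\alpha,\pi)\le\L_2(\Xn;\alpha,\pi)\le\L_1(\Xn;\znh,\pi)$ (applied with $\taun=\tahn$), so the only thing left to establish is a matching lower bound of the form $\J(\Xn;\tahn,\alpha,\pi)\ge\L_1(\Xn;\znh,\pi)-n\log(1/\gamma)$. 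Since $\tahn$ maximizes $\taun\mapsto\J(\Xn;\taun,\alpha,\pi)$ over $\mathcal{S}_n$, it suffices to evaluate $\J$ at a single conveniently chosen $\taun\in\mathcal{S}_n$.

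First I would plug the degenerate (point-mass) configuration $\taun=\delta_{\znh}$, i.e. $\tau_{i,q}=\1_{(\zh_i=q)}$, into the closed-form expression~\eqref{exp.variatonal.functional}. With this choice the entropy-type term $-\sum_{iq}\tau_{i,q}\log\tau_{i,q}$ vanishes (convention $0\log 0=0$), the term $\sum_{iq}\tau_{i,q}\log\alpha_q$ reduces to $\sum_{i=1}^n\log\alpha_{\zh_i}$, and the double sum $\sum_{i\neq j}\sum_{q,l}b_{ij}(q,l)\tau_{i,q}\tau_{j,l}$ collapses to $\sum_{i\neq j}b_{ij}(\zh_i,\zh_j)=\L_1(\Xn;\znh,\pi)$. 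Hence $\J(\Xn;\delta_{\znh},\alpha,\pi)=\L_1(\Xn;\znh,\pi)+\sum_{i=1}^n\log\alpha_{\zh_i}$.

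Next I would invoke Assumption~\eqref{assum.alpha.trunc}, which guarantees $\alpha_q\in[\gamma,1-\gamma]$ with $0<\gamma<1/Q<1$, so that $\log\alpha_{\zh_i}\ge\log\gamma$ for every $i$, and therefore $\sum_{i=1}^n\log\alpha_{\zh_i}\ge n\log\gamma=-n\log(1/\gamma)$. Combining this with the maximality of $\tahn$ gives $\J(\Xn;\tahn,\alpha,\pi)\ge\J(\Xn;\delta_{\znh},\alpha,\pi)\ge\L_1(\Xn;\znh,\pi)-n\log(1/\gamma)$, and together with Lemma~\ref{lem.unif.conv.var} this yields the squeeze
\[ \L_1(\Xn;\znh,\pi)-n\log(1/\gamma)\ \le\ \J(\Xn;\tahn,\alpha,\pi)\ \le\ \L_2(\Xn;\alpha,\pi)\ \le\ \L_1(\Xn;\znh,\pi), \]
from which both displayed inequalities follow immediately, with $\gamma$ the same constant as in~\eqref{assum.alpha.trunc} and in particular independent of $(\alpha,\pi)$.

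No genuine obstacle is expected here; the only point requiring a little care is the bookkeeping that makes~\eqref{exp.variatonal.functional} collapse for a point-mass $\taun$, together with the degenerate case where $\pi$ has a coefficient in $\{0,1\}$ incompatible with the observed $\Xn$: in that case $\L_1(\Xn;\znh,\pi)=-\infty$, so all three quantities are $-\infty$ and the bounds hold trivially in $[-\infty,+\infty]$ (alternatively, one restricts attention to the admissible parameter set on which all quantities are finite).
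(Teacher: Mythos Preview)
Your proposal is correct and follows essentially the same approach as the paper: both sandwich $\J(\Xn;\tahn,\alpha,\pi)$ and $\L_2(\Xn;\alpha,\pi)$ between $\L_1(\Xn;\znh,\pi)$ and $\J(\Xn;\delta_{\znh},\alpha,\pi)=\L_1(\Xn;\znh,\pi)+\sum_i\log\alpha_{\zh_i}$, then invoke \eqref{assum.alpha.trunc}. The only cosmetic difference is that the paper packages the identity $\J(\Xn;\znh,\alpha,\pi)=\L_1(\Xn;\znh,\pi)+\sum_i\log\alpha_{\zh_i}$ as a separate lemma (Lemma~\ref{lem.Daudin.et.al}), whereas you derive it directly by collapsing the closed-form expression~\eqref{exp.variatonal.functional}.
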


\medskip

\begin{proof}[Proof of Lemma~\ref{lem.unif.gap.likelihoods.variation}]
From Lemma~\ref{lem.unif.conv.var} and definition of $\tahn$ it comes for every $(\alpha,\pi)$:
\begin{align*}
\J(\Xn; \znh, \alpha, \pi) \le  \J(\Xn; \tahn, \alpha, \pi) \le \L_2(\Xn; \alpha, \pi) \le  \L_1(\Xn; \znh, \pi)\enspace.
\end{align*}
Combined with
$\J(\Xn; \znh, \alpha, \pi)= \L_1(\Xn; \znh, \pi)+\sum_{i=1}^n \log \alpha_{\zh_i}$ (see Lemma~\ref{lem.Daudin.et.al}), it leads to both
\begin{align*}
\abs{\L_2(\Xn;  \alpha, \pi)-\L_1(\Xn; \znh,\pi)} \leq &\ -\sum_{i=1}^n \log \alpha_{\zh_i}\enspace,\\
\abs{\J(\Xn; \tahn, \alpha, \pi)-\L_1(\Xn; \znh, \pi)} \leq &\ -\sum_{i=1}^n \log \alpha_{\zh_i}\enspace.
\end{align*}
Assumption \eqref{assum.alpha.trunc} yields the conclusion.
\end{proof}

\bigskip

\begin{lem}\label{lem.Daudin.et.al}
With the same notation as Theorem~\ref{thm.uniform.P-as.Var.Likelihood}, let $\znh = \znh(\pi) = \argmin_{\zn} \L_1(\Xn; \zn, \pi)$. 
Then for every $(\alpha,\pi)$, 
  \begin{align*}
    \J(\Xn; \znh, \alpha, \pi)= \L_1(\Xn; \znh, \pi) +\sum_{i=1}^n \log\alpha_{\widehat{z}_i} \enspace.
  \end{align*}

\end{lem}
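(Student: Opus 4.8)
The plan is to evaluate the variational functional $\J(\Xn; \taun, \alpha, \pi)$ at the degenerate choice of variational parameters $\taun = \delta_{\znh}$, i.e. $\tau_{i,q} = \1_{(\zh_i = q)}$ for every $i$ and $q$. First I would recall the explicit expression \eqref{exp.variatonal.functional},
\begin{align*}
\J(\Xn; \taun, \alpha, \pi)= \sum_{i  \neq  j} \sum_{q,l} b_{ij}(q,l)\, \tau_{i,q}\, \tau_{j,l} - \sum_{i,q} \tau_{i,q} \paren{\log \tau_{i,q} - \log\alpha_{q}} \enspace,
\end{align*}
with $b_{ij}(q,l) = X_{i,j}\log\pi_{q,l} + (1-X_{i,j})\log(1-\pi_{q,l})$. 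The point is simply to substitute $\tau_{i,q} = \1_{(\zh_i=q)}$ and simplify each of the two sums separately.

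For the first (double) sum, the product $\tau_{i,q}\tau_{j,l}$ is nonzero only when $q = \zh_i$ and $l = \zh_j$, in which case it equals $1$; hence $\sum_{i\neq j}\sum_{q,l} b_{ij}(q,l)\tau_{i,q}\tau_{j,l} = \sum_{i\neq j} b_{ij}(\zh_i,\zh_j)$, which is exactly $\L_1(\Xn;\znh,\pi)$ by \eqref{def.log-likelihood.L1}. For the entropy term, note that for each $i$ exactly one of the $\tau_{i,q}$ equals $1$ (namely $q = \zh_i$) and the rest vanish; using the convention $0\log 0 = 0$ one gets $\sum_{q}\tau_{i,q}\log\tau_{i,q} = 1\cdot\log 1 = 0$, while $\sum_{q}\tau_{i,q}\log\alpha_q = \log\alpha_{\zh_i}$. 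Therefore $-\sum_{i,q}\tau_{i,q}(\log\tau_{i,q} - \log\alpha_q) = \sum_{i=1}^n \log\alpha_{\zh_i}$. Adding the two contributions gives $\J(\Xn;\delta_{\znh},\alpha,\pi) = \L_1(\Xn;\znh,\pi) + \sum_{i=1}^n \log\alpha_{\zh_i}$, which is the claimed identity.

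There is essentially no obstacle here: the only mild care needed is that $\delta_{\znh}$ genuinely lies in $\mathcal{S}_n$ (it does, since each $\tau_i$ is a vertex of the simplex) so that formula \eqref{exp.variatonal.functional} applies, and that the degenerate entropy term is handled with the standard convention $0\log 0 = 0$ — which is harmless because the term $-\sum_{i,q}\tau_{i,q}\log\tau_{i,q}$ is the entropy of $\prod_i \M(1,\tau_{i,\cdot})$ and vanishes for a product of Dirac masses. I would also remark that the statement of the lemma writes $\znh = \argmin_{\zn}\L_1$ whereas elsewhere (e.g. Lemma~\ref{lem.unif.conv.var}) it is $\argmax$; the identity above holds for \emph{any} fixed label vector $\zn$ in place of $\znh$, so the discrepancy is immaterial for the proof.
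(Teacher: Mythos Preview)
Your proof is correct. You take the explicit expression \eqref{exp.variatonal.functional} for $\J$ as given and simply evaluate it at the Dirac parameters $\tau_{i,q}=\1_{(\zh_i=q)}$; the paper instead starts from the defining relation $\J=\L_2-K(D_{\taun},P^{\Xn})$, expands the Kullback--Leibler term, and \emph{re-derives} formula \eqref{exp.variatonal.functional} before performing the same substitution. The only caveat is that the paper explicitly cites ``the proof of Lemma~\ref{lem.Daudin.et.al}'' as one of the sources for \eqref{exp.variatonal.functional}, so quoting it as input is mildly circular; however, since the formula is also attributed to \citet{DPR}, your shortcut is legitimate. Your remark that the identity holds for any fixed label vector (so the $\argmin$/$\argmax$ typo is immaterial) is apt, and the $0\log 0=0$ convention is exactly what is needed.
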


\medskip

\begin{proof}[Proof of Lemma~\ref{lem.Daudin.et.al}]
  First, let us recall Eq.~\eqref{eq.variational.approx}
\begin{align*}
     \J(\Xn; \taun, \alpha, \pi) & = \L_2(\Xn;  \alpha, \pi) - K\paren{D_{\taun},P^{\Xn}} \\
      & = \log \croch{f(\Xn; \alpha, \pi ) } - K\paren{D_{\taun},P^{\Xn}}\enspace,
\end{align*}
where $(\alpha,\pi) \mapsto f(\Xn; \alpha, \pi )$ denotes the likelihood of $(\alpha,\pi)$.

Second, Eq.~\eqref{def.product.distribution} and simple calculations lead to
\begin{align*}
&  K\paren{D_{\taun},P^{\Xn}} \\
& = \sum_{\zn} D_{\taun}(\zn) \log D_{\taun}(\zn)-\sum_{\zn} D_{\taun}(\zn) \log \P\paren{\Zn=\zn \mid \Xn}\\
      & = \sum_{iq} \tau_{i,q} \log \tau_{i,q}-\sum_{\zn} D_{\taun}(\zn) \log \paren{ \frac{ f(\Xn, \zn; \alpha, \pi ) }{ f(\Xn; \alpha, \pi ) } } \enspace,
\end{align*}
where $(\alpha, \pi ) \mapsto f(\Xn, \zn; \alpha, \pi )$ denotes the complete-likelihood of $(\alpha, \pi )$.
Then, 
\begin{align*}
& K\paren{D_{\taun},P^{\Xn}} - \sum_{i,q} \tau_{i,q} \log \tau_{i,q} \\
= &  -\sum_{\zn} D_{\taun}(\zn) \log f(\Xn; \zn, \pi ) \\
& + \sum_{\zn} D_{\taun}(\zn) \log f(\zn; \alpha)+\log f(\Xn; \alpha, \pi ) \enspace,
\end{align*}
where $\alpha \mapsto f(\zn; \alpha) = \prod_{i=1}^n \paren{ \sum_{q=1}^Q\alpha_q^{z_i} }$.
Hence,
\begin{align*}
& K\paren{D_{\taun},P^{\Xn}}  - \sum_{i,q} \tau_{i,q} \log \tau_{i,q} \\
       & = - \sum_{i  \neq  j} \sum_{q,l} \croch{ X_{i,j}\log \pi_{q,l}+ (1-X_{i,j})\log(1-\pi_{q,l}) } \tau_{i,q} \tau_{j,l} \\
& + \sum_{i,q}\tau_{i,q}\log \alpha_q+ \L_2(\Xn;  \alpha, \pi) \enspace.
\end{align*}
Therefore for every $\taun$,
\begin{align*}
& \J(\Xn; \taun, \alpha, \pi) \\
& = \sum_{i  \neq  j} \sum_{q,l} [X_{i,j}\log \pi_{q,l}+ (1-X_{i,j})\log(1-\pi_{q,l})] \tau_{i,q} \tau_{j,l} - \sum_{iq} \tau_{i,q} \paren{\log \tau_{i,q} -  \log\alpha_{q}} \enspace.
\end{align*}
With $\taun = \znh$, it comes
$\J(\Xn; \znh, \alpha, \pi)= \L_1(\Xn; \znh, \pi) +\sum_{i=1}^n \log\alpha_{\widehat{z}_i}$, which concludes the proof.
\end{proof}

\bigskip

\begin{lem}\label{lem.aposteriori.new.param}
\begin{align*}
    \abs{ D_{\widetilde{\tau}_{[n]}}(\zn^*) - \widetilde{P}(\zn^*)} \leq \sqrt{-\frac{1}{2} \log\croch{ \widetilde{P}(\zn^*)}}\enspace.
\end{align*}

\end{lem}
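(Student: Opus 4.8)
The plan is to use two ingredients: first, that $D_{\widetilde\tau_{[n]}}$ is precisely the Kullback--Leibler projection of $\widetilde P$ onto the set $\mathcal{D}_n$ of product multinomial distributions, so that the Dirac mass $\delta_{\zn^*}$ may be used as a competitor; second, Pinsker's inequality, which turns the resulting bound on a Kullback--Leibler divergence into a bound on total variation, hence on the difference of the probabilities assigned to $\zn^*$.

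\textbf{Main steps.} First I would observe that, by the definition of the variational estimators in~\eqref{def.tau.var}, $\widetilde\tau_{[n]} = \tahn(\widetilde\alpha,\widetilde\pi)$ maximizes $\taun\mapsto \J(\Xn;\taun,\widetilde\alpha,\widetilde\pi)$; by~\eqref{eq.variational.approx} this is the same as minimizing $\taun\mapsto K(D_{\taun},\widetilde P)$ over $\mathcal{S}_n$, where $\widetilde P = \P_{\widetilde\alpha,\widetilde\pi}\paren{\Zn=\cdot\mid\Xn}$. In other words $D_{\widetilde\tau_{[n]}}$ minimizes $D\mapsto K(D,\widetilde P)$ over $\mathcal{D}_n$. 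Since $\delta_{\zn^*}=\prod_{i=1}^n\delta_{z^*_i}$ is a (degenerate) product multinomial it belongs to $\mathcal{D}_n$, so
\begin{align*}
K\paren{D_{\widetilde\tau_{[n]}},\widetilde P} \leq K\paren{\delta_{\zn^*},\widetilde P} = -\log\croch{\widetilde P(\zn^*)}\enspace,
\end{align*}
the last equality being the elementary computation already performed in the proof of Proposition~\ref{prop.kullback.convergence.zero} (with $\widetilde P$ in place of $P^{\Xn}$). Then I would invoke Pinsker's inequality, $\norm{Q-\widetilde P}_{\mathrm{TV}} \leq \sqrt{K(Q,\widetilde P)/2}$ for any probability measure $Q$ on the countable set of label configurations, together with $\abs{Q(\zn^*)-\widetilde P(\zn^*)} \leq \norm{Q-\widetilde P}_{\mathrm{TV}}$ applied to the singleton event $\acc{\zn^*}$. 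Taking $Q=D_{\widetilde\tau_{[n]}}$ and chaining with the previous display gives
\begin{align*}
\abs{ D_{\widetilde\tau_{[n]}}(\zn^*) - \widetilde P(\zn^*) } \leq \sqrt{\tfrac12\, K\paren{D_{\widetilde\tau_{[n]}},\widetilde P}} \leq \sqrt{-\tfrac12\log\croch{\widetilde P(\zn^*)}}\enspace,
\end{align*}
which is the asserted bound.

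\textbf{Main obstacle.} There is essentially no analytic difficulty here; the only point requiring care is the bookkeeping that the variational maximization over $\taun$ is literally Kullback--Leibler minimization over $\mathcal{D}_n$ against the \emph{posterior with the estimated parameters} $\widetilde P$ (not $P^{\Xn}$), so that $\delta_{\zn^*}\in\mathcal{D}_n$ is an admissible competitor and Pinsker's inequality is applied with $\widetilde P$ as the reference measure. Once this is set up, the computation $K(\delta_{\zn^*},\widetilde P)=-\log\widetilde P(\zn^*)$ and Pinsker's inequality finish the argument.
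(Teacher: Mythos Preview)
Your proposal is correct and follows essentially the same route as the paper: bound the pointwise difference by total variation, apply Pinsker's inequality to pass to the Kullback--Leibler divergence $K(D_{\widetilde\tau_{[n]}},\widetilde P)$, and then use that $D_{\widetilde\tau_{[n]}}$ minimizes this divergence over $\mathcal{D}_n$ to replace it by $K(\delta_{\zn^*},\widetilde P)=-\log\widetilde P(\zn^*)$. Your explicit identification of the point that the variational optimization is against $\widetilde P$ rather than $P^{\Xn}$ is exactly the right bookkeeping.
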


\medskip

\begin{proof}[Proof of Lemma~\ref{lem.aposteriori.new.param}]
\begin{align*}
    \abs{ D_{\widetilde{\tau}_{[n]}}(\zn^*) - \widetilde{P}(\zn^*)} & \leq \norm{ D_{\widetilde{\tau}_{[n]}} - \widetilde{P} }_{TV}\\
& \leq \sqrt{\frac{1}{2} K\paren{D_{\widetilde{\tau}_{[n]}}, \widetilde{P}}}\\
& \leq \sqrt{\frac{1}{2} K\paren{ \delta_{\zn^*}, \widetilde{P}}} = \sqrt{-\frac{1}{2} \log\croch{ \widetilde{P}(\zn^*) } } \enspace.
\end{align*}

\end{proof}


\newpage


\bibliographystyle{natbib}

\bibliography{biblio}

\end{document}